\newtheorem{thm}{Theorem}[section]
\newtheorem{prp}[thm]{Proposition}
\newtheorem{lem}[thm]{Lemma}
\newtheorem{cor}[thm]{Corollary}
\newtheorem*{con*}{Conjecture 2}
\theoremstyle{definition}
\newtheorem{dfn}[thm]{Definition}
\newtheorem{rmk}[thm]{Remark}
\numberwithin{equation}{section}
\newcommand{\T}{{\mathbb T}}
\newcommand{\Z}{{\mathbb Z}}
\newcommand{\R}{{\mathbb R}}
\newcommand{\N}{{\mathbb N}}
\newcommand{\pt}{{\mathrm{pt}}}
\newcommand{\om}{\mathrm{\omega}}
\newcommand{\Vol}{\mathrm{Vol}}
\newcommand{\id}{\mathrm{id}}
\newcommand{\di}{{\mathrm d}}
\newcommand{\Fvol}{{\mathfrak{Vol}}}
\newcommand{\ta}{{\mathrm T}}
\newcommand{\dR}{{\mathrm{dR}}}
\newcommand{\area}{{\mathrm{area}}}
\newcommand{\avg}{{\mathrm{avg}}}
\newcommand{\norm}{{\mathrm{norm}}}
\newcommand{\can}{{\mathrm{can}}}
\newcommand{\p}{\partial}
\newcommand{\x}{\times}
\newcommand{\beq}{\begin{equation}}
\newcommand{\beqn}{\begin{equation}\nonumber}
\newcommand{\eeq}{\end{equation}}
\newcommand{\bea}{\begin{equation}\begin{aligned}}
\newcommand{\bean}{\begin{equation}\begin{aligned}\nonumber}
\newcommand{\eea}{\end{aligned}\end{equation}}
\noindent\textsc{Seoul National University, Department of Mathematical Sciences, Research Institute in Mathematics, Gwanak-Gu, 
	Seoul 08826, South Korea} \par  
\title{On a systolic inequality for closed magnetic geodesics on surfaces}
\author{Gabriele Benedetti and Jungsoo Kang}
\begin{document}
\maketitle
\begin{abstract}
We apply a local systolic-diastolic inequality for contact forms and odd-symplectic forms on three-manifolds to bound the magnetic length of closed curves with prescribed geodesic curvature (also known as magnetic geodesics) on an oriented closed surface. Our results hold when the prescribed curvature is either close to a Zoll one or large enough.
\end{abstract}
\tableofcontents

\section{Introduction}
In this paper, we apply the systolic-diastolic inequality established in \cite{BK19a,BK19} for contact forms and odd-symplectic forms on three-manifolds, respectively, to the study of immersed closed curves with prescribed geodesic curvature on a connected oriented closed surface $(M,\mathfrak o_M)$ endowed with a Riemannian metric $g$. The Riemannian metric $g$ and the orientation $\mathfrak o_M$ yield a well-defined way of measuring angles in each tangent plane and an area form $\mu$ on $M$. If $c:I\to M$ is a smooth curve parametrised by arc-length on some interval $I$, we define its geodesic curvature $\kappa_c:I\to\R$ to be the unique function satisfying the relation
\[\label{e:kappac}
\nabla_{\dot c}\dot c=\kappa_c\dot c^\perp,
\]
where $\nabla$ is the Levi-Civita connection, and $\dot c^\perp$ is the unit vector with the property that the angle from $\dot{c}$ to $\dot{c}^\perp$ is $\tfrac\pi2$.

Let $f:M\to\R$ be a smooth function. A curve $c:\R\to M$ is said to be a \textbf{magnetic geodesic}, or an \textbf{$f$-magnetic geodesics} when we want to mention the function $f$ explicitly, if it is parametrised by arc-length and satisfies the equation
\begin{equation}\label{e:kappa}
\kappa_{c}(t)=-f(c(t)),\qquad\forall\, t\in \R.
\end{equation}
The magnetic geodesics of $f$ and of $-f$ are in one-to-one correspondence through time reversal. This means that $t\mapsto c(t)$ is an $f$-magnetic geodesic if and only if $t\mapsto c(-t)$ is a $-f$-magnetic geodesic. The study of periodic solutions of \eqref{e:kappa}, which we refer to as closed $f$-magnetic geodesics, is by now a problem with a rich history and we refer the reader to \cite{Tai92,Gin96b,CMP04,Ben16b,AB16} and the references therein for an account of the most remarkable developments and a generalization to higher dimensional manifolds $M$.\\ [-2ex]

A crucial ingredient in our work will be to use that the tangent lifts $(c,\dot c)$ of $f$-magnetic geodesics are the integral curves of a vector field $X_f$ defined on the unit tangent bundle $\ta^1M$, whose elements are the tangent vectors of unit norm. The foot-point projection 
\[
\mathfrak p_\infty:\ta^1M\to M
\] 
is an orientable $S^1$-bundle, whose fibres we orient by the $\mathfrak o_M$-{\it negative} direction. If $e\in H^2_\dR(M)$ is {\it minus} the real Euler class of $\mathfrak p_\infty$, then
\begin{equation}\label{e:state}
\langle e,[M]\rangle=\chi(M),
\end{equation}
where $\chi(M)$ is the Euler characteristic of $M$. We write $\mathfrak h_\infty\in[S^1,\ta^1M]$ for the free-homotopy class of $\mathfrak p_\infty$-fibres. We put the orientation $\mathfrak o_{\ta^1 M}$ on $\ta^1M$ combining the orientation $\mathfrak o_M$ on $M$ with the orientation of the $\mathfrak p_\infty$-fibres given above.

\begin{dfn}\label{def:Zoll_function}
We say that a function $f:M\to \R$ is {\bf Zoll} with respect to a given metric $g$ if there exists an oriented $S^1$-bundle
\[
\mathfrak p_f:\ta^1M\to M_f
\]
such that the integral curves of $X_f$ are fibres of $\mathfrak p_f$. We write $e_f$ for minus the Euler class of $\mathfrak p_f$, and $\mathfrak h_f$ for the free-homotopy class of the $\mathfrak p_f$-fibres. 
\end{dfn}
If we take $f\equiv0$, we recover the notion of Zoll Riemannian metric and $M$ must be the two-sphere. We refer the reader to \cite{Bes78} for a thorough discussion of such metrics. A classical example of a Zoll function $f_*:M\to\R$ can be given when $g=g_*$ is a metric of constant Gaussian curvature $K_*$. We take $f_*$ to be any constant function satisfying
\begin{equation}\label{e:f*K}
f_*^2+K_*>0.
\end{equation}
If $c$ is a prime closed magnetic geodesic, its lift $\widetilde c$ to the universal cover $\widetilde M$ of $M$ parametrises the boundary of a geodesic ball of radius
\[
R=\begin{cases}
\frac{1}{\sqrt{K_*}}\arctan\Big (\frac{\sqrt{K_*}}{|f_*|}\Big),&\text{if }K_*>0;\\
\frac{1}{|f_*|},&\text{if }K_*=0;\\
\frac{1}{\sqrt{-K_*}}\mathrm{arctanh}\Big(\frac{\sqrt{-K_*}}{|f_*|}\Big),&\text{if }K_*<0. 
\end{cases}
\]
According to our sign convention, the curve rotates clockwise, if $f_*>0$. Therefore, all $f_*$-magnetic geodesics are closed, and actually $f_*$ is Zoll. Here the map $\mathfrak p_{f_*}:\ta^1M\to M_{f_*}=M$ in Definition \ref{def:Zoll_function} associates to a tangent vector the projection on $M$ of the center of the corresponding ball in $\widetilde M$. In general, it is unknown whether every Riemannian metric admits a Zoll function.\\[-2ex]
\begin{rmk}\label{r:noorbits}
If we take $f_*=0$ and $K_*=0$ for the two-torus or $f_*^2+K_*<0$ for higher genus surfaces, all the closed magnetic geodesics are not contractible. If we take $f_*^2+K_*=0$ on higher genus surfaces, then there are no closed magnetic geodesics at all.
\end{rmk}
Let us go back to the case of an arbitrary function $f:M\to\R$ and attach two quantities to it. The former is the \textbf{average} of $f$:
\begin{equation*}
f_\avg:=\frac{1}{\area(M)}\int_M f\mu\,,\qquad\qquad\area(M):=\int_M\mu\,.
\end{equation*}
The latter is the \textbf{average curvature} of $f$, which generalises the left-hand side of \eqref{e:f*K}:
\begin{equation*}
K_f:=(f_\avg)^2+\frac{2\pi\chi(M)}{\area(M)}.
\end{equation*}
Indeed, by the Gauss-Bonnet theorem,
\[
\frac{2\pi\chi(M)}{\area(M)}=\frac{1}{\area(M)}\int_M K\mu,
\]
where $K$ is the Gaussian curvature of $g$. The average curvature of $f$ is always positive for the two-sphere $M=S^2$. For the two-torus $M=\T^2$, it is always non-negative and equality holds exactly when $f_\avg=0$.

In the next proposition, we collect the first properties of Zoll functions and their closed magnetic geodesics.
\begin{prp}\label{p:magzollintro}
If $f:M\to \R$ is a Zoll function, the following statements hold:
\begin{enumerate}[(a)]
\item The surface $M_f$ is diffeomorphic to $M$ and there is a path of oriented $S^1$-bundles $\{\mathfrak p_r \}_{r\in[0,1]}$ with total space $\ta^1 M$ and a sign $\epsilon(f)\in\{-1,+1\}$ such that
\[
\mathfrak p_0=\epsilon(f)\mathfrak p_\infty,\qquad \mathfrak p_1=\mathfrak p_f
\]
where $-\mathfrak p_\infty$ is the bundle $\mathfrak p_\infty$ with opposite orientation. In particular,
\begin{equation*}
\mathfrak h_f=\epsilon(f)\mathfrak h_\infty,\qquad\langle e_f,[M_f]\rangle=\chi(M).
\end{equation*}
\item If $M$ is not the two-sphere, then $f_\avg\neq0$ and $\epsilon(f)=\mathrm{sign}(f_\avg)$.
\item The average curvature is positive, namely $K_f>0$.
\end{enumerate}
\end{prp}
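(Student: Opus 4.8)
The plan is to transfer everything to the total space $\ta^1 M$, where the tangent lifts of the $f$-magnetic geodesics are the orbits of a nowhere-vanishing vector field $X_f$. First I would record the relevant structure: splitting the tangent bundle $\ta(\ta^1 M)$ into horizontal and vertical parts via the Levi--Civita connection, one has $X_f=H(\tau)+fV_\infty$, where $H(\tau)$ is the horizontal lift of the tautological unit vector and $V_\infty$ generates the fibre rotation in the $\mathfrak h_\infty$-direction; hence $\{X_{tf}\}_{t\in[0,1]}$ is a homotopy through nowhere-vanishing vector fields from the geodesic field $X_0$ to $X_f$. Dually, the closed $2$-form $\omega_f:=\di\lambda_\can+\mathfrak p_\infty^*(f\mu)$ on $\ta^1 M$, with $\lambda_\can$ the restriction of the Liouville form, has rank exactly $2$ everywhere, $\ker\omega_f=\R X_f$, and $[\omega_f]=\mathfrak p_\infty^*[f\mu]$ in $\H^2_\dR(\ta^1 M)$; when $f$ is Zoll it is $\mathfrak p_f$-basic and descends to a nowhere-vanishing $2$-form $\sigma_f$ on $M_f$ with $\mathfrak p_f^*\sigma_f=\omega_f$.

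For (a) I would first note that $M_f$ is a closed orientable surface, being a compact quotient that is the base of an oriented $S^1$-bundle with orientable total space. Reparametrising $X_f$ to period $2\pi$ makes it the generator of a free $S^1$-action with quotient $M_f$. All $\mathfrak p_f$-fibres are freely homotopic, and by transporting the computation along $\{X_{tf}\}$ and checking it against the constant-curvature model one identifies this class as $\epsilon(f)\mathfrak h_\infty$ for a sign $\epsilon(f)\in\{\pm1\}$; in particular $\mathfrak p_{\infty*}\mathfrak h_f=1$ in $\pi_1(M)$, so every closed magnetic geodesic of a Zoll function is contractible in $M$. From $\mathfrak h_f=\epsilon(f)\mathfrak h_\infty$ and the homotopy sequences of the two bundles, $\pi_1(M_f)\cong\pi_1(M)$, hence $M_f\cong M$. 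Then $\mathfrak p_f$ and $\epsilon(f)\mathfrak p_\infty$ are oriented $S^1$-bundles over diffeomorphic bases with the same Euler number (pinned down by the total space and the fibre class), so they are isomorphic; the path $\{\mathfrak p_r\}$ is then produced by joining the two free $S^1$-actions inside the connected family of circle fibrations of $\ta^1 M$ with fibre class $\epsilon(f)\mathfrak h_\infty$ (a rigidity property of the Seifert fibration of $\ta^1 M$). Finally, $\mathfrak h_f$ is constant along the path and the Euler number is a homotopy invariant, giving $\langle e_f,[M_f]\rangle=\langle e,[M]\rangle=\chi(M)$, once one accounts for the fact that reversing the fibre orientation when $\epsilon(f)=-1$ also reverses the base orientation.

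For (b) and (c) I would split by the sign of $\chi(M)$; note first that the case $M=S^2$ of (c) is immediate since $\chi(S^2)>0$. When $M=\T^2$: if $f_\avg=0$ then $[f\mu]=0$, so $\omega_f=\mathfrak p_f^*\sigma_f$ is exact and $[\sigma_f]$ lies in $\ker(\mathfrak p_f^*\colon\H^2_\dR(M_f)\to\H^2_\dR(\ta^1 M))=\R\cdot e(\mathfrak p_f)=0$; hence $\int_{M_f}\sigma_f=0$, contradicting that $\sigma_f$ is nowhere vanishing on the connected surface $M_f$. So $f_\avg\neq0$ and $K_f=f_\avg^2>0$, and the sign statement $\epsilon(f)=\mathrm{sign}(f_\avg)$ comes from pairing $[\omega_f]=f_\avg\,\mathfrak p_\infty^*[\mu]=\mathfrak p_f^*[\sigma_f]$ with the $2$-cycle carried by a section of $\mathfrak p_\infty$: one side is $f_\avg$ times a positive number, the other is $\epsilon(f)\int_{M_f}\sigma_f$, which is positive because $\int_{M_f}\ell\,\sigma_f=\int_{\ta^1 M}\lambda_\can\wedge\omega_f=2\pi\area(M)>0$ (with $\ell>0$ the magnetic length of the fibre) forces $\sigma_f$ to be a positive area form.

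The hard case is $\chi(M)<0$; set $\bar K:=\tfrac{2\pi\chi(M)}{\area(M)}<0$. Here $[f\mu]$ is automatically a multiple of the Euler class of $\mathfrak p_\infty$, so $\omega_f$ is exact, and since $\mathfrak h_f$ is $\Z$-torsion in $\H_1(\ta^1 M;\Z)$ the action $A_0=\int_\gamma\lambda$ of the magnetic geodesics (for a primitive $\lambda$ of $\omega_f$) is well defined and orbit-independent. Applying $\int_{\ta^1 M}(\,\cdot\,)\wedge\omega_f$ and Fubini over $\mathfrak p_f$ to $\lambda_\can$, to the Levi--Civita connection form $\psi$, and to a connection form for $\mathfrak p_f$ gives $\int_{M_f}\ell\,\sigma_f=2\pi\area(M)$ (hence $\sigma_f>0$ and $a_f:=\int_{M_f}\sigma_f>0$), $\int_{M_f}(\int_c f\,\di s)\,\sigma_f=2\pi\area(M)f_\avg$, and $a_f=-\chi(M)A_0$ (hence $A_0>0$); together with an explicit primitive these combine into $A_0=\tfrac{\area(M)}{|\chi(M)|}\sqrt{f_\avg^2+|\bar K|}$, so that $f_\avg\neq0$ is equivalent to $A_0^2>\tfrac{2\pi\area(M)}{|\chi(M)|}$ and $K_f>0$ to $A_0^2>\tfrac{4\pi\area(M)}{|\chi(M)|}$. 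Replacing $f$ by $-f$ if needed, I may assume $\epsilon(f)=-1$, so by (a) each magnetic geodesic $c$ is a contractible curve of turning number $+1$, and Gauss--Bonnet gives $\int_c f\,\di s=2\pi-\int_{D_c}K\mu$ for an immersed filling disk $D_c$; substituting reduces both inequalities to an upper bound for the averaged curvature flux $\int_{M_f}(\int_{D_c}K\mu)\,\sigma_f$. \textbf{Establishing this bound is the main obstacle:} a magnetic geodesic need not be embedded --- and genuinely is not near the critical value, as the constant-curvature model shows --- so the filling disks carry folds, and controlling their curvature flux requires a finer analysis of the Zoll foliation rather than elementary integral geometry. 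Once that bound is in hand, tracking signs also yields $\epsilon(f)=\mathrm{sign}(f_\avg)$, completing (b) and (c).
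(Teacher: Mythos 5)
Your part (a) and the torus case of (b)--(c) are essentially in line with the paper: for $\chi(M)\neq 0$ the identification $\mathfrak h_f=\pm\mathfrak h_\infty$ and the path $\{\mathfrak p_r\}$ do come from the rigidity of oriented $S^1$-bundle structures on $\ta^1M$ (the paper quotes \cite[Proposition 1.2]{BK19a} for exactly this), and for $M=\T^2$ your pairing of $[\Omega_f]=f_\avg\,\mathfrak p_\infty^*[\mu]=\mathfrak p_f^*[\sigma_f]$ against the primitive fibre classes is the paper's Poincar\'e-duality argument. One caveat: for $\T^2$ you cannot appeal to Seifert rigidity (the three-torus carries many non-isotopic circle fibrations), so the cohomological computation you defer to (b) is already needed in (a) to single out $\pm\mathfrak h_\infty$ among all primitive classes; this is fixable but should be said.

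The genuine gap is the case $\chi(M)<0$ of (b) and (c), and you have located it yourself: the "upper bound for the averaged curvature flux $\int_{M_f}(\int_{D_c}K\mu)\,\sigma_f$" is never established, and no elementary integral-geometric argument is offered for it, so for higher-genus surfaces the proof is incomplete. The paper's route is different in both halves. For (c), positivity of $K_f$ falls out of the volume--action identity for weakly Zoll pairs, $P(\mathcal A)=\Fvol$ with $P(A)=\tfrac{\chi(M)}{2}A^2$, which gives $K_f=\big(\chi(M)\mathcal A/\area(M)\big)^2>0$ since $\tfrac{\di P}{\di A}(\mathcal A)=\langle[\om_f],[M_f]\rangle\neq0$ (Corollary \ref{c:sign}); note that this identity reads $\mathcal A^2=\tfrac{\area(M)^2}{\chi(M)^2}\big(f_\avg^2+\bar K\big)$ with $\bar K<0$, so your formula $A_0=\tfrac{\area(M)}{|\chi(M)|}\sqrt{f_\avg^2+|\bar K|}$ carries a sign error and the two inequalities you claim to be equivalent to $f_\avg\neq0$ and $K_f>0$ are not the right reductions. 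For (b), which is the genuinely hard point, the paper argues by contradiction: assuming $f_\avg<0$, the already-proved systolic equality (Theorem \ref{t:mag2}) forces $\ell_f(c)=\bar\ell(f)<0$ for every prime orbit; stability of the energy level $\{H_0=\tfrac12\}$ then spreads negative free-period Lagrangian action over a whole interval of energies, contradicting the existence, for almost every energy between $c_*(L)$ and the Ma\~n\'e critical value $c_u(L)$ of the universal cover, of a contractible periodic orbit with positive action (\cite{M10,AB16}). Nothing in your Gauss--Bonnet set-up substitutes for that input.
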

\begin{rmk}
If $M$ is the two-sphere, the oriented bundle $\mathfrak p_\infty$ is homotopic to its opposite $-\mathfrak p_\infty$. Therefore, both $\epsilon(f)=-1$ and $\epsilon(f)=+1$ are good in this case.
\end{rmk}
In view of this proposition, given a function $f:M\to\R$ which is not necessarily Zoll, we are motivated to look for closed $f$-magnetic geodesics whose tangent lift belongs either to the free-homotopy class $\mathfrak h_\infty$ or to $-\mathfrak h_\infty$. Up to substituting $f$ with $-f$, we focus on $f$-magnetic geodesics in the former class of curves, namely in the set
\begin{equation}
\Lambda(M;\mathfrak h_\infty):=\Big\{ c:\R/T\Z\to M\ \text{for some }T>0\ \Big|\ |\dot c|\equiv 1,\  [(c,\dot c)]=\mathfrak h_\infty\Big\}.
\end{equation}
Lemma \ref{l:turn} and Lemma \ref{l:homC} explain in more detail which curves belong to $\Lambda(M; \mathfrak h_\infty)$. We denote by $\Lambda(f;\mathfrak h_\infty)$ the subset of closed $f$-magnetic geodesics in $\Lambda(M; \mathfrak h_\infty)$. 

We now describe a function $\ell_{f}:\Lambda(M; \mathfrak h_\infty)\to \R$ called \textbf{$f$-magnetic length functional}, whose critical set is exactly $\Lambda(f;\mathfrak h_\infty)$. To this purpose, let $c\in\Lambda(M;\mathfrak h_\infty)$. There exists a cylinder $\Gamma:[0,1]\times S^1\to\ta^1M$ such that $\Gamma(0,\cdot)$ is an oriented $\mathfrak p_\infty$-fibre and $\Gamma(1,\cdot)$ coincides with $(c,\dot c)$, up to reparametrisation. We regard the projection $\mathfrak p_\infty\circ\Gamma$ as a disc $C:D^2\to M$ bounding $c$. Any disc arising in this way is called an {\bf admissible capping disc} for $c$. We set
\[
\ell_{f}:\Lambda(M; \mathfrak h_\infty)\to \R,\qquad \ell_{f}(c) := \ell(c)+ \int_{D^2}C^*(f\mu),
\]
where $\ell(c)$ is the Riemannian length of $c$, and $C$ is an admissible capping disc for $c$. As will be shown in Section \ref{ss:unit}, the value of $\ell(c)$ is independent of the choice of $C$. The systolic-diastolic inequality will give bounds for the quantities
\[
\ell_{\min}(f):=\inf_{\substack{c\,\in\,\Lambda(f;\mathfrak h_\infty)\\ c\; \text{prime}}}\ell_{f}(c),\qquad \ell_{\max}(f):=\sup_{\substack{c\,\in\,\Lambda(f;\mathfrak h_\infty)\\ c\; \text{prime}}}\ell_{f}(c),
\]
in terms of the \textbf{average length} of $f$ which is defined by
\begin{equation}\label{e:al}
\bar\ell(f):=\frac{2\pi}{f_\avg+\sqrt{K_f}}.
\end{equation}
\begin{rmk}\label{r:ellbar}
For $M=S^2$, we automatically have $\bar{\ell}(f)>0$. If $M=\T^2$, then $\bar{\ell}(f)$ is a real number if and only if $f_\avg> 0$ and in this case $\bar{\ell}(f)=\pi/f_\avg>0$. If $M$ has higher genus, $\bar{\ell}(f)$ is a real number, if and only if $K_f\geq 0$. In this case, $f_\avg$ and $\bar{\ell}(f)$ are both non-zero and have the same sign.
\end{rmk}
\begin{dfn}\label{d:magsys}
We say that $f:M\to\R$ satisfies the \textbf{magnetic systolic-diastolic inequality} if $\bar{\ell}(f)$ is a well-defined real number and
\[
\ell_{\min}(f)\leq \bar\ell(f)\leq \ell_{\max}(f),
\]
with any of the two equalities holding if and only if $f$ is a Zoll function, whose magnetic geodesics lie in $\Lambda(M;\mathfrak h_\infty)$.
\end{dfn}
\begin{rmk}
According to Proposition \ref{p:magzollintro}, if $M\neq S^2$, the magnetic geodesics of a Zoll function $f$ lie in $\Lambda(M;\mathfrak h_\infty)$ if and only if $f_\avg>0$. In this case $K_f$ and $\bar{\ell}(f)$ are also positive.
\end{rmk}
\begin{rmk}\label{rmk:minus_homotopy_class}
One could define \textit{mutatis mutandis} the analogous space $\Lambda(M;-\mathfrak h_\infty)$ and give a corresponding variational principle and a systolic-diastolic inequality for closed $f$-magnetic geodesics contained therein. The only difference is that one has to substitute $\bar{\ell}(f)$ with $\bar{\ell}'(f):=2\pi(-f_\avg+\sqrt{K_f})^{-1}$.
\end{rmk}

We prove the inequality in two cases. First, we show it for functions close to a Zoll one.
\begin{thm}\label{t:mag2}
Let $M$ be a connected oriented closed surface endowed with a Riemannian metric, and let $f_*:M\to\R$ be a Zoll function, whose magnetic geodesics lie in $\Lambda(f;\mathfrak h_\infty)$. Then, there exists a $C^2$-neighbourhood $\mathcal F$ of $f_*$ in the space of functions such that every $f$ in $\mathcal F$ satisfies the magnetic systolic-diastolic inequality.
\end{thm}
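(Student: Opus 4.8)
The plan is to lift everything to the three-manifold $\ta^1M$, where closed $f$-magnetic geodesics in $\Lambda(M;\mathfrak h_\infty)$ become closed orbits, in the free-homotopy class $\mathfrak h_\infty$, of the characteristic vector field of a contact form --- or, when $f$ vanishes somewhere, of an odd-symplectic form --- whose action functional is $\ell_f$, and then to invoke the local systolic-diastolic inequalities of \cite{BK19a,BK19} near the Zoll structure attached to $f_*$. As developed in Section \ref{ss:unit}, to each $f$ one associates on $\ta^1M$ a one-form $\alpha_f$ --- morally $\alpha_f=\theta+\beta_f$, with $\theta$ the contact form of the geodesic flow and $\beta_f$ a primitive of $\mathfrak p_\infty^*(f\mu)$, to be replaced by the closed two-form $\omega_f=d\theta+\mathfrak p_\infty^*(f\mu)$ and its odd-symplectic structure whenever $\alpha_f$ fails to be contact --- whose characteristic vector field is a positive reparametrisation of $X_f$. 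Thus the prime closed orbits of $\alpha_f$ in class $\mathfrak h_\infty$ are exactly the tangent lifts of the prime curves of $\Lambda(f;\mathfrak h_\infty)$, the action of such an orbit equals the value of $\ell_f$ on the underlying magnetic geodesic, and, writing $A_{\min}(\alpha_f)$ and $A_{\max}(\alpha_f)$ for the infimum and the supremum of the action over these orbits, $A_{\min}(\alpha_f)=\ell_{\min}(f)$ and $A_{\max}(\alpha_f)=\ell_{\max}(f)$. The assignment $f\mapsto\alpha_f$ is affine and continuous from $C^2(M)$ into the relevant space of one-forms on $\ta^1M$, because $X_f$ depends on $f$ only through $\|f\|_{C^0}$ and $\beta_f$ can be chosen to depend linearly and continuously on $f$. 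Finally, by Definition \ref{def:Zoll_function} and Proposition \ref{p:magzollintro}, $f$ is a Zoll function whose closed magnetic geodesics lie in $\Lambda(M;\mathfrak h_\infty)$ precisely when $\alpha_f$ is a Zoll contact (resp.\ odd-symplectic) form whose closed orbits form the $S^1$-bundle $\mathfrak p_f$, with $\mathfrak h_f=\epsilon(f)\mathfrak h_\infty=\mathfrak h_\infty$ and $\langle e_f,[M_f]\rangle=\chi(M)$; by hypothesis this is the case for $f=f_*$, which forces $\epsilon(f_*)=+1$.

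Next I apply the local systolic-diastolic inequality. By \cite{BK19a} in the contact case, and \cite{BK19} in the odd-symplectic case, there exist a $C^2$-neighbourhood $\mathcal U$ of $\alpha_{f_*}$ and a continuous functional $\bar A$ on $\mathcal U$, with $\bar A(\alpha_{f_*})$ equal to the common action of the closed orbits of the Zoll form $\alpha_{f_*}$, such that
\[
A_{\min}(\alpha)\ \le\ \bar A(\alpha)\ \le\ A_{\max}(\alpha)\qquad\text{for every }\alpha\in\mathcal U,
\]
with either equality holding if and only if $\alpha$ is Zoll. As being contact (resp.\ odd-symplectic of the relevant type) is a $C^1$-open condition satisfied at $\alpha_{f_*}$, after shrinking $\mathcal U$ every $\alpha\in\mathcal U$ is of the allowed type. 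By the continuity of $f\mapsto\alpha_f$, the set $\mathcal F:=\{f : \alpha_f\in\mathcal U\}$ is a $C^2$-neighbourhood of $f_*$, and the identifications established above turn the displayed inequality into
\[
\ell_{\min}(f)\ \le\ \bar A(\alpha_f)\ \le\ \ell_{\max}(f),\qquad f\in\mathcal F,
\]
with either equality if and only if $\alpha_f$ is Zoll. By the equivalence recalled above, a Zoll $\alpha_f$ means that $f$ is a Zoll function; and since such an $\alpha_f$ is $C^2$-close to $\alpha_{f_*}$, its orbit $S^1$-bundle is homotopic to $\mathfrak p_{f_*}$, so $\mathfrak h_f=\mathfrak h_\infty$ and the magnetic geodesics of $f$ lie in $\Lambda(M;\mathfrak h_\infty)$. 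This is exactly the magnetic systolic-diastolic inequality of Definition \ref{d:magsys}, provided one knows that $\bar A(\alpha_f)=\bar\ell(f)$.

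This last identity is where I expect the real work to lie. Both sides are continuous on $\mathcal F$ and they agree at $f_*$: $\bar A(\alpha_{f_*})$ is the action of any prime closed orbit of $\alpha_{f_*}$, i.e.\ the common value of $\ell_{f_*}$ on the prime $f_*$-magnetic geodesics, and this value is $\bar\ell(f_*)$ --- for a metric of constant Gaussian curvature one checks this by evaluating $\ell_{f_*}(c)=\ell(c)+\int_{D^2}C^*(f_*\mu)$ on an admissible capping disc and invoking the Gauss-Bonnet theorem, and in general it comes out of the analysis behind Proposition \ref{p:magzollintro}. To promote this to an identity throughout $\mathcal F$ I would use the explicit description of $\bar A$ supplied by \cite{BK19a,BK19}: it is the relevant root $T$ of an equation of the form
\[
\langle e_{f_*},[M_{f_*}]\rangle\,T^2\ =\ (\text{volume-type integrals over }\ta^1M\text{ of closed forms built from }\alpha_f),
\]
in which $\langle e_{f_*},[M_{f_*}]\rangle=\chi(M)$ is the Euler number of the orbit bundle of the reference Zoll structure. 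Evaluating these integrals by means of the structure equations of the unit tangent bundle and the identities $\int_M K\mu=2\pi\chi(M)$ and $\int_M f\mu=f_{\avg}\,\area(M)$ should show that $T=\bar A(\alpha_f)$ solves
\[
\chi(M)\,T^2+2f_{\avg}\,\area(M)\,T-2\pi\,\area(M)=0,
\]
whose relevant root is $T=2\pi/(f_{\avg}+\sqrt{K_f})=\bar\ell(f)$. This computation --- and, above all, the care it requires with the orientation conventions, in particular the $\mathfrak o_M$-negative orientation of the $\mathfrak p_\infty$-fibres and the attendant sign in $\langle e_f,[M_f]\rangle=\chi(M)$ --- is the step I expect to be the main obstacle. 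By comparison, verifying that the $C^2$-topology on $f$ controls the topology on $\alpha_f$ demanded by \cite{BK19a,BK19} is routine, since $X_f$ feels only $\|f\|_{C^0}$ and $\beta_f$ depends linearly on $f$, so no derivatives are lost.
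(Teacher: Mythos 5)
Your proposal follows essentially the same route as the paper: lift to $\ta^1M$, apply the local systolic--diastolic inequality of \cite{BK19a,BK19} to $\Omega_f$ near the Zoll form $\Omega_{f_*}$, and convert volume and action into $K_f$ and $\ell_f$ --- and the quadratic you predict, $\chi(M)T^2+2f_\avg\area(M)T-2\pi\area(M)=0$, is exactly what Lemma \ref{l:actvolchineq0} together with the Zoll polynomial \eqref{e:polneqt} produces (degenerating to the linear equation $T=\pi/f_\avg$ when $M=\T^2$), with relevant root $\bar\ell(f)$. The one deferred step that is more than bookkeeping is the selection of that root: since $P$ is quadratic, $P(\mathcal A_{\min})\leq\Fvol\leq P(\mathcal A_{\max})$ only becomes $\mathcal A_{\min}\leq\tfrac{\area(M)}{\chi(M)}\sqrt{K_f}\leq\mathcal A_{\max}$ once one knows $\mathrm{sign}(\mathcal A_{\min})=\mathrm{sign}(\mathcal A_{\max})=\mathrm{sign}(\chi(M))$, which the paper supplies via Corollary \ref{c:sign} and continuity in $f$.
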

Next, we establish the magnetic systolic-diastolic inequality for positive functions with large average. To make this concept precise, we need a definition. 
\begin{dfn}\label{d:strong}
For every $k\in \N$ and every $f:M\to(0,\infty)$, we set
\begin{equation*}
\langle f\rangle_k:=\frac{\Vert f\Vert_{C^k}}{\min f}\in[1,\infty).
\end{equation*}
For a constant $C>0$, we say that $f:M\to(0,\infty)$ is \textbf{$C$-strong}, if there holds
\begin{equation*}
f_\avg>\Big(\langle f\rangle_3^4+\langle f\rangle_2^6\Big)e^{C\langle f\rangle_1^2}.
\end{equation*}
\end{dfn}
 
\begin{thm}\label{t:mag}
Let $M$ be a connected oriented closed surface endowed with a Riemannian metric $g$. There exists a constant $C_g>0$ with the property that, if $f:M\to\R$ is $C_g$-strong, then the function $f$ satisfies the magnetic systolic-diastolic inequality.
\end{thm}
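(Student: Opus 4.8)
The plan is to read closed $f$-magnetic geodesics in $\mathfrak h_\infty$ as closed characteristics of an odd-symplectic form on the three-manifold $\ta^1M$, and then to check that $C_g$-strength places this form in the perturbative range where the local systolic-diastolic inequality of \cite{BK19a,BK19} applies. Concretely, write $\lambda$ for the canonical contact form on $\ta^1M$ (the one whose Reeb flow is the geodesic flow) and consider $\omega_f:=\di\lambda+\mathfrak p_\infty^*(f\mu)$. Since $f$ being $C_g$-strong forces $f^2+K>0$ everywhere -- the condition, established in the earlier sections, under which $\omega_f$ is genuinely odd-symplectic (and exact, hence a contact form, unless $M=\T^2$) -- its characteristic line field is spanned by $X_f$, its closed characteristics in the class $\mathfrak h_\infty$ are exactly the tangent lifts of the curves in $\Lambda(f;\mathfrak h_\infty)$, and, with the normalisation that makes the period of a closed characteristic equal to the magnetic length $\ell_f$ of the underlying curve, the volume-normalised action attached to $\omega_f$ works out -- via Gauss--Bonnet, $\int_MK\mu=2\pi\chi(M)$ -- to be exactly $\bar\ell(f)$. (This dictionary, together with non-emptiness of $\Lambda(f;\mathfrak h_\infty)$ and finiteness of $\ell_{\max}(f)$, is the business of the earlier sections; for large $f$ the latter two are also a by-product of the perturbative picture below, a closed characteristic in $\mathfrak h_\infty$ persisting from the Zoll model and having length controlled by $1/\min f$.)

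The second step rescales the problem into a neighbourhood of a Zoll flow. Replacing $g$ by $(\min f)^2g$ and $f$ by $f/\min f$ leaves the magnetic geodesics unchanged up to reparametrisation, normalises the magnetic curvature to be $\geq1$, and makes the $k$-th covariant derivatives of the function and of the Gaussian curvature of size $O\!\big(\langle f\rangle_k/(\min f)^k\big)$ and $O\!\big(1/(\min f)^2\big)$ -- tiny, because $C_g$-strength makes $\min f$ enormous compared to every $\langle f\rangle_k$. In this regime the $f$-magnetic geodesics are near-circular Larmor loops of length $\approx2\pi/f$, each confined to a $\hat g$-ball of radius $\lesssim1$ on which the metric is $C^3$-close to flat and $f$ is $C^3$-close to a constant; a normal form of guiding-centre (adiabatic) type then exhibits $X_f$, and with it $\omega_f$, as a $C^3$-small perturbation of a bona fide Zoll odd-symplectic (resp.\ contact) form $\omega_f^{\mathrm Z}$ whose characteristics are these loops -- and they lie in $\mathfrak h_\infty$, not $-\mathfrak h_\infty$, because $f>0$. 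The crucial point is to make this perturbation \emph{quantitatively} small: differentiating the flow equation up to third order and controlling the resulting linear variational systems along a Larmor loop by Gronwall's inequality yields a bound of the shape $\dist_{C^3}(\omega_f,\omega_f^{\mathrm Z})\leq\big(\langle f\rangle_3^4+\langle f\rangle_2^6\big)\,e^{C_g\langle f\rangle_1^2}/f_\avg$, and the definition of $C_g$-strength is precisely the requirement that this distance fall below the threshold allowed by the local inequality.

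Granting that, \cite{BK19a,BK19} give $\ell_{\min}(f)\leq\bar\ell(f)\leq\ell_{\max}(f)$ at once, the left- and right-hand sides being the systole and diastole of $\omega_f$ in the class $\mathfrak h_\infty$ and the middle term the volume-normalised action identified in Step 1. The equality clause of the local inequality says that either equality forces $\omega_f$ itself to be Zoll; translating back through Proposition \ref{p:magzollintro} and the fact that the relevant characteristics lie in $\mathfrak h_\infty$, this means exactly that $f$ is a Zoll function whose magnetic geodesics lie in $\Lambda(M;\mathfrak h_\infty)$, which is the assertion of the magnetic systolic-diastolic inequality.

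The main obstacle is the quantitative $C^3$-estimate in Step 2 -- tracking how the third-order jet of $X_f$ along the short orbits depends on the metric and on $\langle f\rangle_1,\langle f\rangle_2,\langle f\rangle_3$, and in particular pinning down the exponential dependence on $\langle f\rangle_1^2$, which is the cost of integrating the iterated variational equation around a Larmor loop whose frequency is of order $f$ and whose coefficients carry a factor of $\di f$, while keeping the remaining powers low enough to be absorbed by the hypothesis. The reduction to contact/odd-symplectic data and the invocation of \cite{BK19a,BK19} are, by contrast, essentially formal once the earlier sections are in hand.
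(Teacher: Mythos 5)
Your overall architecture is the right one -- encode closed $f$-magnetic geodesics as closed characteristics of $\Omega_f=\di\alpha_{\mathrm{can}}+\mathfrak p_\infty^*(f\mu)$, show that $C_g$-strength places this form within the perturbative range of the local systolic-diastolic inequality of \cite{BK19a,BK19}, and translate action and volume back into $\ell_f$ and $\bar\ell(f)$ -- and you correctly identify that the whole content of the theorem is a quantitative $C^2$/$C^3$-closeness estimate whose shape matches Definition~\ref{d:strong}. (One small correction along the way: $\Omega_f$ is odd-symplectic for \emph{every} $f$, since $\Omega_f=\iota_{X_f}(\alpha_{\mathrm{can}}\wedge\di\alpha_{\mathrm{can}})$ with $X_f$ nowhere vanishing; no condition like $f^2+K>0$ is needed for that.)

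The genuine gap is in your Step 2. You propose to produce, by a guiding-centre/adiabatic normal form along the Larmor loops, an $f$-dependent \emph{exactly} Zoll form $\omega_f^{\mathrm Z}$ that is $C^3$-close to $\omega_f$. Two things go wrong there. First, adiabatic normal forms yield approximately invariant circles to any finite order, not an honest free $S^1$-action; manufacturing a nearby exactly Zoll form with quantitative control is essentially as hard as the theorem itself, and nothing in your sketch supplies it. Second, even granting such a family, Theorem~\ref{thm:lsios} furnishes a $C^2$-neighbourhood of a \emph{fixed} Zoll form $\Omega_*$, so you would need this neighbourhood to be uniform over the family $\{\omega_f^{\mathrm Z}\}$ (made worse by your rescaling of the metric, which changes $\alpha_{\mathrm{can}}$ and hence the ambient data); you do not address this. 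The paper avoids both problems by taking the single fixed reference form $\Omega_\infty=\mathfrak p_\infty^*\mu$ -- the ``infinite field strength'' limit, whose characteristics are the $\mathfrak p_\infty$-fibres -- and showing (Lemma~\ref{l:normal}) that $\tfrac{1}{f_\avg}\Omega_f$ can be carried into any prescribed $C^2$-neighbourhood of $\Omega_\infty$: one writes $\tfrac{1}{f_\avg}\Omega_f=\tfrac{1}{f_\avg}\di\alpha_{\mathrm{can}}+\mathfrak p_\infty^*(f_{\mathrm{norm}}\mu)$, uses Moser's trick on $M$ to pull $f_{\mathrm{norm}}\mu$ back to $\mu$, lifts the Moser isotopy horizontally to $\ta^1M$, and controls $\Vert\Psi^*(\di\alpha_{\mathrm{can}})\Vert_{C^2}$ by Gronwall estimates on that flow -- this is where the bound $(\langle f\rangle_3^4+\langle f\rangle_2^6)e^{C\langle f\rangle_1^2}$ actually comes from, and dividing by $f_\avg$ is exactly what the $C_g$-strong hypothesis beats. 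Finally, you flag but do not carry out the case $M=\T^2$, where $\Omega_f$ is not exact, the contact framework of \cite{BK19a} is unavailable, and one must instead normalise $\bar\Omega_f=\Omega_f/f_\avg$ within the cohomology class $[\Omega_\infty]$ and use the degenerate form $\mathcal A_{\min}\le 0\le\mathcal A_{\max}$ of the inequality.
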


\begin{rmk}
It is plausible that Theorem \ref{t:mag2}  still holds if we let the metric $g$ also vary. To be precise, if $f_*$ is Zoll with respect to a metric $g_*$, then there should exist a $C^3$-neighbourhood $\mathcal G$ of $g_*$ and a $C^2$-neighbourhood $\mathcal F$ of $f_*$ such that for every $(g,f)\in\mathcal G\times \mathcal F$, $f$ satisfies the magnetic systolic-diastolic inequality with respect to $g$. Actually, in the purely Riemannian case (namely, when $f=0$), the systolic-diastolic inequality holds true for metrics $g$ on $S^2$, whose curvature is suitably pinched, see \cite{ABHS17} and also \cite[Corollary 4]{ABHS15}. We also expect Theorem \ref{t:mag} to be true if we let $g$ vary in a $C^3$-bounded set.
\end{rmk}

For all positive real numbers $s$, we have
\begin{equation}\label{e:rescale}
(sf)_\avg=s(f_\avg),\qquad\langle sf\rangle_k=\langle f\rangle_k,\quad \forall\,k\in\N.
\end{equation}
Thus, Theorem \ref{t:mag} applies to large rescalings of any positive function.
\begin{cor}\label{c:mag}
Let $M$ be a connected oriented closed surface endowed with a Riemannian metric $g$. For every $f:M\to(0,\infty)$, there exists a positive number $s(g,f)>0$ such that if $s>s(g,f)$, then the function $sf$ satisfies the magnetic systolic-diastolic inequality.\qed
\end{cor}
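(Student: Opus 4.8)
The plan is to obtain the corollary as an immediate consequence of Theorem \ref{t:mag} together with the scaling identities \eqref{e:rescale}. First I would fix, once and for all, the constant $C_g>0$ furnished by Theorem \ref{t:mag} for the given metric $g$. Now let $f:M\to(0,\infty)$ be arbitrary. Since $f>0$ and $M$ is compact, $f_\avg>0$, and the quantities $\langle f\rangle_1,\langle f\rangle_2,\langle f\rangle_3\in[1,\infty)$ are finite real numbers depending only on $f$ (and on $g$, through the $C^k$-norms).

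Next I would determine for which $s>0$ the rescaled function $sf$ is $C_g$-strong in the sense of Definition \ref{d:strong}. By \eqref{e:rescale} we have $(sf)_\avg=s\,f_\avg$ and $\langle sf\rangle_k=\langle f\rangle_k$ for all $k\in\N$, so the defining inequality of $C_g$-strength for $sf$ reads
\[
s\,f_\avg>\big(\langle f\rangle_3^4+\langle f\rangle_2^6\big)\,e^{C_g\langle f\rangle_1^2}.
\]
The right-hand side is a finite positive constant independent of $s$. Hence, setting
\[
s(g,f):=\frac{\big(\langle f\rangle_3^4+\langle f\rangle_2^6\big)\,e^{C_g\langle f\rangle_1^2}}{f_\avg}>0,
\]
we see that $sf$ is $C_g$-strong whenever $s>s(g,f)$. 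For each such $s$ the function $sf$ is still positive, so Theorem \ref{t:mag} is applicable to $sf$ and yields that $sf$ satisfies the magnetic systolic-diastolic inequality, which is exactly the assertion of the corollary.

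I do not expect any genuine obstacle in this argument: it is a routine rescaling deduction from Theorem \ref{t:mag}. The only points that need to be checked carefully are the harmless ones already noted above, namely that $f>0$ forces $f_\avg>0$ — so that the division defining $s(g,f)$ is legitimate and produces a genuine positive number — and that $sf$ remains a positive function, which is required because the $C_g$-strong condition (and hence the hypothesis of Theorem \ref{t:mag}) is formulated only for positive functions.
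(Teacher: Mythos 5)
Your argument is correct and is exactly the paper's (the corollary is stated with an immediate \qed precisely because \eqref{e:rescale} shows that $sf$ is $C_g$-strong once $s$ exceeds the threshold you write down). The explicit formula for $s(g,f)$ and the checks that $f_\avg>0$ and $sf>0$ are fine.
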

Theorem \ref{t:mag2} and Theorem \ref{t:mag} are consequences of the contact systolic-diastolic inequality established in \cite{BK19a} when $M$ is different from the two-torus, as in this case the tangent lifts of magnetic geodesics are the trajectory of a Reeb flow on the unit tangent bundle, up to reparametrisation. If $M$ is the two-torus, its unit tangent bundle is trivial, and results in \cite{BK19a} are not applicable. Instead, in this case, the theorems follow from the systolic-diastolic inequality for odd-symplectic forms explored in \cite{BK19}.
\bigskip

\noindent\textbf{Acknowledgements.} This work is part of a project in the Collaborative Research Center \textit{TRR 191 - Symplectic Structures in Geometry, Algebra and Dynamics} funded by the DFG. It was initiated when the authors worked together at the University of M\"unster and partially carried out while J.K.~was affiliated with the Ruhr-University Bochum. We thank Peter Albers, Kai Zehmisch, and the University of M\"unster for having provided an inspiring academic environment. We are indebted to Stefan Suhr for the proof of $\epsilon(f)=\mathrm{sign}(f_\avg)$ for $M\neq \T^2$ in Proposition \ref{p:magzollintro}. G.B.~would like to express his gratitude to Hans-Bert Rademacher and the whole Differential Geometry group at the University of Leipzig. J.K.~is supported by Samsung Science and Technology Foundation under Project Number SSTF-BA1801-01.

\section{Preliminaries}\label{s:applications}

\subsection{The unit tangent bundle}\label{ss:unit}

As mentioned in the introduction, $f$-magnetic geodesics $c$ yield trajectories $(c,\dot c)$ of a flow $\Phi^{X_f}$ on the unit tangent bundle $\ta^1M$. The generating vector field $X_f$ can be explicitly written as
\begin{equation*}\label{eq:X_f}
X_f=X+\frac{1}{2\pi}(f\circ\mathfrak p_\infty)V,
\end{equation*}
where $X$ is the geodesic vector field of $g$, and $V$ is the vector field whose flow rotates the fibres of the map $\mathfrak p_\infty$ in the $\mathfrak o_M$-\textit{negative} direction with constant angular speed $\tfrac{1}{2\pi}$. Thus, the vector field $V$ generates a free $S^1$-action on $\ta^1M$ (our convention is $S^1=\R/\Z$) and we denote by \label{dfn:hinfty}${\mathfrak h_\infty}\in[S^1,\ta^1M]$ the free-homotopy class of the orbits of $V$, namely of the oriented $\mathfrak p_\infty$-fibres. The Levi-Civita one-form $\eta\in\Omega^1(\ta^1M)$ is the connection for $\mathfrak p_\infty$ satisfying
\begin{equation}
\label{eq:Levi_Civita}
\eta(V)=1,\qquad \di\eta=\frac{1}{2\pi}\mathfrak p_\infty^*(K\mu),
\end{equation}
where $K$ is the Gaussian curvature of $g$. This implies that $e=\tfrac{1}{2\pi}[K\mu]$, where $e$ is minus the real Euler class of $\mathfrak p_\infty$.
   
Let $\alpha_{\mathrm{can}}$ be the canonical one-form on $\ta^1M$ given by 
\[
(\alpha_{\mathrm{can}})_v\cdot Y:=g(v,\di_v\mathfrak p_\infty\cdot Y),\qquad \forall\,Y\in \ta_v(\ta^1M).
\] 
It is a contact form and its Reeb vector field coincides with the geodesic vector field $X$. There holds (see \cite[V.2.5 and (5.2.12)]{Ber65})
\begin{equation}\label{eq:alpha_can}
\alpha_{\mathrm{can}}\wedge\di\alpha_{\mathrm{can}}=2\pi\eta\wedge\mathfrak p_\infty^*\mu
\end{equation}
so that $\alpha_{\mathrm{can}}\wedge\di\alpha_{\mathrm{can}}$ is a positive form with respect to $\mathfrak o_{\ta^1 M}=\mathfrak o_V\oplus\mathfrak o_M$.

\begin{dfn}\label{d:odd}
We call a two-form $\Omega$ on $\ta^1M$ {\bf odd-symplectic} if it is closed and maximally non-degenerate. An odd-symplectic form is called {\bf Zoll} if there exists an oriented  $S^1$-bundle $\mathfrak p_\Omega:\ta^1M\to M_\Omega$ such that the oriented leaves of the distribution $\ker\Omega$ are fibres of $\mathfrak p_\Omega$. In this case, $\Omega$ descends to a symplectic form $\om$ on $M_\Omega$, i.e.~$\mathfrak p_\Omega^*\om=\Omega$. We endow $M_\Omega$ with the orientation induced by $\ta^1M$ and $\mathfrak p_\Omega$, or, equivalently, the orientation given by $\omega$.
\end{dfn}
\begin{rmk}
The two-form 
\[\label{eq:Omega_infty}
\Omega_\infty:=\mathfrak p_\infty^*\mu
\]
is an example of a Zoll odd-symplectic form. Its associated oriented bundle is $\mathfrak p_\infty$.
\end{rmk}

The two-form 
\[
\Omega_f:=\di\alpha_{\mathrm{can}}+\mathfrak p_\infty^*(f\mu),
\]
is odd-symplectic, and the vector field $X_f$ is a nowhere vanishing section of the characteristic distribution $\ker\Omega_f$. Indeed, from the equations above, we have $\Omega_f=\iota_{X_f}(\alpha_{\mathrm{can}}\wedge\di\alpha_{\mathrm{can}})$. This also shows that
\begin{equation}\label{e:orientation}
\mathfrak o_{\ta^1M}=\mathfrak o_{\Omega_f}\oplus \mathfrak o_{X_f},
\end{equation}
where $\mathfrak o_{\Omega_f}$ is the co-orientation of the characteristic distribution of $\Omega_f$. We readily see that $f$ is Zoll in the sense of Definition \ref{def:Zoll_function} if and only if $\Omega_f$ is Zoll in the sense of Definition \ref{d:odd}.

To determine the cohomology class $[\Omega_f]=f_\avg[\Omega_{\infty}]\in H^2_\dR(\ta^1M)$, we observe that the map $\mathfrak p_\infty^*:H^2_\dR(M)\to H^2_\dR(\ta^1M)$ is zero if $M\neq\T^2$ and is injective if $M=\T^2$. This follows from the Gysin sequence
\begin{equation*}
H^0_\dR(M)\stackrel{\cup e}{\longrightarrow}H^2_\dR(M)\stackrel{\mathfrak p_\infty^*}{\longrightarrow} H^2_\dR(\ta^1M)
\end{equation*}
and \eqref{e:state}. Therefore, $\Omega_{\infty}$ is exact if and only if $M\neq\T^2$.

Let us now write the $f$-magnetic length of some $c\in\Lambda(M;\mathfrak h_\infty)$ in term of $\Omega_f$. The Riemannian length of $c$ can be expressed as
\begin{equation*}
\ell(c)=\int_{\R/T\Z}(c,\dot c)^*\alpha_{\mathrm{can}}.
\end{equation*}
If $\Gamma:[0,1]\x S^1\to\ta^1M$ is the cylinder lifting an admissible capping disc $C$, we have
\begin{equation}\label{eq:length_Gamma}
\ell_f(c)=\ell(c)+\int_{D^2}C^*(f\mu)=\int_{[0,1]\x S^1}\Gamma^*\Omega_f,
\end{equation}
due to $\int_0^1\Gamma(0,\cdot)^*\alpha_{\mathrm{can}}=0$ and Stokes' Theorem. From this formula, we deduce that the value of $\ell_f(c)$ does not depend on the choice of the admissible disc $C$. Let $C'$ be another admissible capping disc for $c$, which is the projection of another cylinder $\Gamma'$ in $\ta^1M$ such that $\Gamma'(0,\cdot)$ is an oriented $\mathfrak p_\infty$-fibre and $\Gamma'(1,\cdot)$ coincides with $(c,\dot c)$, up to reparametrisation. The cylinder $\Gamma''$ obtained concatenating $s\mapsto \Gamma'(s,\cdot)$ with the reversed cylinder $s\mapsto \Gamma(1-s,\cdot)$ projects to a sphere $\sigma:S^2\to M$. The value of $\ell_f(c)$ obtained using $C$ is the same as the one obtained using $C'$ if and only if the following integral vanishes
\[
\int_{[0,1]\x S^1}(\Gamma'')^*\Omega_f=\int_{[0,1]\x S^1}(\Gamma'')^*\big(\mathfrak p_\infty^*(f\mu)\big)=\langle [f\mu],[\sigma]\rangle.
\]
Therefore, it is enough to show that $[\sigma]=0$. If $M\neq S^2$, this is clear since $\pi_2(M)$ vanishes. If $M=S^2$, then $[\sigma]=0$ if and only if $\langle e,[\sigma]\rangle=0$. By \eqref{eq:Levi_Civita} and Stokes' Theorem, we get
\[
\langle e,[\sigma]\rangle=\langle [\tfrac{1}{2\pi}K\mu],[\sigma]\rangle=\int_{[0,1]\times S^1}(\Gamma'')^*\eta=\int_{S^1}(\gamma_0')^*\eta-\int_{S^1}\gamma_0^*\eta=1-1=0.
\]

\subsection{Proof of Proposition \ref{p:magzollintro}}
\subsubsection*{Case $M=\T^2$}
Since $\mathfrak p_\infty$ is a trivial bundle, \cite[Proposition 1.9]{BK19} implies that $e_f=0$ and that $\Omega_f$ is not exact. Moreover, by \cite[Lemma 4.5]{BK19}, if $\mathfrak p:\ta^1\T^2\to M_{\mathfrak p}$ is an oriented $S^1$-bundle over a closed surface $M_{\mathfrak p}$, for any $c\in H^2_{\dR}(M_{\mathfrak p})$ and $\pt\in M_{\mathfrak p}$ there holds
\begin{equation*}
\mathrm{PD}(\mathfrak p^*c)=\langle c,[M_{\mathfrak p}]\rangle\cdot[\mathfrak p^{-1}(\pt)]\in H_1(\ta^1\T^2;\R)
\end{equation*}
where $\mathrm{PD}$ denotes Poincar\'e duality. Applying this identity for $\mathfrak p=\mathfrak p_\infty$, we get
\begin{equation*}
\mathrm{PD}([\Omega_f])=\mathrm{PD}([\mathfrak p_\infty^*(f\mu)])=\area(\T^2)\cdot f_\avg\cdot[\mathfrak p_\infty^{-1}(\mathrm{pt})]\in H_1(\ta^1\T^2;\R),
\end{equation*}
which implies $f_\avg\neq0$ and, as a consequence, $K_f>0$. Applying again the identity for $\mathfrak p=\mathfrak p_f$, we deduce
\[
\mathrm{PD}([\Omega_f])=\langle [\omega_f],[\T^2_f]\rangle\cdot [\mathfrak p_f^{-1}(\pt)],
\]
where $\langle[\om_f],[\T^2_f]\rangle>0$, as $\om_f$ is a positive symplectic form. By comparing the two formulae for $\mathrm{PD}([\Omega_f])$, we derive $[\mathfrak p_f^{-1}(\pt)]=\mathrm{sign}(f_\avg)\cdot [\mathfrak p_\infty^{-1}(\mathrm{pt})]$, as the homology classes of the fibres of $\mathfrak p_\infty$ and of $\mathfrak p_f$ are both primitive. Since $\ta^1\T^2$ is diffeomorphic to the three-torus, we have an isomorphism between the set of free-homotopy classes and the set of first homology classes, so that $\mathfrak h_f=\mathrm{sign}(f_\avg)\mathfrak h_\infty$ holds, as well. Finally, as $[\Omega_f]=f_\avg[\Omega_\infty]$, the existence of a path $\{\mathfrak p_r \}$ connecting $\mathrm{sign}(f_\avg)\mathfrak p_\infty$ to $\mathfrak p_f$ follows from \cite[Proposition 1.9, Remark 1.10]{BK19}.
\qed

\subsubsection*{Case $M\neq\T^2$}
The existence of a path $\{\mathfrak p_r\}$ connecting $\pm\mathfrak p_\infty$ to $\mathfrak p_f$ is a consequence of \cite[Proposition 1.2]{BK19a}. It implies at once that $\mathfrak h_f=\pm\mathfrak h_\infty$, and by continuity also that $\langle e_f,[M_f]\rangle=\chi(M)$, since \eqref{e:state} holds. Notice indeed that the Euler number of $\mathfrak p_\infty$ is also equal to $\chi(M)$, since the Euler class of $-\mathfrak p_\infty$ is minus the Euler class of $\mathfrak p_\infty$ and $-\mathfrak p_\infty$ induces the opposite orientation on $M$, so the two minus signs cancel out when computing the Euler number. When $M=S^2$, there is nothing else to prove, so let us assume for the rest of the proof that $\chi(M)<0$. In this case, the inequality $K_f>0$ is proven in Corollary \ref{c:sign} and we are left to establish (b). We will show, namely, that $f_\avg>0$, provided the magnetic geodesics of $f$ lie in $\Lambda(M;\mathfrak h_\infty)$. We assume by contradiction that $f_\avg<0$. Thanks to Remark \ref{r:ellbar}, this is equivalent to assuming that $\bar{\ell}(f)<0$. By Theorem \ref{t:mag2}, we have $\ell_f(c)=\bar{\ell}(f)$ for every prime closed $f$-magnetic geodesic $c$. Notice that we are allowed to use Theorem \ref{t:mag2}, since, when $M\neq \T^2$, such a result depends only on part (a) and (c) of Proposition \ref{p:magzollintro}. Let $L_0:\ta M\to\R$ be the energy density $L_0(q,v)=\tfrac12g_q(v,v)$ and let $\Lambda_0(M)$ be the set of contractible loops on $M$ with arbitrary period. We define the Lagrangian free-period action functional $S_k^{L_0}:\Lambda_0(M)\to\R$ with parameter $k\in\R$ by
\[
S_k^{L_0}(c):=\int_0^T\Big[L_0+k\Big](c(t),\dot c(t))\di t+\int_{D^2}C^*(f\mu),\quad\forall\,c\in \Lambda_0(M),
\]
where $C:D^2\to M$ is a capping disc for $c\in \Lambda_0(M)$. The definition does not depend on $C$ since $\pi_2(M)=0$. Moreover, observe that if $c^m:\R/mT\Z\to M$ is the $m$-th iteration of $c$, we have
\begin{equation}\label{e:iteration}
S_k^{L_0}(c^m)=mS_k^{L_0}(c).
\end{equation}

Let $\mathcal L^{L_0}:\ta M\to \ta^*M$ be the Legendre transform associated with the Lagrangian $L_0$ and let $H_0:\ta^*M\to\R$ be the kinetic energy function with respect to the dual metric. The function $H_0$ is Legendre dual to $L_0$, namely
\[
L_0(q,v)=\mathcal L^{L_0}(q,v)\cdot v-H_0\big(\mathcal L^{L_0}(q,v)\big),\quad \forall\,(q,v)\in\ta M.
\]
Let $\widehat\Omega_f$ be the twisted symplectic form on $\ta^*M$, which is defined by
\[
\widehat{\Omega}_f:=\di\widehat{\alpha}_\can+\widehat{\mathfrak p}_\infty^*(f\mu),
\]
where $\widehat{\alpha}_\can$ is the canonical one-form on $\ta^*M$ and $\widehat{\mathfrak p}_\infty:\ta^*M\to M$ is the foot-point projection. We also define the Hamiltonian free-period action functional $A_k^{H_0}:\Lambda_0(\ta^*M)\to\R$ on the set of contractible loops in $\ta^*M$ by
\[
A_k^{H_0}(q,p):=\int_{D^2}(Q,P)^*\widehat{\Omega}_f+\int_0^T\Big[k-H_0\Big](q(t),p(t))\di t,\quad\forall\,(q,p)\in\Lambda_0(\ta^*M),
\]
where $(Q,P):D^2\to\ta^*M$ is any capping disc for $(q,p)\in\Lambda_0(\ta^*M)$. Since $\pi_2(\ta^*M)=0$, the definition does not depend on $(Q,P)$. For all $c\in\Lambda_0(M)$, there holds
\[
S_k^{L_0}(c)=A_k^{H_0}\big(\mathcal L^{L_0}(c,\dot c)\big).
\]
It is a classical result that the Hamiltonian flow lines of $H_0$ with respect to $\widehat{\Omega}_f$ on the energy hypersurface $\{H_0=\tfrac12\}$ are exactly the curves $\mathcal L^{L_0}(c,\dot c)$, where $c$ is an $f$-magnetic geodesic. Therefore, if $c$ is a prime closed $f$-magnetic geodesic, we have
\begin{equation}\label{e:SA}
0>\ell_f(c)=S_{\frac12}^{L_0}(c)=A_{\frac12}^{H_0}(\mathcal L^{L_0}(c,\dot c))=\int_{D^2}(Q_c^{L_0},P_c^{L_0})^*\widehat{\Omega}_f,
\end{equation}
where $(Q_c^{L_0},P_c^{L_0})$ is a capping disc for $\mathcal L^{L_0}(c,\dot c)$.

The two-form $\Omega_f$ on $\ta^1M$ is exact since it is Zoll and $\mathfrak p_f$ is non-trivial. Thus, there exists a contact form $\lambda_f$ on $\ta^1M$ such that $\di\lambda_f=\Omega_f$. This implies that $\{H_0=1/2\}$ is a stable hypersurface inside $(\ta^*M,\widehat{\Omega}_f)$ in the sense of \cite[Section 4.3]{HZ11} and \cite[Section 2]{CM05}. By \cite[Lemma 2.1]{MP10}, there exist $k_*\in\R$, an open interval $I$ containing $k_*$, a function $h:I\to(0,\infty)$, a Tonelli Hamiltonian $H:\ta^*M\to\R$ and a diffeomorphism $\Psi:\{H=k_*\}\times I\to \{H\in I\}$ such that the following identities hold
\begin{align*}
(i)&\ \ \{H_0\leq 1/2 \}=\{H\leq k_*\},\\
(ii)&\ \ H\circ\Psi(p,r)=r,\quad \forall\,(p,r)\in \{H=k_*\}\times I,\\
(iii)&\ \ \Psi\Big(\Phi_H^{h(r)t}(p),r\Big)=\Phi_H^t(\Psi(p,r)),\quad\forall\,t\in\R,\ (p,r)\in \{H=k_*\}\times I,
\end{align*}
where $\Phi_H$ is the Hamiltonian flow of $H$. Actually, the result in \cite{MP10} is stated for Hamiltonians on the standard cotangent bundle $(T^*M,\widehat{\Omega}_0)$ but a careful inspection of the 
proof reveals that the statement holds also on the twisted cotangent bundle $(T^*M,\widehat{\Omega}_f)$. Finally, let $L:\ta M\to\R$ be the Legendre dual of $H$. From (i), it follows that $\Phi_H$ and $\Phi_{H_0}$ have the same oriented orbits on $\{H_0=1/2 \}=\{H=k_*\}$. Therefore, if $c_1$ is a prime periodic solution of the Euler-Lagrange flow of $L$ with energy $k_*$, we have that
\begin{equation}\label{e:D2}
\int_{D^2}(Q_c^{L_0},P_c^{L_0})^*\widehat{\Omega}_f=\int_{D^2}(Q_{c_1}^{L},P_{c_1}^{L})^*\widehat{\Omega}_f,
\end{equation}
where $(Q_{c_1}^{L},P_{c_1}^{L})$ is a capping disc for $\mathcal L^L(c_1,\dot{c_1})$. The left-hand side of \eqref{e:D2} is negative by \eqref{e:SA}. Moreover, using the last two passages in \eqref{e:SA} backwards with $k_*$, $c_1$, $H$ and $L$ instead of $\frac12$, $c$, $H_0$ and $L_0$, we see that the right-hand side of \eqref{e:D2} is equal to $S^L_{k_*}(c_1)$. Summing up, we have shown that $S^L_{k_*}(c_1)<0$. By (ii),(iii) and \eqref{e:iteration}, we see that, up to shrinking the interval $I$, we can assume that all periodic solutions (prime and iterated) of the Euler-Lagrange flow of $L$ with energy $k\in I$ have negative action $S^L_k$. However, since $\mathfrak p_\infty:\{H=k_*\}\to M$ is an $S^1$-bundle and there exist contractible curves with negative $S^L_k$ action for all $k\in I$, we conclude that $I\subset(c_*(L),c_u(L))$, where $c_*(L)=-\min_{q\in M} L(q,0)$ and $c_u(L)\in\R$ is the Ma\~n\'e critical value of the universal cover. However, by \cite[Theorem 1.1(2)]{M10} or \cite[Theorem 1.3]{AB16}, there exists for almost every $k\in(c_*(L),c_u(L))$ a period orbit of the Euler-Lagrange flow of $L$ with energy $k$ and positive $S^L_k$-action. This contradiction shows that $\bar{\ell}(f)>0$ and finishes the proof.\qed  

\subsection{The space of curves $\Lambda(M;\mathfrak h_\infty)$}
In this subsection, we will study the set $\Lambda(M; \mathfrak h_\infty)$ in more detail. We start with a characterisation of this space by means of the turning number of an immersed curve $b:\R/T\Z\to\R^2$ that is the winding number of its velocity curve $\dot b:\R/T\Z\to\R^2$ with respect to $0\in\R^2$.
\begin{lem}\label{l:turn}
Let $c$ be an immersed closed curve in $M$ that is contractible. The curve $c$ belongs to $\Lambda(M; \mathfrak h_\infty)$ if and only if the following condition holds.
\begin{description}
	\item[\normalfont \textit{$\bullet$ Case $M=S^2$.}]The turning number of $\psi\circ c$ is odd, where $\psi:S^2\setminus\{q\}\to\R^2$ is a diffeomorphism and $q\in M$ lies outside the support of $c$.
	\item[\normalfont \textit{$\bullet$ Case $M\neq S^2$.}] The turning number of $\widetilde c$ is equal to $-1$, where $\widetilde c:\R/T\Z\to \widetilde M\subset \R^2$ is a lift of $c$ to the universal cover of $M$. In this case, the curve $c$ is prime.\qed
\end{description}
\end{lem}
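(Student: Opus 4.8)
The plan is to compute the free‑homotopy class $[(c,\dot c)]\in[S^1,\ta^1M]$ directly, by trivialising the oriented circle bundle $\mathfrak p_\infty$ over a suitable contractible space and reading off the winding of $\dot c$ as a degree.

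\emph{Case $M=S^2$.} Since $q$ lies outside the support of $c$, the tangent lift $(c,\dot c)$ is a loop in $\ta^1(S^2\setminus\{q\})$. Fix a $g$-orthonormal, $\mathfrak o_M$-oriented frame field on $S^2\setminus\{q\}$; using the diffeomorphism $\psi$ to identify $S^2\setminus\{q\}\cong\R^2$, this trivialises $\ta^1(S^2\setminus\{q\})\cong\R^2\times S^1$ and turns $(c,\dot c)$ into $(\psi\circ c,\theta)$, where $\theta(t)$ is the angle of $\dot c(t)$ in the frame. As the base is contractible, the free-homotopy class of $(c,\dot c)$ in $\ta^1(S^2\setminus\{q\})$ equals $\deg\theta\in\Z$, and $\deg\theta$ agrees with the turning number of $\psi\circ c$: the two differ only by the winding along $c$ of the $\mathrm{GL}(2,\R)$-valued matrix of $\di\psi$ relative to the two frames, which is null-homotopic on the contractible space $S^2\setminus\{q\}$ (an orientation-reversing $\psi$ flips the sign but not the parity). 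Now the inclusion $\ta^1(S^2\setminus\{q\})\hookrightarrow\ta^1S^2$ carries a fibre onto a fibre, hence induces on $\pi_1$ the reduction $\Z\to\Z/2\Z=\pi_1(\ta^1S^2)$; and $\mathfrak h_\infty$ is the nontrivial class, because by \eqref{e:state} the Euler number of $\mathfrak p_\infty$ is $\pm\chi(S^2)=\pm2$. Therefore $c\in\Lambda(M;\mathfrak h_\infty)$ if and only if $\deg\theta$, i.e.\ the turning number of $\psi\circ c$, is odd.

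\emph{Case $M\neq S^2$.} Here $M$ is aspherical, so $\widetilde M$ is diffeomorphic to an open subset of $\R^2$ and $\pi_1(M)$ acts on it by orientation-preserving diffeomorphisms; since $c$ is contractible, it lifts to a loop $\widetilde c$ in $\widetilde M$, and the turning number of $\widetilde c$ is independent of the lift (two lifts differ by a deck transformation, whose differential is $\mathrm{GL}^+(2,\R)$-valued and hence null-homotopic on the contractible $\widetilde M$, so the winding of the velocity is unchanged). Let $p\colon\ta^1\widetilde M\to\ta^1M$ be the pullback of the universal covering $\widetilde M\to M$; it is a covering, it restricts to an orientation-preserving isomorphism on each fibre, and thus $p_*$ is injective and carries the fibre class of $\ta^1\widetilde M$ to $\mathfrak h_\infty$. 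Now $c\in\Lambda(M;\mathfrak h_\infty)$ means there is a cylinder $\Gamma\colon[0,1]\times S^1\to\ta^1M$ from an oriented $\mathfrak p_\infty$-fibre to $(c,\dot c)$; since $\Gamma_*$ sends the generator of $\pi_1([0,1]\times S^1)$ to $\mathfrak h_\infty$, which lies in the image of $p_*$, the cylinder $\Gamma$ lifts through $p$ to a cylinder from an oriented fibre of $\ta^1\widetilde M$ to the loop $(\widetilde c,\dot{\widetilde c})$ — and conversely any such cylinder upstairs projects down — so $c\in\Lambda(M;\mathfrak h_\infty)$ if and only if $(\widetilde c,\dot{\widetilde c})$ is freely homotopic in $\ta^1\widetilde M$ to an oriented fibre. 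Trivialising $\ta^1\widetilde M\cong\R^2\times S^1$ by a frame field as in the previous case identifies the free-homotopy class of $(\widetilde c,\dot{\widetilde c})$ with $\mathrm{tn}(\widetilde c)$, while the $\mathfrak o_M$-negative orientation of the fibres makes an oriented fibre correspond to turning number $-1$; hence $c\in\Lambda(M;\mathfrak h_\infty)$ if and only if $\mathrm{tn}(\widetilde c)=-1$. Finally, if $c=d^m$ with $m\geq2$, then $[d]^m=1$ in the torsion-free group $\pi_1(M)$ forces $[d]=1$, so $d$ lifts to a loop $\widetilde d$, the lift $\widetilde c$ is its $m$-fold iterate, and $\mathrm{tn}(\widetilde c)=m\,\mathrm{tn}(\widetilde d)\neq-1$; thus every curve in $\Lambda(M;\mathfrak h_\infty)$ is prime.

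The step that requires genuine care is the orientation bookkeeping in the final line of each case: one must match the ``$\mathfrak o_M$-negative'' orientation of the $\mathfrak p_\infty$-fibres against the ambient orientation of $\widetilde M\subset\R^2$ and against the convention fixing the sign of turning numbers, so that the criterion comes out precisely ``$\deg\theta$ odd'', respectively ``$\mathrm{tn}(\widetilde c)=-1$'', with no spurious sign. Everything else is standard homotopy theory of oriented circle bundles.
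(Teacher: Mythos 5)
The paper states Lemma \ref{l:turn} without proof, so there is no argument to compare against; your proposal supplies a correct and complete one. Both halves are sound: reducing the free-homotopy class to a degree via a frame trivialisation over a contractible base (with the mod-$2$ collapse $\pi_1(\ta^1(S^2\setminus\{q\}))\cong\Z\to\Z/2\cong\pi_1(\ta^1S^2)$ coming from the Euler number $\pm2$), and, for $M\neq S^2$, passing to the pullback covering $\ta^1\widetilde M\to\ta^1M$ via the lifting criterion and then reading off $\mathrm{tn}(\widetilde c)$ in $\pi_1(\ta^1\widetilde M)\cong\Z$; the primality claim via torsion-freeness of $\pi_1(M)$ and $\mathrm{tn}(\widetilde d^{\,m})=m\,\mathrm{tn}(\widetilde d)$ is also correct. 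The only point you should make fully explicit rather than merely flag is that the identification $\widetilde M\subset\R^2$ is taken compatibly with $\mathfrak o_M$, so that the $\mathfrak o_M$-negatively oriented fibre indeed corresponds to $-1\in\Z$ and no sign is lost.
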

A somewhat more geometrical sufficient condition for a curve to be in $\Lambda(M; \mathfrak h_\infty)$ is given by the notion of Alexandrov embeddedness.
\begin{dfn}
A closed and arc-length parametrised curve $c$ in $M$ is called {\bf negatively Alexandrov embedded} if it admits a negatively immersed capping disc $C:D^2\to M$. 
\end{dfn}
\begin{rmk}\label{r:schoen}
By the Sch\"onflies Theorem, a closed curve $c$ in $M$ is negatively Alexandrov embedded, if:
\begin{itemize}
\item $M=S^2$ and $c$ is embedded;
\item $M\neq S^2$ and the lift $\widetilde c$ to the universal cover $\widetilde M$ bounds a compact region in the clock-wise direction.  
\end{itemize}
\end{rmk}
\begin{lem}\label{l:homC}
If a closed curve $c$ in $M$ is negatively Alexandrov embedded, $c\in\Lambda(M; \mathfrak h_\infty)$ and any of its immersed capping discs is admissible. In particular, the curves from Remark \ref{r:schoen} belong to $\Lambda(M; \mathfrak h_\infty)$.
\end{lem}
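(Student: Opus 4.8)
The plan is to produce, directly from a given negatively immersed capping disc $C\colon D^2\to M$, the cylinder $\Gamma\colon[0,1]\x S^1\to\ta^1 M$ whose existence is what makes $C$ admissible; this settles both claims of the lemma at once, since admissibility of $C$ forces $(c,\dot c)$ to be freely homotopic to an oriented $\mathfrak p_\infty$-fibre through $\Gamma$, hence $c\in\Lambda(M;\mathfrak h_\infty)$. The last sentence then follows immediately from Remark \ref{r:schoen}, whose curves are negatively Alexandrov embedded by the Sch\"onflies theorem. Throughout I use that ``$C$ negatively immersed'' means $\di_z C\colon\R^2\to\ta_{C(z)}M$ is a linear isomorphism of \emph{negative} determinant for every $z\in D^2$ (the sign being constant by connectedness), i.e. $C^*\mu<0$, and that $C$ restricted to $\p D^2$ with its boundary orientation runs through $c$ in the positive direction.

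To build $\Gamma$, I would write a point of $D^2\subset\C$ in polar form $z=re^{i\theta}$ and consider, for $(r,\theta)\in[0,1]\x S^1$, the vector $w(r,\theta):=\di_{re^{i\theta}}C\cdot(ie^{i\theta})$, which for $r>0$ is $r^{-1}$ times the velocity of the loop $\theta\mapsto C(re^{i\theta})$. Since $C$ is an immersion and $ie^{i\theta}\neq0$, this vector never vanishes, and $(r,\theta)\mapsto w(r,\theta)$ is smooth on all of $[0,1]\x S^1$, \emph{including} $r=0$, because $(r,\theta)\mapsto re^{i\theta}$ and $\theta\mapsto ie^{i\theta}$ are smooth and $C\in C^\infty$; one then sets $\Gamma(r,\theta):=\big(C(re^{i\theta}),\,w(r,\theta)/|w(r,\theta)|_g\big)$. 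By construction $\mathfrak p_\infty\circ\Gamma(r,\theta)=C(re^{i\theta})$, and since $\Gamma(0,\cdot)$ has constant base point $C(0)$, the map $\mathfrak p_\infty\circ\Gamma$ descends along $([0,1]\x S^1)/(\{0\}\x S^1)\cong D^2$, $(r,\theta)\mapsto re^{i\theta}$, to the disc $C$ itself; at $r=1$, the facts that $\theta\mapsto C(e^{i\theta})$ traverses $c$ positively and $|\dot c|\equiv1$ give $\Gamma(1,\cdot)=(c,\dot c)$ up to an orientation-preserving reparametrisation. Finally, at $r=0$ one has $\Gamma(0,\theta)=\big(C(0),\,\di_0 C(ie^{i\theta})/|\di_0 C(ie^{i\theta})|_g\big)$, which lies in the single fibre $\mathfrak p_\infty^{-1}(C(0))$; as $\theta$ runs once around $S^1$, the vector $ie^{i\theta}$ runs once positively around the unit circle of $\R^2$, and applying the orientation-reversing isomorphism $\di_0 C$, followed by the sense-preserving radial normalisation $v\mapsto v/|v|_g$, yields a loop that traverses the fibre $\mathfrak p_\infty^{-1}(C(0))$ exactly once in the $\mathfrak o_M$-negative direction, which is precisely the orientation we put on the $\mathfrak p_\infty$-fibres. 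Hence $\Gamma(0,\cdot)$ is an oriented $\mathfrak p_\infty$-fibre (up to reparametrisation, which does not affect admissibility — or else one precomposes $\Gamma$ near $r=0$ with a fibrewise reparametrisation to obtain constant angular speed), and $\Gamma$ exhibits $C$ as an admissible capping disc for $c$.

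I expect the main obstacle to be the orientation bookkeeping at the inner boundary: one must check carefully that the hypothesis $C^*\mu<0$ is exactly what makes the loop $\Gamma(0,\cdot)$ wind around its fibre in the $\mathfrak o_M$-negative sense, in agreement with the chosen orientation of the fibres of $\mathfrak p_\infty$ (and with the ``clockwise'' sign conventions appearing elsewhere in the paper). A second, more routine, point is to verify that $\Gamma$ really is smooth across the collapsed circle $\{r=0\}$, which works because after cancelling the factor $r$ and passing to the angular direction $ie^{i\theta}$ the vector field $w$ along $C$ extends smoothly to $r=0$ with no zeros; note also that the same computation recovers the turning-number conditions of Lemma \ref{l:turn}, so the cylinder construction subsumes the membership statement and additionally yields admissibility, which is what is needed here.
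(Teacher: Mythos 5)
Your construction is exactly the paper's: one lifts the immersed disc $C$ to a cylinder in $\ta^1M$ by normalising the angular derivative $\partial_\theta C$, observes that after factoring out $r$ this extends smoothly and without zeros across the collapsed circle because $C$ is an immersion at $0$, and checks that the resulting central loop traverses the fibre over $C(0)$ once in the $\mathfrak o_M$-negative (i.e.\ positively oriented) direction precisely because $\di_0C$ reverses orientation. Your write-up is correct and in fact spells out the two points (smoothness at $r=0$ and the orientation of $\Gamma(0,\cdot)$) that the paper's proof leaves implicit.
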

\begin{proof}
Let $C:D^2\to M$ be a negatively immersed capping disc for $c$. Then, we can define
\[
(0,1]\times S^1\ni(s,t)\longmapsto \left(C(se^{2\pi it}),\frac{\partial_t C(se^{2\pi it})}{|\partial_t C(se^{2\pi it})|}\right)\in \ta^1 M.
\]
Since $C$ is a local embedding around $0\in D^2$, this map extends to $s=0$ and yields a cylinder $\Gamma:[0,1]\times S^1\to\ta^1 M$ such that
\begin{align*}
(i)&\ \ \mathfrak p_\infty(\Gamma(s,t))=C(se^{2\pi it}),\quad \forall\,(s,t)\in [0,1]\times S^1,\\
(ii)&\ \ \Gamma(0,t)=\Phi^{V}_{a(t)}(z),\quad \Gamma(1,t)=\big(c(tT),\dot c(tT)\big),\quad\forall\, t\in S^1,
\end{align*}
for some orientation-preserving diffeomorphism $a:S^1\to S^1$ and element $z\in \ta^1 M$. This shows that $C$ is admissible.
\end{proof}

We finish this subsection by providing a partial answer to the following natural question. If all the $f$-magnetic geodesics are closed, is the function $f$ (or, equivalently, the odd-symplectic two-form $\Omega_f$) Zoll? We collect the result in a lemma, which is a magnetic counterpart of the Gromoll-Grove Theorem \cite{GG82}.

\begin{lem}\label{l:charzollmag}
Suppose that every $f$-magnetic geodesic is closed. The function $f$ is Zoll in the following two cases:
\begin{enumerate}[(i)]
\item There holds $M\neq S^2$ and all the prime geodesics lie in $\Lambda(M; \mathfrak h_\infty)$.
\item There holds $M=S^2$ and either all prime closed magnetic geodesics are embedded or the function $f$ is positive and all prime closed magnetic geodesics are negatively Alexandrov embedded.
\end{enumerate}
\end{lem}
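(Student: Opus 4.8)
\textbf{Proof proposal for Lemma \ref{l:charzollmag}.}

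The plan is to reduce the statement to the corresponding rigidity results for the two auxiliary geometric objects attached to $f$-magnetic geodesics: the contact form $\alpha_{\can}$ perturbed by $f\mu$ when $M\neq S^2$ (so that $X_f$ is, up to time-change, a Reeb flow), and the odd-symplectic form $\Omega_f$ on $\ta^1 M$ in general. Recall from Section \ref{ss:unit} that the flow $\Phi^{X_f}$ on $\ta^1M$ has as orbits exactly the tangent lifts $(c,\dot c)$ of $f$-magnetic geodesics, and that $f$ is Zoll in the sense of Definition \ref{def:Zoll_function} precisely when $\Omega_f$ is Zoll in the sense of Definition \ref{d:odd}, i.e.\ when $\ker\Omega_f$ integrates to the fibres of an oriented $S^1$-bundle. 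So the hypothesis ``every $f$-magnetic geodesic is closed'' says precisely that every leaf of the one-dimensional foliation $\ker\Omega_f$ is a circle, and we must upgrade this to the existence of a free $S^1$-action whose orbits are these leaves. This is exactly a Gromoll--Grove/Wadsley-type statement: a flow all of whose orbits are periodic generates a circle action provided the period function is suitably controlled (e.g.\ bounded, or the foliation has no ``exceptional'' leaves of larger multiplicity), cf.\ \cite{GG82}.

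First I would dispose of case (i), $M\neq S^2$. Here, as noted after Theorem \ref{t:mag}, $\ta^1M\neq S^1\times\Sigma$ in a way that lets us invoke \cite{BK19a}: $\Omega_f$ is exact (shown in Section \ref{ss:unit}, since $M\neq\T^2$ makes $\Omega_\infty$ exact, and the $\T^2$ case is handled by \cite{BK19}), so there is a contact form $\lambda_f$ with $\di\lambda_f=\Omega_f$ whose Reeb flow is a time-reparametrisation of $\Phi^{X_f}$; closedness of all $f$-magnetic geodesics means the Reeb flow of $\lambda_f$ is periodic. The extra hypothesis that all prime geodesics lie in $\Lambda(M;\mathfrak h_\infty)$ pins down the free-homotopy class of every closed Reeb orbit to be $\mathfrak h_\infty$, which is primitive in $\pi_1(\ta^1M)$ (via Lemma \ref{l:turn}, the lift has turning number $-1$ and $c$ is prime). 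This rules out short orbits whose iterates close up only at a multiple period — the standard obstruction in Gromoll--Grove — so the period function is continuous and the flow defines a free $S^1$-action; its orbit space is then the base $M_f$ of an oriented $S^1$-bundle $\mathfrak p_f$, and $f$ is Zoll. Concretely this should be quotable from the rigidity part of \cite{BK19a} (the case of equality in the contact systolic-diastolic inequality), but even without it the Wadsley argument goes through because the homotopy constraint forbids exceptional orbits.

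For case (ii), $M=S^2$, the unit tangent bundle is $\R\pr^3$ and $\mathfrak h_\infty$ is the nontrivial free-homotopy class (it equals $-\mathfrak h_\infty$, as remarked). Here the geometric embeddedness hypotheses substitute for the homotopy constraint: by Lemma \ref{l:homC} and Remark \ref{r:schoen}, an embedded prime closed magnetic geodesic (or, when $f>0$, a negatively Alexandrov embedded one) lies in $\Lambda(S^2;\mathfrak h_\infty)$, hence its tangent lift is non-contractible in $\ta^1 S^2$ and, being prime, represents a primitive class; thus again no orbit closes up only after a nontrivial iteration, so the period function is locally constant along the foliation and the flow generates a free $S^1$-action, giving the Zoll bundle $\mathfrak p_f$. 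When $f$ may change sign one instead runs the argument through the odd-symplectic form $\Omega_f$ directly (using the rigidity in \cite{BK19}), the embeddedness of each orbit again preventing exceptional leaves. The main obstacle, and the one step that genuinely needs care, is ruling out exceptional orbits — orbits along which the infinitesimal first-return time is not the global period, equivalently ``shorter'' orbits around which nearby orbits wind several times; this is exactly where the hypotheses (the homotopy class $\mathfrak h_\infty$ in (i); embeddedness/Alexandrov embeddedness in (ii), together with positivity of $f$ to orient the capping discs coherently) are indispensable, and it is the point where one must appeal to the finer results of \cite{BK19a, BK19} rather than soft topology.
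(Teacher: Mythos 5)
Your overall strategy---first extract a Seifert-type $S^1$-action from ``all orbits closed,'' then rule out exceptional orbits using the hypotheses---is the same as the paper's, which gets the first step from Epstein's theorem \cite{Eps72} and then proves that the open set of freely-moved points is also closed by showing that a $C^\infty$-limit of prime lifts cannot be a $k$-th iterate with $k>1$. Your case (i) is essentially correct: for $M\neq S^2$ the fibre class $\mathfrak h_\infty$ is central and of infinite order in $\pi_1(\ta^1M)$, so $k\mathfrak h_\infty=\mathfrak h_\infty$ forces $k=1$ (the paper phrases the same computation via the turning number $-1$ from Lemma \ref{l:turn}). Two caveats there: the boundedness of the period function on a compact $3$-manifold is exactly Epstein's theorem and should be invoked rather than left as a proviso (it fails in higher dimensions, so it is not soft); and the detour through the rigidity parts of \cite{BK19a,BK19} is a red herring---the paper's proof of this lemma uses no systolic-diastolic machinery at all, only Epstein, turning numbers, and \cite{MS12}.

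The genuine gap is in case (ii). Since $\ta^1S^2\cong\R\pr^3$ has $\pi_1=\Z/2$, the class $\mathfrak h_\infty$ satisfies $k\mathfrak h_\infty=\mathfrak h_\infty$ for \emph{every odd} $k$, so ``primitivity'' of the class of a prime lift rules out only even iterates; your conclusion that ``no orbit closes up only after a nontrivial iteration'' does not follow from the homotopy class, and this is precisely why the lemma needs geometric hypotheses on $S^2$ that it does not need elsewhere. Those hypotheses must be used on the limit curve itself, not merely to place orbits in $\mathfrak h_\infty$: if embedded prime geodesics $c_m$ converge to the $k$-th iterate of $c$, then $c$ is itself embedded because $S^2$ is oriented, which forces $k=1$; and if $f>0$ and the $c_m$ are negatively Alexandrov embedded, the limit is again negatively Alexandrov embedded by \cite[Lemma 3.2]{MS12} and hence prime by \cite[Lemma 3.1]{MS12}, forcing $k=1$. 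You assert that embeddedness ``prevents exceptional leaves'' but give no mechanism; the closedness of the embedded (resp.\ Alexandrov embedded) class under $C^\infty$-limits together with the primeness statement from \cite{MS12} is the actual content of the $S^2$ case and is what your proposal is missing.
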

\begin{proof}
A theorem of Epstein \cite{Eps72} yields an $S^1$-action $\Phi_t:\ta^1 M\to\ta^1M$, $t\in S^1$, whose orbits coincide with the tangent lifts of magnetic geodesics (up to reparametrisation) and such that the set
\[
N:=\big\{z\in \ta^1 M\ |\ \Phi_t(z)\neq z,\ \forall\, t\in S^1\setminus 0\big\}
\]
is non-empty. The lemma follows once we show that $N=\ta^1M$. The set $N$ is open, so that, by the connectedness of $\ta^1M$, we just have to prove that $N$ is also closed. Let $(z_m)\subset N$ be a sequence such that $z_m\to z\in\ta^1 M$. Let $(c_m)$ be the corresponding sequence of magnetic geodesics and $c$ the magnetic geodesic corresponding to $z$. Since $z_m\to z$, there exists $k\in\N^*$ such that $(c_m)$ converges in the $C^\infty$-topology to the $k$-th iteration of $c$. It suffices to show that $k=1$. This would give that $z\in N$, and hence, that $N$ is closed.

Let us suppose that $M\neq S^2$. The lifts $(\widetilde c_m)$ and $\widetilde c$ to $\widetilde M$ are such that $(\widetilde c_m)$ converges to the $k$-th iteration of $\widetilde c$. From Lemma \ref{l:turn}, we conclude that $k$-times the turning number of $\widetilde c$ is equal to $-1$, which forces $k=1$.

Let us suppose that $M=S^2$. If all prime closed magnetic geodesics are embedded, then all the curves $c_m$ are embedded. Since $S^2$ is an oriented surface, it follows that $c$ is also embedded, which forces $k=1$. If $f$ is everywhere positive and the curves $c_m$ are negatively Alexandrov embedded, then by \cite[Lemma 3.2]{MS12}, $c$ is also negatively Alexandrov embedded. From \cite[Lemma 3.1]{MS12}, it follows that $c$ is prime, i.e.~$k=1$. 
\end{proof}
\begin{rmk}
In the previous lemma, we need extra conditions when $M=S^2$ since there exists a sequence of prime Alexandrov embedded curves $(c_m)$ which converges in the $C^\infty$-topology to a curve $c$, which is not prime. In particular, the set $\{c\in\Lambda(S^2;\mathfrak h_\infty)\ |\ c\text{ is prime}\,\}$ is not closed in the $C^\infty$-topology. Furthermore, there are examples of positive magnetic functions on the two-sphere all of whose magnetic geodesics are closed but their lifts to the unit tangent bundle are the orbits of a non-free $S^1$-action \cite{Ben16}.
\end{rmk}

\subsection{Strong magnetic functions}\label{ss:strongdef}

When $f:M\to (0,\infty)$ is large, then $f$-magnetic geodesics stay close to the fibres of $\mathfrak p_\infty$. In this case, we expect $\Omega_f$ to approximate the Zoll form $\Omega_\infty=\mathfrak p_\infty^*\mu$. Using the notion of $C$-strong function given in Definition \ref{d:strong}, we make this observation precise in the next lemma. This result will be employed in Section \ref{ss:strong} to establish the magnetic systolic-diastolic inequality for $C$-strong functions.
\begin{lem}\label{l:normal}
Let $\mathcal U$ be a $C^2$-neighbourhood of $\Omega_\infty$ in the space of two-forms on $\ta^1M$. There exists a constant $C_{\mathcal U}>0$ with the following property: For every $C_{\mathcal U}$-strong $f:M\to (0,\infty)$, there is a diffeomorphism $\Psi:\ta^1 M\to\ta^1 M$ isotopic to the identity such that $\tfrac{1}{f_\avg}\Psi^*\Omega_f\in\mathcal U$.
\end{lem}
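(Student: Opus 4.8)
The plan is to follow the heuristic that large positive $f$ forces $f$-magnetic geodesics to stay $C^2$-close to the oriented $\mathfrak p_\infty$-fibres, and then to convert this dynamical closeness into a closeness of two-forms after a suitable change of coordinates and rescaling. Recall that $\Omega_f=\di\alpha_{\can}+\mathfrak p_\infty^*(f\mu)$ and $X_f=X+\tfrac{1}{2\pi}(f\circ\mathfrak p_\infty)V$, and note that $\tfrac{1}{f_\avg}\Omega_f=\mathfrak p_\infty^*\big(\tfrac{f}{f_\avg}\mu\big)+\tfrac{1}{f_\avg}\di\alpha_{\can}$. The term $\tfrac{1}{f_\avg}\di\alpha_{\can}$ is $C^2$-small precisely when $f_\avg$ is large compared to the relevant $C^3$-norms of the metric data, but the remaining term $\mathfrak p_\infty^*\big(\tfrac{f}{f_\avg}\mu\big)$ is \emph{not} close to $\Omega_\infty=\mathfrak p_\infty^*\mu$ unless $f/f_\avg$ is close to $1$, which we do not assume. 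So the role of the diffeomorphism $\Psi$ is exactly to absorb the discrepancy between $f$ and $f_\avg$.

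First I would construct $\Psi$ as a fibrewise reparametrisation of the $S^1$-bundle $\mathfrak p_\infty$. Using the Levi-Civita connection one-form $\eta$ (with $\eta(V)=1$, $\di\eta=\tfrac{1}{2\pi}\mathfrak p_\infty^*(K\mu)$), one can write down a diffeomorphism that on each fibre rescales the $V$-flow time by a function built out of $f/f_\avg$, chosen so that the pulled-back area form $\Psi^*\big(\mathfrak p_\infty^*(\tfrac{f}{f_\avg}\mu)\big)$ equals $\mathfrak p_\infty^*\mu=\Omega_\infty$ exactly — this is a one-dimensional Moser-type argument on each fibre (a circle), so it boils down to prescribing a density on $S^1$ with the correct total mass $1$, which holds by definition of $f_\avg$. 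Since $f>0$ everywhere, $f/f_\avg>0$, so this reparametrisation is a genuine orientation-preserving diffeomorphism, and it is isotopic to the identity by linearly interpolating the reparametrisation function to the constant $1$. I would then estimate the $C^2$-size of $\Psi^*\big(\tfrac{1}{f_\avg}\di\alpha_{\can}\big)$: the $C^2$-norm of $\Psi$ and of its derivatives is controlled by $\langle f\rangle_k$-type quantities (derivatives of $f/f_\avg = f/f_\avg$ involve $\|f\|_{C^k}/f_\avg$, and on each fibre one integrates, introducing exponential-in-$\langle f\rangle_1$ factors from solving the ODE that defines the reparametrisation), while $\tfrac{1}{f_\avg}\di\alpha_{\can}$ carries an overall factor $1/f_\avg$. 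Combining, $\big\|\Psi^*\big(\tfrac{1}{f_\avg}\di\alpha_{\can}\big)\big\|_{C^2}\le \tfrac{1}{f_\avg}\big(\langle f\rangle_3^4+\langle f\rangle_2^6\big)e^{C\langle f\rangle_1^2}$ for a constant $C=C_{\mathcal U}$ depending only on $g$ and the chart conventions; hence the $C$-strong hypothesis $f_\avg>\big(\langle f\rangle_3^4+\langle f\rangle_2^6\big)e^{C\langle f\rangle_1^2}$ makes this error smaller than the radius of $\mathcal U$ around $\Omega_\infty$, which is what we want.

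The main obstacle I anticipate is the bookkeeping of the derivative estimates: getting the precise combination $\langle f\rangle_3^4+\langle f\rangle_2^6$ times $e^{C\langle f\rangle_1^2}$ out of the construction of $\Psi$. The exponential factor is the delicate point — it arises because $\Psi$ on a fibre is obtained by integrating/inverting a first-order ODE whose coefficient involves $f/f_\avg$ and its tangential derivative, and Gr\"onwall-type control of such a flow map and its derivatives naturally produces $\exp$ of (a constant times) the $C^1$-size of the coefficient, i.e.\ of $\langle f\rangle_1$; tracking the second and third tangential derivatives then multiplies through more copies of $\langle f\rangle_2$ and $\langle f\rangle_3$, with the stated powers coming from the chain/product rule applied to the ODE solution. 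Apart from this, one must check that all curvature- and chart-dependent quantities entering the estimate can be absorbed into a single constant $C_{\mathcal U}$ (equivalently $C_g$, once $\mathcal U$ is fixed in terms of $g$), which is routine given that $M$ is closed. One final point to verify is that $\tfrac{1}{f_\avg}\Psi^*\Omega_f=\Omega_\infty+\Psi^*\big(\tfrac{1}{f_\avg}\di\alpha_{\can}\big)$ genuinely lands in $\mathcal U$ and not merely close to it, which follows since $\mathcal U$ is an open $C^2$-neighbourhood of $\Omega_\infty$ and we have made the perturbation term as small as we like.
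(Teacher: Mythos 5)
There is a genuine gap in your construction of $\Psi$. You propose to take $\Psi$ to be a fibrewise reparametrisation of the bundle $\mathfrak p_\infty$, i.e.\ a diffeomorphism covering the identity on $M$, chosen so that $\Psi^*\big(\mathfrak p_\infty^*(\tfrac{f}{f_\avg}\mu)\big)=\mathfrak p_\infty^*\mu$. This cannot work: for any $\Psi$ with $\mathfrak p_\infty\circ\Psi=\mathfrak p_\infty$ one has $\Psi^*\circ\mathfrak p_\infty^*=(\mathfrak p_\infty\circ\Psi)^*=\mathfrak p_\infty^*$ on forms, so $\Psi^*\big(\mathfrak p_\infty^*(f_\norm\mu)\big)=\mathfrak p_\infty^*(f_\norm\mu)$ identically and the discrepancy between $f_\norm\mu$ and $\mu$ is untouched. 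The density that has to be normalised is $f_\norm=f/f_\avg$ on the two-dimensional base (the condition $\int_M f_\norm\mu=\area(M)$ says that $f_\norm\mu$ and $\mu$ are cohomologous area forms on $M$), not a density on the circle fibres; pulled-back two-forms annihilate the fibre direction, so nothing you do along the fibres can see $f_\norm$.

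The repair is to run the Moser argument on the base and then lift, which is what the paper does: choose a primitive $\zeta$ of $(f_\norm-1)\mu$ with elliptic estimates $\Vert\zeta\Vert_{C^k}\leq C_k\Vert f_\norm\Vert_{C^k}$, let $Y_s$ solve $\iota_{Y_s}\big((sf_\norm+1-s)\mu\big)=-\zeta$ (positivity of $f$ guarantees $sf_\norm+1-s\geq\min f_\norm>0$, so $Y_s$ is well defined --- this is where your positivity remark actually belongs), and let $\psi$ be its time-one map, so that $\psi^*(f_\norm\mu)=\mu$. Then lift $Y_s$ horizontally via the Levi-Civita connection $\eta$ to a vector field $Z_s$ on $\ta^1M$ and let $\Psi$ be its time-one map; since $\Psi$ covers $\psi$, one gets $\Psi^*\big(\mathfrak p_\infty^*(f_\norm\mu)\big)=\mathfrak p_\infty^*\mu$ and hence $\tfrac{1}{f_\avg}\Psi^*\Omega_f-\Omega_\infty=\tfrac{1}{f_\avg}\Psi^*(\di\alpha_\can)$. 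Your quantitative picture of the final step is essentially right --- Gronwall control of $\di\Psi$ and its first two covariant derivatives yields the factor $\big(\langle f\rangle_3^4+\langle f\rangle_2^6\big)e^{C\langle f\rangle_1^2}$, which the $C$-strong hypothesis beats --- but it must be applied to the flow of $Z_s$, whose $C^k$-norms are bounded by $C_k\langle f\rangle_k^{k+1}$ via an inductive Leibniz-rule estimate on $\zeta/(sf_\norm+1-s)$, not to a fibrewise ODE.
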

\begin{proof}
We define
\begin{equation*}\label{eq:f_norm}
f_\norm:=\frac{f}{f_\avg}
\end{equation*}
and observe that there holds
\begin{equation*}
\min f_\norm\leq 1\leq \max f_\norm.
\end{equation*}
The two-form $(f_\norm-1)\mu$ is exact. By standard elliptic arguments (see for instance \cite[Chapter 10]{Nic07}), we can choose a primitive one-form $\zeta\in\Omega^1(M)$ of $(f_\norm-1)\mu$ such that
\begin{equation}\label{e:schau}
\Vert \zeta\Vert_{C^{k}}\leq C_k\Vert f_\norm-1\Vert_{C^{k}}\leq C_k\Vert f_\norm\Vert_{C^{k}},\qquad \forall\, k\in\N
\end{equation}
for some constant $C_k>0$ depending solely on $g$ and $k\in\N$.
For $s\in[0,1]$, let $\mu_s$ be the two-form given by $\mu_s:=q(f,s)\mu$, where $q(f,s):=sf_\norm+(1-s)$, and $Y_s$ be the time-dependent vector field defined through 
\begin{equation*}
\iota_{Y_s}\mu_s=-\zeta.
\end{equation*}
If $\psi:M\to M$ is the time-one map of $Y_s$, an application of Moser's trick yields
\begin{equation}\label{e:mosermag}
\psi^*(f_\norm\mu)=\mu.
\end{equation}
If $\sharp:\ta^*M\to \ta M$ is the metric duality and $\ast:\ta^*M\to\ta^*M$ the Hodge star operator, we can write $Y_s$ explicitly as 
\[
Y_s=\frac{\sharp\ast\zeta}{q(f,s)}.
\]
Since $\ast$ and $\sharp$ are smooth bundle maps, we have (possibly with bigger $C_k>0$)
\begin{equation}\label{e:ys}
\Vert Y_s\Vert_{C^{k}}\leq C_k\max_{s\in[0,1]}\Big\Vert \frac{\zeta}{q(f,s)}\Big\Vert_{C^{k}},\qquad\forall\, k\in\N.
\end{equation}

We claim that the following bound holds (possibly with bigger $C_k>0$):
\begin{equation}\label{e:fnormminfnorm}
\max_{s\in[0,1]}\Big\Vert \frac{\zeta}{q(f,s)}\Big\Vert_{C^{k}}\leq C_k\langle f_\norm \rangle _k^{k+1}=C_k\langle f\rangle_k^{k+1},\qquad \forall\, k\in\N.
\end{equation}
where the last equality is due to \eqref{e:rescale}. We prove the claim by induction and observe preliminarily that $q(f,s)\geq \min f_\norm$. For $k=0$, the estimate follows directly from \eqref{e:schau}. Suppose now that the estimate holds for all $k'\leq k-1$. Since
\[
\Big\Vert \frac{\zeta}{q(f,s)}\Big\Vert_{C^{k}}=\Big\Vert \frac{\zeta}{q(f,s)}\Big\Vert_{C^{k-1}}+\Big\Vert \nabla^{k}\frac{\zeta}{q(f,s)}\Big\Vert_{C^{0}},
\]
we just have to bound the second term. We apply the Leibniz rule to the $k$-th derivative of the product $q(f,s)\cdot \frac{\zeta}{q(f,s)}=\zeta$ and obtain
\begin{align*}
\nabla^{k}\Big(\frac{\zeta}{q(f,s)}\Big)=\frac{1}{q(f,s)}\bigg[\nabla^{k}\zeta-s\sum_{k'=0}^{k-1}\binom{k}{k'}\nabla^{k-k'}f_\norm\cdot\nabla^{k'}\frac{\zeta}{q(f,s)}\bigg],
\end{align*}
where we have used that $\nabla^{k-k'}q(f,s)=s\nabla^{k-k'}f_\norm$, since $k-k'\geq 1$. Consequently, we estimate using \eqref{e:schau} and \eqref{e:fnormminfnorm}
\begin{align*}
\Big\Vert \nabla^{k}\frac{\zeta}{q(f,s)}\Big\Vert_{C^{0}}&\leq \frac{1}{\min f_\norm}\bigg[C_{k}\Vert f_\norm\Vert_{C^{k}}+\sum_{k'=0}^{k-1}\binom{k}{k'}\Vert f_\norm\Vert_{C^{k}}C_{k-1}\langle f_\norm\rangle_{k-1}^{k}\bigg]\\
&\leq \frac{1}{\min f_\norm}C'_{k}\bigg(\Vert f_\norm\Vert_{C^{k}}+\frac{\Vert f_\norm\Vert_{C^{k}}^{k+1}}{(\min f_\norm)^{k}}\bigg)\\
&\leq C_{k}'\frac{\Vert f_\norm\Vert_{C^{k}}}{\min f_\norm}+C_{k}'\bigg(\frac{\Vert f_\norm\Vert_{C^{k}}}{\min f_\norm}\bigg)^{k+1}\\
&\leq (C_{k}'+1)\bigg(\frac{\Vert f_\norm\Vert_{C^{k}}}{\min f_\norm}\bigg)^{k+1}
\end{align*}
with some $C_k'>0$ depending only on $g$ and $k$. The claim is therefore established. 

Using the Levi-Civita connection for $\mathfrak p_\infty$, we lift $Y_s$ horizontally to $Z_s$ on $\ta^1 M$, so that $\di\mathfrak p_\infty(Z_s)=Y_s$. Since the lifting map $Y_s\mapsto Z_s$ is smooth and depends only on $g$, but not on $f$, there is a constant $C_k''>0$ depending on $k$ and $g$ such that
\begin{equation}\label{e:zsys}
\Vert Z_s\Vert_{C^{k}}\leq C_k''\Vert Y_s\Vert_{C^{k}},\qquad \forall\, k\in\N.
\end{equation}
The time-one map $\Psi:\ta^1M\to\ta^1M$ of $Z_s$ lifts the time-one map $\psi$ of $Y_s$, so that
\[
\Psi^*(\mathfrak p_\infty^*(f_\norm\mu))=\mathfrak p_\infty^*\mu,
\]
by \eqref{e:mosermag}. Putting together \eqref{e:ys}, \eqref{e:fnormminfnorm}, \eqref{e:zsys}, and Lemma \ref{l:diff2}, we get
\begin{equation*}
B_{2,2}\big(\Vert\di\Psi\Vert\big)\leq \Big(\langle f\rangle_3^4+\langle f\rangle_2^6\Big)e^{C_3\langle f\rangle_1^2},
\end{equation*}
for a (possibly bigger) constant $C_3>0$. Hence, using \eqref{eq:Ck_estimate} we estimate \begin{equation*}
\Vert\Psi^*(\di\alpha_{\mathrm{can}})\Vert_{C^2}\leq C_3''' B_{2,2}\big(\Vert\di\Psi\Vert\big)\Vert\di\alpha_{\mathrm{can}}\Vert_{C^2}\leq \Big(\langle f\rangle_3^4+\langle f\rangle_2^6\Big)e^{C_3\langle f\rangle_1^2},
\end{equation*}
where $C_3'''>0$ depends only on $g$ and where we take a bigger constant $C_3>0$ if necessary to incorporate $\Vert\di\alpha_\mathrm{can}\Vert_{C^2}$ and it is possible to bring the constant 
to the exponent since $\langle f\rangle_1^2\geq 1$. 

Let us suppose now that $f$ is $C$-strong for some positive number $C>0$. We compute 
\[
\tfrac{1}{f_\avg}\Psi^*\Omega_f-\Omega_\infty=\tfrac{1}{f_\avg}\Psi^*(\di\alpha_{\mathrm{can}})+\Psi^*(\mathfrak p_\infty^*(f_\norm \mu))-\mathfrak p_\infty^*\mu=\tfrac{1}{f_\avg}\Psi^*(\di\alpha_{\mathrm{can}}).
\]
Combining this identity with the bound for $\Vert\Psi^*(\di\alpha)\Vert_{C^2}$ found above, we arrive at
\begin{equation*}
\Big\Vert \tfrac{1}{f_\avg}\Psi^*\Omega_f-\Omega_\infty\Big\Vert_{C^2}=\tfrac{1}{f_\avg}\Vert\Psi^*(\di\alpha_{\mathrm{can}})\Vert_{C^2}\leq \frac{\big(\langle f\rangle_3^4+\langle f\rangle_2^6\big)e^{C_3\langle f\rangle^2_1}}{\big(\langle f\rangle_3^4+\langle f\rangle_2^6\big)e^{C\langle f\rangle^2_1}}= e^{(C_3-C)\langle f\rangle^2_1}\leq e^{C_3-C},
\end{equation*}
which can be made arbitrarily small, if $C$ is arbitrarily large. In particular, $\tfrac{1}{f_\avg}\Psi^*\Omega_f$ belongs to the given $C^2$-neighbourhood $\mathcal U$. 
\end{proof}

\subsection{A systolic-diastolic inequality for odd-symplectic forms}

The aim of this subsection is twofold. First, we give definitions and properties of the volume and the action of odd-symplectic forms. Then, we recall a local systolic-diastolic inequality for odd-symplectic forms on closed three-manifolds established in \cite{BK19}.

\subsubsection*{Weakly Zoll pairs}
Consider the space of all oriented $S^1$-bundles $\mathfrak p:\ta^1M\to M_{\mathfrak p}$ with total space $\ta^1 M$, where $M_{\mathfrak p}$ is some closed oriented surface (diffeomorphic to $M$). Let $\mathfrak P^0(\ta^1M)$ be the connected component of such a space containing $\mathfrak p_\infty:\ta^1M\to M$. A pair $(\mathfrak p,c)$, where $\mathfrak p\in\mathfrak P^0(\ta^1M)$ and $c\in H^2_\mathrm{dR}(M_{\mathfrak p})$ is called a weakly Zoll pair. A closed two-form $\Omega$ on $\ta^1M$ is said to be associated with $(\mathfrak p,c)$, if $\Omega=\mathfrak p^*\om$ for some closed two-form $\om$ on $M_{\mathfrak p}$ satisfying $[\om]=c$. As discussed above, every Zoll form $\Omega$ canonically defines a weakly Zoll pair $(\mathfrak p_\Omega,[\om])$. For example, the Zoll form $\Omega_\infty=\mathfrak p_\infty^*\mu$ is associated with the weakly Zoll pair  $(\mathfrak p_\infty,[\mu])$. 

Let $\mathfrak Z_{[\Omega_\infty]}^0(\ta^1M)$ be the set of all weakly Zoll pairs $(\mathfrak p,c)$ such that
\[
\mathfrak p\in\mathfrak P^0(\ta^1 M),\qquad \mathfrak p^*c=[\Omega_\infty]\in H^2_\dR(\ta^1M).
\] 
Below, we define and compute volume, action, and Zoll polynomial with respect to some fixed reference weakly Zoll pair 
\[
(\mathfrak p_\infty,c_0)\in\mathfrak Z_{[\Omega_\infty]}^0(\ta^1M).
\] 
As we specify in the next subsection, we take different reference pairs for $M\neq\T^2$ and for $M=\T^2$. This will enable us to simplify computations. However, as observed in \cite[Remark 1.17]{BK19}, a different choice results in different volume, action, and Zoll polynomial but in an equivalent systolic-diastolic inequality.
\subsubsection*{Volume}

We pick any closed form $\om_0$ on $M$ with $[\om_0]=c_0$ and set 
\[
\Omega_0=\mathfrak p_\infty^*\om_0.
\] 
Let $\Omega$ be a closed two-form on $\ta^1M$ with the same cohomology class as $\Omega_0$. We choose a one-form $\alpha$ on $\ta^1M$ such that $\Omega=\Omega_0+\di\alpha$. 
The volume of $\alpha$ is defined by
\[
\Vol(\alpha)=\frac{1}{2}\int_{\ta^1M}\alpha\wedge\di\alpha+\int_\Sigma\alpha\wedge\Omega_0.
\]
As seen in Section \ref{ss:unit}, $\mathfrak p_\infty^*:H^2_\mathrm{dR}(M)\to H^2_\mathrm{dR}(\ta^1M)$ vanishes when  $M\neq\T^2$, and thus $[\Omega_0]=0$. In this case $\Vol(\alpha')=\Vol(\alpha)$ for any $\alpha'$ satisfying $\di\alpha'=\di\alpha$. Therefore, we define the volume by
\[
\Fvol(\Omega)=\Vol(\alpha).
\]
By \cite[Proposition 2.8]{BK19}, if $\Psi$ is a diffeomorphism on $\ta^1M$ isotopic to the identity, then
\begin{equation}\label{eq:invariance_vol}
\Fvol(\Psi^*\Omega)=\Fvol(\Omega)
\end{equation}
If $M=\T^2$, then it can happen that $\di\alpha'=\di\alpha$ but $\Vol(\alpha')\neq\Vol(\alpha)$. In this case, we can choose $\alpha$ such that $\Vol(\alpha)=0$. Such a one-form is called {\bf normalised} and we declare
\[
\Fvol(\Omega)=0.
\]

\subsubsection*{Action}

We define the {action} on the space $\Lambda_{\mathfrak h_\infty}(\ta^1M)$ of one-periodic curves in the free homotopy class $\mathfrak h_\infty\in[S^1,\ta^1M]$ of $\mathfrak p_\infty$-fibres by 
\[
{\mathcal A}_\alpha:\Lambda_{\mathfrak h_\infty}(\ta^1M)\to\R,\qquad  \gamma\mapsto \int_{S^1}\gamma_0^*\alpha+\int_{[0,1]\x S^1}\Gamma^*\Omega.
\]
where $\Gamma:[0,1]\x S^1\to\ta^1M$ is any cylinder such that $\Gamma(1,\cdot)=\gamma$ and $\Gamma(0,\cdot)=\gamma_0$ is any oriented $\mathfrak p_\infty$-fibre. This action does not depend on the choice of $\om_0$ nor of $\Gamma$.
Moreover,  a critical point of $\mathcal A_\alpha$ is a closed characteristic of $\Omega$, i.e.~a closed curve tangent to the distribution $\ker\Omega$. We denote by $\mathcal X(\Omega)$ the set of embedded closed characteristics of $\Omega$.

In order to define the action with respect to $\Omega$, we observe that if $\alpha'$ is another one-form on $\ta^1M$ such that $\Omega=\Omega_0+\di\alpha'$, then
\[
\mathcal A_{\alpha'}=\mathcal A_\alpha+\int_{\mathfrak p_\infty^{-1}\rm(pt)}(\alpha'-\alpha)
\]
where $\mathfrak p_\infty^{-1}\rm(pt)$ is any fibre of $\mathfrak p_\infty$. 
 When $M\neq\T^2$, the homology class of $\mathfrak p_\infty^{-1}\rm(pt)$ is zero, and therefore we can simply set 
\[
\mathcal A_\Omega:=\mathcal A_\alpha.
\]
If $M=\T^2$, the actions $\mathcal A_\alpha$ and $\mathcal A_{\alpha'}$ might be different. Nevertheless it turns out that if $\alpha$ and $\alpha'$ have the same volume, they have the same action. In this case, we choose a normalised one-form $\alpha$, i.e.~$\Vol(\alpha)=0$ and set
\[
\mathcal A_\Omega:=\mathcal A_\alpha.
\]
In both cases, by \cite[Proposition 6.10]{BK19}, if $\Psi$ is a diffeomorphism on $\ta^1M$ isotopic to the identity, then
\begin{equation}\label{eq:invariance_action}
\mathcal A_{\Psi^*\Omega}(\gamma)=\mathcal A_{\Omega}(\Psi(\gamma)).
\end{equation}


\subsubsection*{Zoll polynomial}

The {Zoll polynomial} $P:\R\to\R$  is defined by
\begin{equation}\label{eq:Zoll_polynomial}
P(A)=\langle e,[M]\rangle\frac{A^2}{2}+\langle c_0,[M]\rangle A.
\end{equation}

For $(\mathfrak p,c)\in\mathfrak Z^0_{[\Omega_\infty]}(\ta^1M)$, we choose any closed two-form $\om$ on $M_{\mathfrak p}$ with $[\om]=c$ and define the volume and the action of $(\mathfrak p,c)$ by
\begin{equation}\label{eq:vol_action_weak_Zoll}
\Fvol(\mathfrak p,c)=\Fvol(\mathfrak p^*\om),\qquad \mathcal A(\mathfrak p,c)=\mathcal A_{\mathfrak p^*\om}(\mathfrak p^{-1}({\rm pt})).
\end{equation}
Note that since $\mathcal A(\mathfrak p_\infty,c_0)=0$, there holds $\frac{\di P}{\di A}(\mathcal A(\mathfrak p_\infty,c_0))=\langle c_0,[M]\rangle$. More generally 
it is shown in \cite[Proposition 6.18]{BK19} that for any weakly Zoll pair $(\mathfrak p,c)\in \mathfrak Z^0_{[\Omega_\infty]}(\ta^1M)$
\begin{equation}\label{eq:derivative_Zoll_polynomial}
\frac{\di P}{\di A}(\mathcal A(\mathfrak p,c))=\langle c,[M_{\mathfrak p}]\rangle.
\end{equation}

The following result relates the action and the volume of a weakly Zoll pair through the Zoll polynomial. It can be thought as the equality case of the local systolic-diastolic inequality presented below. 
\begin{thm}{\cite[Theorem 1.14]{BK19}}\label{thm:Zoll_equality}
There holds
\[
P(\mathcal A(\mathfrak p,c))=\Fvol(\mathfrak p,c),\qquad \forall\,(\mathfrak p,c)\in\mathfrak Z^0_{[\Omega_\infty]}(\ta^1M).
\]
When $M=\T^2$, this is equivalent to $\mathcal A(\mathfrak p,c)=0$, $\forall\,(\mathfrak p,c)\in\mathfrak Z^0_{[\Omega_\infty]}(\ta^1M)$.
\end{thm}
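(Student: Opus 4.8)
The plan is to compute both sides of the asserted identity directly for an arbitrary weakly Zoll pair $(\mathfrak p, c)\in\mathfrak Z^0_{[\Omega_\infty]}(\ta^1M)$ and check they agree. First I would pick a closed two-form $\om$ on $M_{\mathfrak p}$ with $[\om]=c$ and set $\Omega=\mathfrak p^*\om$, so that $[\Omega]=[\Omega_\infty]=[\Omega_0]$. To compute $\Fvol(\mathfrak p,c)$ and $\mathcal A(\mathfrak p,c)$, I need a one-form $\alpha$ on $\ta^1M$ with $\Omega=\Omega_0+\di\alpha$. The key geometric input is that $\mathfrak p$ is connected to $\mathfrak p_\infty$ inside $\mathfrak P^0(\ta^1M)$; using the Levi-Civita-type connection one-form $\theta$ for $\mathfrak p$ (normalised by $\theta(V_{\mathfrak p})=1$, where $V_{\mathfrak p}$ generates the $\mathfrak p$-fibre $S^1$-action), one has $\di\theta=\mathfrak p^*(\text{curvature})$, and $\mathfrak p^*\om$ can be written as $\langle c,[M_{\mathfrak p}]\rangle\,\di\theta$ plus the pullback of an exact form on $M_{\mathfrak p}$, which contributes nothing after pulling back by $\mathfrak p$ when combined appropriately — more precisely, since $\mathfrak p^*: H^2_\dR(M_{\mathfrak p})\to H^2_\dR(\ta^1M)$ has the Gysin description, $\Omega - \langle c,[M_{\mathfrak p}]\rangle\,\di\theta/\langle e_{\mathfrak p},[M_{\mathfrak p}]\rangle$ is exact on $\ta^1M$. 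Since $\langle e_{\mathfrak p},[M_{\mathfrak p}]\rangle=\chi(M)=\langle e,[M]\rangle$ by Proposition \ref{p:magzollintro}(a), this lets me write $\alpha$ explicitly in terms of $\theta$ and the reference data.

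Next I would compute $\mathcal A(\mathfrak p,c)=\mathcal A_{\Omega}(\mathfrak p^{-1}(\pt))$ using a cylinder $\Gamma$ interpolating from a $\mathfrak p_\infty$-fibre to a $\mathfrak p$-fibre, which exists since $\mathfrak p$ and $\mathfrak p_\infty$ lie in the same component of $\mathfrak P^0(\ta^1M)$ (so the fibre classes agree, $\mathfrak h_{\mathfrak p}=\mathfrak h_\infty$). The integral $\int_{[0,1]\times S^1}\Gamma^*\Omega$ and the boundary term $\int_{S^1}\gamma_0^*\alpha$ can be evaluated by Stokes and by the normalisation of $\theta$, yielding $\mathcal A(\mathfrak p,c)$ as a linear expression in $\langle c_0,[M]\rangle$ and $\langle c,[M_{\mathfrak p}]\rangle$. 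Comparing with \eqref{eq:derivative_Zoll_polynomial}, which is quoted from \cite[Proposition 6.18]{BK19} and says $\frac{\di P}{\di A}(\mathcal A(\mathfrak p,c))=\langle c,[M_{\mathfrak p}]\rangle$, i.e. $\langle e,[M]\rangle\,\mathcal A(\mathfrak p,c)+\langle c_0,[M]\rangle=\langle c,[M_{\mathfrak p}]\rangle$, I can solve for $\mathcal A(\mathfrak p,c)$ explicitly when $\langle e,[M]\rangle=\chi(M)\neq 0$ (i.e. $M=S^2$), and handle $M=\T^2$ separately where $\chi=0$ forces $\langle c,[\T^2_{\mathfrak p}]\rangle=\langle c_0,[\T^2]\rangle$ and the claim reduces to $\mathcal A(\mathfrak p,c)=0$, which follows because $[\Omega_\infty]$ restricted to a $\mathfrak p$-fibre pairs to zero (the fibre is null-homologous up to the torus relations) together with the normalisation $\Vol(\alpha)=0$ in the $\T^2$ case.

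Finally I would compute $\Fvol(\mathfrak p,c)=\Fvol(\mathfrak p^*\om)=\Vol(\alpha)=\tfrac12\int_{\ta^1M}\alpha\wedge\di\alpha+\int_\Sigma\alpha\wedge\Omega_0$ using the explicit $\alpha$ built from $\theta$. The term $\int_{\ta^1M}\alpha\wedge\di\alpha$ is quadratic in $\langle c,[M_{\mathfrak p}]\rangle$ and, after using $\di\alpha=\Omega-\Omega_0$ and $\theta\wedge\di\theta$ integrating over the fibre against the base to give the Euler number $\chi(M)$ (this is the analogue of \eqref{eq:alpha_can}), it produces $\langle e,[M]\rangle\,\mathcal A(\mathfrak p,c)^2/2$; the linear term $\int_\Sigma\alpha\wedge\Omega_0$ produces $\langle c_0,[M]\rangle\,\mathcal A(\mathfrak p,c)$, matching $P(\mathcal A(\mathfrak p,c))$ exactly. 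The main obstacle I anticipate is bookkeeping the exact-form corrections and the precise normalisations of $\theta$, $\Sigma$, and the reference forms so that the fibre-integration identities come out with the correct constants, rather than anything conceptually deep; in fact the cleanest route may be to avoid recomputing from scratch and instead invoke \eqref{eq:derivative_Zoll_polynomial} to pin down $\mathcal A(\mathfrak p,c)$ and then verify by a one-parameter argument along the path $\{\mathfrak p_r\}$ that $t\mapsto P(\mathcal A(\mathfrak p_t,c_t))-\Fvol(\mathfrak p_t,c_t)$ is constant and vanishes at $\mathfrak p_\infty$, using that $P'(\mathcal A)\,\di\mathcal A=\langle c,[M_{\mathfrak p}]\rangle\,\di\mathcal A=\di\Fvol$ along such deformations.
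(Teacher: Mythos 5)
The paper gives no proof of this statement at all: Theorem \ref{thm:Zoll_equality} is imported verbatim from \cite[Theorem 1.14]{BK19}, so there is no internal argument to compare yours against. Judged on its own terms, your direct-computation strategy for $M\neq\T^2$ is viable and is essentially the general-coefficient version of what the paper \emph{does} carry out for the special forms $\Omega_{f_*}$ in Lemma \ref{l:actvolchineq0} and Corollary \ref{c:sign}: write $\mathfrak p^*\om$ as $\tfrac{\langle c,[M_{\mathfrak p}]\rangle}{\chi(M)}\,\di\theta$ plus an exact pullback, integrate over the fibre, and match the quadratic and linear terms of $P$.

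Two steps, however, are genuinely gapped. First, your ``cleanest route'' rests on the identity $\di\Fvol=\langle c,[M_{\mathfrak p}]\rangle\,\di\mathcal A$ along the path $\{\mathfrak p_t\}$, which you assert but do not derive; this first-variation formula is not a formal consequence of \eqref{eq:derivative_Zoll_polynomial} (that equation constrains $\mathcal A$ alone and says nothing about $\Fvol$), and it is really the analytic heart of the theorem --- without an independent proof of it the deformation argument is circular. Second, the $\T^2$ case is wrong as justified: the $\mathfrak p$-fibre in $\ta^1\T^2\cong\T^3$ is \emph{not} null-homologous --- it is a primitive class in $H_1(\ta^1\T^2;\Z)$, a fact the paper itself exploits in the proof of Proposition \ref{p:magzollintro} for the torus --- so ``$[\Omega_\infty]$ restricted to a fibre pairs to zero'' cannot be the reason that $\mathcal A(\mathfrak p,c)=0$. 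Establishing $\mathcal A(\mathfrak p,c)=0$ requires an honest computation with a normalised primitive, i.e.\ one with $\Vol(\alpha)=0$, in the spirit of Lemmas \ref{l:acvolchi0} and \ref{l:actorus}; this is precisely where the normalisation convention enters and it cannot be waved away. (A smaller slip: $\chi(M)\neq 0$ does not mean $M=S^2$; higher-genus surfaces also have $\chi(M)\neq0$ and must be covered by the same computation, with attention to the sign of $\chi(M)$.)
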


\subsubsection*{The general inequality}
Let $\Omega_*$ be a Zoll form, which is associated with  a weakly Zoll pair $(\mathfrak p_1,c_1)\in \mathfrak Z^0_{[\Omega_\infty]}(\ta^1M)$. In our applications $\Omega_*$ will be either $\Omega_{f_*}$ for some Zoll function $f_*$ or $\Omega_\infty$. We fix a finite open covering $\{B_i\}$ of $M_1$ by balls so that all their pairwise intersections are contractible. Let $\Lambda(\mathfrak p_1)$ be the space of curves $\gamma\in\Lambda_{\mathfrak h_\infty}(\ta^1M)$ such that $\mathfrak p_1(\gamma)$ is contained in some $B_i$.
Abbreviating $\mathcal X(\Omega;\mathfrak p_1):=\mathcal X(\Omega)\cap\Lambda(\mathfrak p_1)$, we define
\[
\mathcal A_{\min}(\Omega):=\inf_{\gamma\in\mathcal X(\Omega;\mathfrak p_1)}\mathcal A_{\Omega}(\gamma),\qquad \mathcal A_{\max}(\Omega):=\sup_{\gamma\in\mathcal X(\Omega;\mathfrak p_1)}\mathcal A_{\Omega}(\gamma).
\]
By \cite[Section III]{Gin87}, if an odd-symplectic form $\Omega$ is such that $\Omega-\Omega_*$ is an exact $C^1$-close two-form, the set $\mathcal X(\Omega)\cap\Lambda(\mathfrak p_1)$ is compact and non-empty. Therefore, the numbers $\mathcal A_{\min}(\Omega)$ and $\mathcal A_{\max}(\Omega)$ are finite and they can be shown to vary $C^1$-continuously with $\Omega$. We finally state the local systolic-diastolic inequality for odd-symplectic forms.
\begin{thm}{\cite[Corollary 1.21]{BK19}}\label{thm:lsios}
There exists a $C^{2}$-neighbourhood $\mathcal U$ of $\Omega_*$ in the set of odd-symplectic forms on $\ta^1M$ with cohomology class $[\Omega_*]$ such that
\begin{equation*}
P(\mathcal A_{\min}(\Omega))\leq\Fvol(\Omega)\leq P(\mathcal A_{\max}(\Omega)),\qquad \forall\,\Omega\in \mathcal U.
\end{equation*}
Moreover the equality holds in any of the two inequalities exactly when $\Omega$ is Zoll. When $M=\T^2$, the inequality simplifies to 
\[
\mathcal A_{\min}(\Omega)\leq 0\leq \mathcal A_{\max}(\Omega).
\]
\end{thm}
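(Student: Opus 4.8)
The plan is to reduce the statement to a fact about area‑preserving maps close to the identity, in the spirit of the usual proofs of local systolic‑diastolic inequalities. Write $\Omega_*=\mathfrak p_1^*\omega_1$ with $[\omega_1]=c_1$; its oriented characteristic foliation is the fibration $\mathfrak p_1$ into circles of one common period. For $\Omega$ in a small $C^2$‑neighbourhood of $\Omega_*$ with $[\Omega]=[\Omega_*]$, choose a $C^2$‑small primitive $\alpha$ with $\Omega=\Omega_*+\di\alpha$; then $\ker\Omega$ is $C^1$‑close to the vertical line field of $\mathfrak p_1$, so every closed characteristic of $\Omega$ in the class $\mathfrak h_\infty$ is $C^1$‑close to a $\mathfrak p_1$‑fibre, hence $\mathfrak p_1$‑projects into some $B_i$ and lies in $\Lambda(\mathfrak p_1)$. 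Thus $\mathcal X(\Omega;\mathfrak p_1)$ really does collect all the relevant orbits, and by \cite[Section III]{Gin87} it is compact, non‑empty and $C^1$‑continuous in $\Omega$, so $\mathcal A_{\min}(\Omega)$ and $\mathcal A_{\max}(\Omega)$ are finite and continuous. Over each $B_i$, where $\mathfrak p_1$ is trivial, the characteristic flow has a first‑return map to a local transversal disc; since $\Omega$ is closed, holonomy preserves the transverse restriction of $\Omega$, so these return maps are area‑preserving and $C^1$‑close to the identity, and patching them over $\{B_i\}$ gives the data controlling the three quantities.

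The core is a mean‑value identity extending Theorem \ref{thm:Zoll_equality}. I would attach to $\Omega$ a \emph{mean action} $\bar{\mathcal A}(\Omega)$ — morally the $\omega_1$‑weighted average of $\mathcal A_\Omega$ over the (near‑$S^1$) oriented leaves of $\ker\Omega$, agreeing with $\mathcal A(\mathfrak p_1,c_1)$ on the Zoll locus — and prove that $\Fvol(\Omega)=P(\bar{\mathcal A}(\Omega))$ throughout the neighbourhood. This should follow by a deformation argument: along $\Omega_s=\Omega_*+\di\alpha_s$ one has the first‑variation formula $\tfrac{\di}{\di s}\Fvol(\Omega_s)=\int_{\ta^1M}\dot\alpha_s\wedge\Omega_s$ (a short Stokes computation), and writing $\dot\alpha_s\wedge\Omega_s=(\dot\alpha_s(R_s))\,V$ for the kernel generator $R_s$ normalised by a reference volume form $V$ via $\iota_{R_s}V=\Omega_s$ and applying Fubini along the flow of $R_s$, the right‑hand side becomes a weighted average of $\int_L\dot\alpha_s=\tfrac{\di}{\di s}\mathcal A_{\Omega_s}(L)$ over the leaves $L$; comparing this with $\tfrac{\di}{\di s}P(\bar{\mathcal A}(\Omega_s))=\tfrac{\di P}{\di A}(\bar{\mathcal A})\,\tfrac{\di}{\di s}\bar{\mathcal A}$ and with the normalisation $\tfrac{\di P}{\di A}(\mathcal A(\mathfrak p_1,c_1))=\langle c_1,[M_1]\rangle$ from \eqref{eq:derivative_Zoll_polynomial}, the two derivatives match, and since they agree at $s=0$ by Theorem \ref{thm:Zoll_equality}, the identity $\Fvol(\Omega)=P(\bar{\mathcal A}(\Omega))$ propagates.

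It then remains to sandwich $\bar{\mathcal A}(\Omega)$ and take $P$. The fixed points of the patched return maps are exactly the closed characteristics in $\mathcal X(\Omega;\mathfrak p_1)$, and they are the critical points of the associated action function on the (almost‑)leaf space; since a weighted average of a function lies between its extreme values and, on a compact domain, those are attained at critical points, one gets $\mathcal A_{\min}(\Omega)\le\bar{\mathcal A}(\Omega)\le\mathcal A_{\max}(\Omega)$. The polynomial $P$ is strictly increasing near $\mathcal A(\mathfrak p_1,c_1)$, because $\tfrac{\di P}{\di A}(\mathcal A(\mathfrak p_1,c_1))=\langle c_1,[M_1]\rangle>0$ ($c_1$ being the class of a positive symplectic form on $M_1$); applying $P$ and using $\Fvol(\Omega)=P(\bar{\mathcal A}(\Omega))$ gives $P(\mathcal A_{\min}(\Omega))\le\Fvol(\Omega)\le P(\mathcal A_{\max}(\Omega))$. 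For $M=\T^2$ one has $\langle e,[M]\rangle=\chi(\T^2)=0$, so $P$ is affine with positive slope, while $\Fvol\equiv0$ by the normalisation convention and $\mathcal A(\mathfrak p_1,c_1)=0$; the same mean‑value identity forces $\bar{\mathcal A}(\Omega)=0$ and the inequality collapses to $\mathcal A_{\min}(\Omega)\le0\le\mathcal A_{\max}(\Omega)$. Finally, equality in either inequality forces, by strict monotonicity of $P$, the relevant extreme value of the action function to equal its weighted average, hence the action function to be constant; this makes all the return maps the identity, so the characteristic flow is periodic, $\ker\Omega$ integrates to an $S^1$‑bundle, and $\Omega$ is Zoll.

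The step I expect to be hardest is the mean‑value identity $\Fvol(\Omega)=P(\bar{\mathcal A}(\Omega))$. One must define $\bar{\mathcal A}(\Omega)$ intrinsically rather than through a chosen path or transversal, cope with the fact that for $\Omega\neq\Omega_*$ the characteristic foliation need not be a fibration and that $\mathfrak p_1$ need not be globally trivial (e.g.\ $\ta^1S^2=\mathbb{RP}^3$ admits no global transversal, so the return‑map picture is only local), and carry out the Stokes–Fubini bookkeeping so that the identity is \emph{exact} and not merely approximate — in particular so that the $s$‑dependent transverse weight $\rho_s$ and the fixed weight used to define $\bar{\mathcal A}$ produce matching first variations. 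The $C^1$‑compactness and continuity of $\mathcal X(\Omega;\mathfrak p_1)$ quoted from \cite{Gin87} is what keeps $\mathcal A_{\min},\mathcal A_{\max}$ well‑behaved and what powers the limiting step in the rigidity argument.
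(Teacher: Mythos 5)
This theorem is not proved in the present paper: it is imported verbatim from \cite[Corollary 1.21]{BK19}, so there is no internal proof to compare your argument against. Judged on its own terms, your outline reproduces the standard strategy for local systolic--diastolic inequalities (return maps of the characteristic flow on local transversals, an exact ``volume equals $P$ of an averaged action'' identity, sandwiching the average between critical values, strict monotonicity of $P$ near $\mathcal A(\mathfrak p_1,c_1)$ via \eqref{eq:derivative_Zoll_polynomial}), and the peripheral steps you give are sound: the first-variation formula $\tfrac{\di}{\di s}\Fvol(\Omega_s)=\int\dot\alpha_s\wedge\Omega_s$ is a correct Stokes computation, closed characteristics of a $C^2$-small perturbation do land in $\Lambda(\mathfrak p_1)$, and the reduction of the equality case to ``the action function is constant, hence the return maps are the identity'' is the right endgame.

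The genuine gap is exactly where you suspect it: the mean action $\bar{\mathcal A}(\Omega)$ and the identity $\Fvol(\Omega)=P(\bar{\mathcal A}(\Omega))$. For a non-Zoll $\Omega$ the oriented leaves of $\ker\Omega$ are generically \emph{not} closed, so ``the $\omega_1$-weighted average of $\mathcal A_\Omega$ over the leaves'' is not defined, the quantities $\int_L\dot\alpha_s$ and $\mathcal A_{\Omega_s}(L)$ in your Fubini step have no meaning, and the deformation argument cannot even be set up leaf by leaf: Fubini along the flow of $R_s$ produces ergodic averages, not integrals over circles. The object that replaces $\bar{\mathcal A}$ is the average (a Calabi-type invariant) of the action/generating function of the first-return map on a transversal, and making that single-valued, identifying its critical points with fixed points and its critical values with $\mathcal A_\Omega$ of the corresponding closed characteristics, patching it consistently over the cover $\{B_i\}$ (there is no global transversal, e.g.\ on $\ta^1S^2=\mathbb{RP}^3$), and proving the \emph{exact} equality with $\Fvol$ through $P$ is the technical heart of \cite{BK19}; it requires a Moser-type normal form for $\Omega$ relative to $\mathfrak p_1^*\omega_1$ and is precisely where the hypotheses $[\Omega]=[\Omega_*]$ and, for $M=\T^2$, the normalised choice of primitive enter. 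Without that construction your second paragraph is a plausibility argument (``the two derivatives match'') rather than a proof, and the third paragraph, which consumes the identity, is left without foundation. So the proposal is a correct roadmap of the known approach, but its central step is missing.
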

\subsection{Volume and action of magnetic functions}\label{subsec:vol_action_magnetic}

\subsubsection*{Case $M\neq\T^2$}
As observed in Section \ref{ss:unit}, the two-forms $\Omega_\infty$ and $\Omega_f$, where $f:M\to\R$ is any function, are exact. Explicit primitives are given by 
\begin{equation*}\label{dfn:alphaf}
\alpha_\infty:= \frac{\area(M)}{\chi(M)}(\eta+\mathfrak p_\infty^*\zeta_\infty),\qquad \alpha_f:=\alpha_\can+\frac{\area(M)}{\chi(M)}(f_\avg\eta+\mathfrak p_\infty^*\zeta),
\end{equation*}
where $\zeta$ and $\zeta_\infty$ are one-forms on $M$ with differential 
\[
\di\zeta_\infty=\Big(\frac{\chi(M)}{\area(M)}-\frac{K}{2\pi}\Big)\mu,\qquad \di\zeta=\Big(\frac{\chi(M)}{\area(M)}f-\frac{f_\avg\cdot K}{2\pi}\Big)\mu.
\]
We choose as reference weakly Zoll pair 
\[
(\mathfrak p_\infty,c_0)=(\mathfrak p_\infty,0)\in \mathfrak Z_{[\Omega_\infty]}^0(\ta^1M).
\]
From formula \eqref{eq:Zoll_polynomial} and identity \eqref{e:state}, we have the Zoll polynomial
\begin{equation}\label{e:polneqt}
P(A)=\frac{\chi(M)}{2}A^2.
\end{equation}

Let $\Omega$ be an exact two-form on $\ta^1M$, and let $\alpha$ be an arbitrary primitive one-form of $\Omega$. In this case the volume of $\Omega$ is reduced to
\[
\Fvol(\Omega)=\frac{1}{2}\int_{\ta^1M}\alpha\wedge\Omega.
\]
and the action of $\Omega$ is given by
\[
 \mathcal A_{\Omega}(\gamma)=\int_{S^1}\gamma^*\alpha,\qquad \forall\,\gamma\in\Lambda_{\mathfrak h_\infty}(\ta^1M).
\]
We note that the volume is two-homogeneous while the action is one-homogeneous. Namely,
\begin{equation}\label{eq:homogeneity}
\Fvol(s\Omega)=s^2\Fvol(\Omega),\qquad \mathcal A_{s\Omega}=s\mathcal A_{\Omega},\qquad \forall\,s\in\R.
\end{equation}
\begin{lem}\label{l:actvolchineq0}
If $M\neq\T^2$ and $f:M\to \R$ is a function, we have
\begin{equation*}
\Fvol(\Omega_\infty)=\frac{\area(M)^2}{2\chi(M)},\qquad\Fvol(\Omega_f)=\frac{\area(M)^2}{2\chi(M)}K_f.
\end{equation*}
If $\gamma_0:S^1\to \ta^1 M$ is an oriented fibre of  $\mathfrak p_\infty$ and $c\in\Lambda(M; \mathfrak h_\infty)$, then
\begin{equation*}
\mathcal A_{\Omega_\infty}(\gamma_0)=\frac{\area(M)}{\chi(M)},\qquad \mathcal A_{\Omega_f}(c,\dot c)=\ell_f(c)+\frac{\area(M)\cdot f_\avg}{\chi(M)}.
\end{equation*}
\end{lem}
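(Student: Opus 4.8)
The plan is to compute the four quantities directly from the explicit primitives $\alpha_\infty$ and $\alpha_f$ given just above the statement, using the formulas $\Fvol(\Omega)=\tfrac12\int_{\ta^1M}\alpha\wedge\Omega$ and $\mathcal A_\Omega(\gamma)=\int_{S^1}\gamma^*\alpha$ valid in the case $M\neq\T^2$ with the reference pair $(\mathfrak p_\infty,0)$. For the action statements, I would first treat $\gamma_0$: since $\gamma_0$ is an oriented $\mathfrak p_\infty$-fibre, $\mathfrak p_\infty\circ\gamma_0$ is constant, so $\gamma_0^*(\mathfrak p_\infty^*\zeta_\infty)=0$ and $\int_{S^1}\gamma_0^*\eta=1$ by \eqref{eq:Levi_Civita}, whence $\mathcal A_{\Omega_\infty}(\gamma_0)=\tfrac{\area(M)}{\chi(M)}$. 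For $(c,\dot c)$ with $c\in\Lambda(M;\mathfrak h_\infty)$, I would write $\mathcal A_{\Omega_f}(c,\dot c)=\int_{S^1}(c,\dot c)^*\alpha_\can+\tfrac{\area(M)}{\chi(M)}\big(f_\avg\int_{S^1}(c,\dot c)^*\eta+\int_{S^1}c^*\zeta\big)$. The first term is $\ell(c)$ by the formula for Riemannian length from Section \ref{ss:unit}; for the remaining two terms I would use an admissible capping cylinder $\Gamma$ with $\Gamma(1,\cdot)=(c,\dot c)$ and $\Gamma(0,\cdot)=\gamma_0$ an oriented fibre, and apply Stokes to $\int_{[0,1]\times S^1}\Gamma^*\di\eta$ and $\int_{[0,1]\times S^1}\Gamma^*(\mathfrak p_\infty^*\di\zeta)$. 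Using \eqref{eq:Levi_Civita} and the definition $\di\zeta=(\tfrac{\chi(M)}{\area(M)}f-\tfrac{f_\avg K}{2\pi})\mu$ together with $\int_{[0,1]\times S^1}\Gamma^*(\mathfrak p_\infty^*(f\mu))=\ell_f(c)-\ell(c)$ from \eqref{eq:length_Gamma} and $\int_{[0,1]\times S^1}\Gamma^*(\mathfrak p_\infty^*(K\mu))=2\pi(\int_{S^1}(c,\dot c)^*\eta-1)$, the $\eta$ and $\zeta$ contributions combine so that everything reduces to $\ell_f(c)+\tfrac{\area(M)f_\avg}{\chi(M)}$ after the $f_\avg$-weighted $\eta$-terms cancel against the $-\tfrac{f_\avg K}{2\pi}\mu$ part of $\di\zeta$.

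For the volumes, I would compute $\Fvol(\Omega_\infty)=\tfrac12\int_{\ta^1M}\alpha_\infty\wedge\Omega_\infty=\tfrac12\big(\tfrac{\area(M)}{\chi(M)}\big)\int_{\ta^1M}(\eta+\mathfrak p_\infty^*\zeta_\infty)\wedge\mathfrak p_\infty^*\mu$. Here $\mathfrak p_\infty^*\zeta_\infty\wedge\mathfrak p_\infty^*\mu=0$ for dimension reasons (both are pulled back from the surface), and $\int_{\ta^1M}\eta\wedge\mathfrak p_\infty^*\mu=\area(M)$ by integration over the fibres using $\eta(V)=1$, giving $\Fvol(\Omega_\infty)=\tfrac{\area(M)^2}{2\chi(M)}$. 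For $\Fvol(\Omega_f)=\tfrac12\int_{\ta^1M}\alpha_f\wedge\Omega_f$ with $\Omega_f=\di\alpha_\can+\mathfrak p_\infty^*(f\mu)$, I would expand the product of $\alpha_f=\alpha_\can+\tfrac{\area(M)}{\chi(M)}(f_\avg\eta+\mathfrak p_\infty^*\zeta)$ against $\Omega_f$ into several terms and discard those that vanish by type: terms of the form $(\text{pullback 1-form})\wedge(\text{pullback 2-form})$ and $\mathfrak p_\infty^*(f\mu)\wedge\mathfrak p_\infty^*(\cdot)$ are zero. The surviving terms are $\tfrac12\int\alpha_\can\wedge\di\alpha_\can$, $\tfrac{\area(M)f_\avg}{2\chi(M)}\int\eta\wedge\di\alpha_\can$ (up to also keeping $\tfrac12\int\alpha_\can\wedge\mathfrak p_\infty^*(f\mu)$ and the $\zeta$-term paired with $\di\alpha_\can$), which I would evaluate using \eqref{eq:alpha_can}, i.e. $\alpha_\can\wedge\di\alpha_\can=2\pi\eta\wedge\mathfrak p_\infty^*\mu$, and \eqref{eq:Levi_Civita}. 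After collecting, the answer should assemble into $\tfrac{\area(M)^2}{2\chi(M)}\big((f_\avg)^2+\tfrac{2\pi\chi(M)}{\area(M)}\big)=\tfrac{\area(M)^2}{2\chi(M)}K_f$, where the term $\tfrac{2\pi\chi(M)}{\area(M)}$ comes precisely from the $\int\alpha_\can\wedge\di\alpha_\can$ contribution via Gauss--Bonnet (or equivalently from $[\Omega_f]=f_\avg[\Omega_\infty]$ and two-homogeneity combined with the $\di\alpha_\can$ cross-terms).

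The main obstacle I anticipate is the bookkeeping in the $\Fvol(\Omega_f)$ computation: there are genuinely several mixed terms, and one must be careful that the volume is independent of the chosen primitive (guaranteed here since $\Omega_f$ is exact and $M\neq\T^2$, so I may freely use $\alpha_f$), and that all the cross-terms involving $\zeta$, $\zeta_\infty$, and the $f_\avg\eta$ pieces either vanish by type or contribute exactly the right multiple of $\area(M)$. A clean way to organize this, which I would pursue, is to note that $\Omega_f-f_\avg\Omega_\infty=\di\alpha_\can+\mathfrak p_\infty^*((f-f_\avg)\mu)$ is exact with a primitive $\beta$ whose cohomological behaviour is transparent, write $\alpha_f=f_\avg\alpha_\infty+\beta$, and use bilinearity of $(\alpha,\Omega)\mapsto\int\alpha\wedge\Omega$ together with $\Fvol(f_\avg\Omega_\infty)=(f_\avg)^2\Fvol(\Omega_\infty)$ from \eqref{eq:homogeneity}; the remaining cross term $\int\beta\wedge\Omega_\infty+\int\alpha_\infty\wedge\di\beta$-type expression then only needs the single identity \eqref{eq:alpha_can} and Stokes to finish. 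This reduces the risk of sign or coefficient errors.
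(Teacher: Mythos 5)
Your strategy coincides with the paper's: all four quantities are computed directly from the explicit primitives $\alpha_\infty$ and $\alpha_f$, the actions via Stokes' theorem on an admissible capping cylinder together with $\int_{S^1}\gamma_0^*\alpha_{\can}=0$ and $\int_{S^1}\gamma_0^*\eta=1$, and the volumes by expanding $\alpha\wedge\Omega$ and integrating over the fibres. The two action computations and $\Fvol(\Omega_\infty)$ are fine. In the expansion of $\alpha_f\wedge\Omega_f$, however, your list of surviving terms is off: the terms $\eta\wedge\di\alpha_{\can}$ and $\alpha_{\can}\wedge\mathfrak p_\infty^*(f\mu)$ that you keep actually vanish identically (contract the first with the geodesic vector field $X$, which is horizontal and Reeb for $\alpha_{\can}$, and the second with the vertical field $V$), whereas the term you omit, $\tfrac{\area(M)}{\chi(M)}f_\avg\,\eta\wedge\mathfrak p_\infty^*(f\mu)$, is precisely the one that survives and produces the $(f_\avg)^2$ contribution through $\int_{\ta^1M}\eta\wedge\mathfrak p_\infty^*(f\mu)=\int_M f\mu=\area(M)f_\avg$; the $\zeta$-term paired with $\di\alpha_{\can}$ also dies after integration by parts, again by $V$-contraction.

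This is a bookkeeping slip rather than a conceptual gap, and the alternative organization you propose at the end repairs it: writing $\alpha_f=f_\avg\alpha_\infty+\beta$ with $\di\beta=\di\alpha_{\can}+\mathfrak p_\infty^*((f-f_\avg)\mu)$ and $\beta=\alpha_{\can}+\mathfrak p_\infty^*\xi$, the cross term $f_\avg\int_{\ta^1M}\beta\wedge\Omega_\infty$ vanishes by the same contraction and type arguments, $\tfrac12\int_{\ta^1M}\beta\wedge\di\beta=\tfrac12\int_{\ta^1M}\alpha_{\can}\wedge\di\alpha_{\can}=\pi\area(M)$ by \eqref{eq:alpha_can}, and together with $(f_\avg)^2\Fvol(\Omega_\infty)$ this reassembles into $\tfrac{\area(M)^2}{2\chi(M)}K_f$. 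Either route, carried out carefully, gives the paper's proof.
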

\begin{proof}
We compute the volume of $\Omega_\infty$ as
\[
\Fvol(\Omega_\infty)=\frac{1}{2}\int_{\ta^1M}\!\!\alpha_\infty\wedge \Omega_\infty=\frac{\area(M)}{2\chi(M)}\int_{\ta^1M}\!\!\eta\wedge\mathfrak p_\infty^*\mu= \frac{\area(M)}{2\chi(M)}\int_{M}((\mathfrak p_\infty)_*\eta)\mu= \frac{\area(M)^2}{2\chi(M)}.
\] 
To determine the volume of $\Omega_f$, we perform first the preliminary computation
\[
\alpha_f\wedge\Omega_f=\alpha_\can\wedge\di\alpha_\can+\frac{\area(M)}{\chi(M)}\big(f_\avg\eta\wedge\mathfrak p_\infty^*(f\mu)+\mathfrak p_\infty^*\zeta\wedge \di\alpha_\can\big),
\]
using the fact that $X$ annihilates $\eta\wedge\di\alpha$ and $V$ annihilates $\alpha\wedge\mathfrak p_\infty^*(f\mu)$. Then,
\[
\begin{split}
2\,\Fvol(\Omega_f)&=\int_{\ta^1 M}\alpha_f\wedge\Omega_f\\
&=\int_{\ta^1 M}\alpha_\can\wedge\di\alpha_\can+\frac{\area(M)}{\chi(M)}\left[\int_{\ta^1 M}f_\avg\eta\wedge\mathfrak p_\infty^*(f\mu)+\int_{\ta^1 M}\mathfrak p_\infty^*\zeta\wedge\di\alpha_\can\right]\\
&={2\pi}\int_{\ta^1 M}\eta\wedge\mathfrak p_\infty^*\mu+\frac{\area(M)}{\chi(M)}\left[f_\avg\int_{M}f\mu+\int_{\ta^1 M}\mathfrak p_\infty^*(\di\zeta)\wedge\alpha_\can\right]\\
&=2\pi\cdot\area(M)+\frac{\big(\area(M)\cdot f_\avg\big)^2}{\chi(M)}\\
&=\frac{\area(M)^2}{\chi(M)}K_f,
\end{split}
\]
where we used \eqref{eq:alpha_can} and the fact that $V$ annihilates $\mathfrak p_\infty^*(\di\zeta)\wedge\alpha_\can$.

Next we compute the actions. For the $\Omega_\infty$-action of $\gamma_0$ we find
\[
\mathcal A_{\Omega_\infty}(\gamma_0)=\int_{S^1}\gamma_0^*\alpha_\infty=\frac{\area(M)}{\chi(M)}\left(\int_{S^1}\gamma_0^*(\eta+\mathfrak p_\infty^*\zeta_\infty)\right)=\frac{\area(M)}{\chi(M)}.
\]
To compute the $\Omega_f$-action of $(c,\dot c)$, let $\Gamma:[0,1]\x S^1\to\ta^1M$ be a cylinder connecting an oriented $\mathfrak p_\infty$-fibre to $(c,\dot c)$ and recall the formula for the magnetic length \eqref{eq:length_Gamma}. Using Stokes' theorem we compute
\begin{align*}
\mathcal A_{\Omega_f}(c,\dot c)=\int_{\R/T\Z}(c,\dot c)^*\alpha_f=\int_{[0,1]\times S^1}\Gamma^*\Omega_f+\int_{S^1}\Gamma(0,\cdot)^*\alpha_f&=\ell_f(c)+\frac{\area(M)\cdot f_\avg}{\chi(M)},
\end{align*}
where in the last passage we used that $\int_{S^1}\Gamma(0,\cdot)^*\alpha_\can=0$.
\end{proof}
Let $f_*:M\to\R$ be a Zoll function, whose magnetic geodesics lie in $\Lambda(M;\mathfrak h_\infty)$, and let $(\mathfrak p_{f_*},[\om_{f_*}])$ be the weakly Zoll pair associated with the Zoll odd-symplectic form $\Omega_{f_*}$. Due to Proposition \ref{p:magzollintro}.(a), there holds 
\begin{equation}\label{e:zomegainfty}
(\mathfrak p_{f_*},[\om_{f_*}])\in\mathfrak Z_{[\Omega_\infty]}^0(\ta^1M).
\end{equation}
Therefore, from  \eqref{eq:derivative_Zoll_polynomial} and Theorem \ref{thm:Zoll_equality}, we have
\begin{equation}\label{e:omegafMf}
0<\langle [\omega_{f_*}],[M_{f_*}]\rangle=\frac{\di P}{\di A}(\mathcal A(\Omega_{f_*})), \qquad P(\mathcal A(\Omega_{f_*}))=\Fvol(\Omega_{f_*}),
\end{equation}
where $\mathcal A(\Omega_{f_*}):=\mathcal A(\mathfrak p_{f_*},[\om_{f_*}])$ and $\Fvol(\Omega_{f_*}):=\Fvol(\mathfrak p_{f_*},[\om_{f_*}])$ are the action and the volume defined in \eqref{eq:vol_action_weak_Zoll}. In our case, it reads
\[
\mathcal A(\Omega_{f_*})=\int_{S^1}(c_{f_*},\dot c_{f_*})^*\alpha_{f_*},
\]
where $c_{f_*}$ is a prime closed $f_*$-magnetic geodesic.
\begin{cor}\label{c:sign}
If $f_*:M\to\R$ is a Zoll function and $M\neq\T^2$, then
\begin{equation*}
\mathcal A(\Omega_{f_*})=\frac{\langle [\omega_{f_*}],[M_{f_*}]\rangle}{\chi(M)},\qquad K_{f_*}=\left(\frac{\chi(M)\mathcal A(\Omega_{f_*})}{\area(M)}\right)^2=\left(\frac{\langle [\omega_{f_*}],[M_{f_*}]\rangle}{\area(M)}\right)^2.
\end{equation*}
In particular, $\mathcal A(\Omega_{f_*})$ and $\Fvol(\Omega_{f_*})$ have the same sign as $\chi(M)$, and $K_{f_*}$ is positive.
\end{cor}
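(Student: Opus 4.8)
The plan is to assemble facts already established in the excerpt and then carry out elementary algebra; there is no genuine obstacle. The inputs are: the explicit Zoll polynomial $P(A)=\tfrac{\chi(M)}{2}A^2$ of \eqref{e:polneqt} (valid since $M\neq\T^2$), the volume formula $\Fvol(\Omega_{f_*})=\tfrac{\area(M)^2}{2\chi(M)}K_{f_*}$ of Lemma~\ref{l:actvolchineq0}, and the two identities $\tfrac{\di P}{\di A}(\mathcal A(\Omega_{f_*}))=\langle[\om_{f_*}],[M_{f_*}]\rangle$ and $P(\mathcal A(\Omega_{f_*}))=\Fvol(\Omega_{f_*})$ of \eqref{e:omegafMf}.

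First I would differentiate the Zoll polynomial to get $\tfrac{\di P}{\di A}(A)=\chi(M)A$ and evaluate at $A=\mathcal A(\Omega_{f_*})$. Comparing with the first identity of \eqref{e:omegafMf} yields $\chi(M)\,\mathcal A(\Omega_{f_*})=\langle[\om_{f_*}],[M_{f_*}]\rangle$, and dividing by $\chi(M)\neq0$ gives the first asserted formula $\mathcal A(\Omega_{f_*})=\langle[\om_{f_*}],[M_{f_*}]\rangle/\chi(M)$.

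Next I would combine $\Fvol(\Omega_{f_*})=P(\mathcal A(\Omega_{f_*}))=\tfrac{\chi(M)}{2}\mathcal A(\Omega_{f_*})^2$ with $\Fvol(\Omega_{f_*})=\tfrac{\area(M)^2}{2\chi(M)}K_{f_*}$. Equating the two right-hand sides and cancelling the common factor $\tfrac12$ gives $\chi(M)^2\,\mathcal A(\Omega_{f_*})^2=\area(M)^2\,K_{f_*}$, hence $K_{f_*}=\big(\chi(M)\mathcal A(\Omega_{f_*})/\area(M)\big)^2$; substituting the first formula turns this into $K_{f_*}=\big(\langle[\om_{f_*}],[M_{f_*}]\rangle/\area(M)\big)^2$.

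For the remaining assertions, recall that $\om_{f_*}$ is a positive symplectic form on the oriented surface $M_{f_*}$, so $\langle[\om_{f_*}],[M_{f_*}]\rangle>0$, as already noted in \eqref{e:omegafMf}. Then $K_{f_*}$ is the square of a nonzero real number and hence strictly positive; $\mathcal A(\Omega_{f_*})=\langle[\om_{f_*}],[M_{f_*}]\rangle/\chi(M)$ has the sign of $\chi(M)$; and $\Fvol(\Omega_{f_*})=\tfrac{\area(M)^2}{2\chi(M)}K_{f_*}$ has the sign of $\chi(M)$ because $K_{f_*}>0$. The only point meriting care is the legitimacy of using \eqref{eq:derivative_Zoll_polynomial} and Theorem~\ref{thm:Zoll_equality}, which requires $(\mathfrak p_{f_*},[\om_{f_*}])\in\mathfrak Z^0_{[\Omega_\infty]}(\ta^1M)$; this is exactly \eqref{e:zomegainfty}, which holds by Proposition~\ref{p:magzollintro}(a).
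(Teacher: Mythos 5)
Your proposal is correct and follows essentially the same route as the paper: differentiate $P(A)=\tfrac{\chi(M)}{2}A^2$ and compare with \eqref{e:omegafMf} to get the action formula, then equate $P(\mathcal A(\Omega_{f_*}))=\Fvol(\Omega_{f_*})$ with the volume formula from Lemma \ref{l:actvolchineq0} to extract $K_{f_*}$ and the sign statements. Your explicit check that \eqref{e:zomegainfty} licenses the use of \eqref{eq:derivative_Zoll_polynomial} and Theorem \ref{thm:Zoll_equality} matches the setup the paper carries out just before the corollary.
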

\begin{proof}
From \eqref{e:polneqt} we get $\tfrac{\di P}{\di A}(\mathcal A(\Omega_{f_*}))=\chi(M)\mathcal A(\Omega_{f_*})$, which together with the first relation in \eqref{e:omegafMf} yields the statement about $\mathcal A(\Omega_{f_*})$. Putting the second relation in \eqref{e:omegafMf}, equation \eqref{e:polneqt}, and Lemma \ref{l:actvolchineq0} together, we have
\[
\frac{\chi(M)}{2}\mathcal A(\Omega_{f_*})^2=P(\mathcal A(\Omega_{f_*}))=\Fvol(\Omega_{f_*})=\frac{\area(M)^2}{2\chi(M)}K_{f_*}.
\]
This proves the rest of the corollary.
\end{proof}
\subsubsection*{Case $M=\T^2$}
We work with the reference weakly Zoll pair
\[
(\mathfrak p_\infty,c_0)=(\mathfrak p_\infty,[\mu])\in\mathfrak Z_{[\Omega_\infty]}^0(\ta^1\T^2), 
\] 
so that $\Omega_0=\mathfrak p_\infty^*\mu=\Omega_\infty$. This form is not exact by the discussion in Section \ref{ss:unit}. Let $f:\T^2\to\R$ be an arbitrary function with $f_\avg>0$, so that
\[
\bar\ell(f)=\frac{\pi}{f_\avg}>0.
\]
We consider the normalised form
\begin{equation*}
\bar{\Omega}_f:=\frac{1}{f_\avg}\Omega_f
\end{equation*}
so that $\bar\Omega_f$ and $\Omega_\infty$ are cohomologous. More precisely,
\begin{equation*}\label{dfn:alphaftorus}
\bar\Omega_f=\Omega_\infty+\di\big(\tfrac{1}{f_\avg}\alpha_f\big),\qquad \alpha_f:=\alpha_\can+\mathfrak p_\infty^*\zeta-\bar\ell(f)\di\phi, 
\end{equation*}
where $\zeta$ is a one-form on $\T^2$ is such that $\di\zeta=(f-f_\avg)\mu$ and $\phi:\ta^1\T^2\to S^1$ is a global angular function for the bundle $\mathfrak p_\infty$, namely $\di\phi(V)\equiv 1$. As we see in the next lemma, the term $-\bar\ell(f)\di\phi$ is added in order to normalise $\frac{1}{f_\avg}\alpha_f$.

\begin{lem}\label{l:acvolchi0}
Let $f:\T^2\to\R$ be a function with $f_\avg>0$. Then, the one-form $\tfrac{1}{f_\avg}\alpha_f$ is normalised, i.e.~$\Vol(\tfrac{1}{f_\avg}\alpha_f)=0$.
\end{lem}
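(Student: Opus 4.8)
The plan is to unwind the definition of the volume for $M=\T^2$ and carry out a direct, essentially mechanical computation, organising everything according to the splitting $\alpha_f=\alpha_\can+\mathfrak p_\infty^*\zeta-\bar\ell(f)\di\phi$. Write $\beta:=\tfrac{1}{f_\avg}\alpha_f$. Since on $\ta^1\T^2$ we have fixed the reference pair $(\mathfrak p_\infty,[\mu])$, so that $\Omega_0=\Omega_\infty$, the volume unwinds to
\[
\Vol(\beta)=\tfrac12\int_{\ta^1\T^2}\beta\wedge\di\beta+\int_{\ta^1\T^2}\beta\wedge\Omega_\infty,
\]
and by construction $\di\beta=\bar\Omega_f-\Omega_\infty=\tfrac{1}{f_\avg}\bigl(\di\alpha_\can+\mathfrak p_\infty^*((f-f_\avg)\mu)\bigr)$, where we used $\di\zeta=(f-f_\avg)\mu$ and $\di(\di\phi)=0$.

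Next I would record the elementary facts that make all but one summand vanish. Fibre-integrating along $\mathfrak p_\infty$, whose fibres are $V$-orbits of period $1$: since $\alpha_\can(V)=0$, one has $(\mathfrak p_\infty)_*\alpha_\can=0$, hence $\int_{\ta^1\T^2}\alpha_\can\wedge\mathfrak p_\infty^*(h\mu)=0$ for every $h\colon\T^2\to\R$; since $\di\phi(V)=\eta(V)=1$, one has $(\mathfrak p_\infty)_*\di\phi=(\mathfrak p_\infty)_*\eta=1$, hence $\int_{\ta^1\T^2}\di\phi\wedge\mathfrak p_\infty^*(h\mu)=\int_{\T^2}h\mu$, and from \eqref{eq:alpha_can} also $\int_{\ta^1\T^2}\alpha_\can\wedge\di\alpha_\can=2\pi\int_{\ta^1\T^2}\eta\wedge\mathfrak p_\infty^*\mu=2\pi\area(\T^2)$; moreover a wedge of two $\mathfrak p_\infty$-pullbacks vanishes for dimensional reasons, and by Stokes $\int_{\ta^1\T^2}\di\phi\wedge\di\alpha_\can=0$ and $\int_{\ta^1\T^2}\mathfrak p_\infty^*\zeta\wedge\di\alpha_\can=\int_{\ta^1\T^2}\mathfrak p_\infty^*(\di\zeta)\wedge\alpha_\can$, which is again of the first type, hence $0$.

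Plugging these in, the only term of $\bigl(\alpha_\can+\mathfrak p_\infty^*\zeta-\bar\ell(f)\di\phi\bigr)\wedge\bigl(\di\alpha_\can+\mathfrak p_\infty^*((f-f_\avg)\mu)\bigr)$ with nonzero integral is $\alpha_\can\wedge\di\alpha_\can$; in particular the $\di\phi$-term integrates to $-\bar\ell(f)\int_{\T^2}(f-f_\avg)\mu=0$ since $f-f_\avg$ has zero average. Hence $\tfrac12\int_{\ta^1\T^2}\beta\wedge\di\beta=\tfrac{1}{2f_\avg^2}\cdot2\pi\area(\T^2)=\tfrac{\pi}{f_\avg^2}\area(\T^2)$. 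In $\int_{\ta^1\T^2}\beta\wedge\Omega_\infty$, the $\alpha_\can$- and $\mathfrak p_\infty^*\zeta$-contributions vanish (respectively by $\alpha_\can(V)=0$ and as a product of pullbacks), leaving $\int_{\ta^1\T^2}\beta\wedge\Omega_\infty=-\tfrac{\bar\ell(f)}{f_\avg}\int_{\ta^1\T^2}\di\phi\wedge\mathfrak p_\infty^*\mu=-\tfrac{\bar\ell(f)}{f_\avg}\area(\T^2)$. Since $\bar\ell(f)=\pi/f_\avg$ on $\T^2$, the two contributions equal $\tfrac{\pi}{f_\avg^2}\area(\T^2)$ and $-\tfrac{\pi}{f_\avg^2}\area(\T^2)$, so they cancel and $\Vol(\beta)=0$, as claimed.

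I expect no genuine obstacle: the computation is bookkeeping, and the only points requiring care are the sign conventions in Stokes' theorem and fibre integration, and the observation --- which is the whole reason the term was introduced --- that $-\bar\ell(f)\di\phi$ affects only the integral $\int\beta\wedge\Omega_\infty$ (because $\di(\di\phi)=0$, it does not change $\di\beta$) and is calibrated precisely to cancel the ``canonical'' volume $\tfrac{\pi}{f_\avg^2}\area(\T^2)$ produced by the $\alpha_\can\wedge\di\alpha_\can$ term.
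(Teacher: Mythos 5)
Your proposal is correct and follows essentially the same route as the paper: expand $\alpha_f=\alpha_\can+\mathfrak p_\infty^*\zeta-\bar\ell(f)\di\phi$ in the defining integral for $\Vol$, kill all cross terms by fibre integration ($\alpha_\can(V)=0$, $(\mathfrak p_\infty)_*\di\phi=(\mathfrak p_\infty)_*\eta=1$), Stokes, dimension counting, and the vanishing average of $f-f_\avg$, and then observe that the surviving contributions $\tfrac{\pi}{f_\avg^2}\area(\T^2)$ from $\alpha_\can\wedge\di\alpha_\can$ and $-\tfrac{\bar\ell(f)}{f_\avg}\area(\T^2)$ from the $\di\phi$-term cancel because $\bar\ell(f)=\pi/f_\avg$. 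The only cosmetic difference is that you treat the two summands $\tfrac12\int\beta\wedge\di\beta$ and $\int\beta\wedge\Omega_\infty$ separately, whereas the paper groups them as $\int\alpha_f\wedge(\Omega_\infty+\tfrac12\di(\tfrac{1}{f_\avg}\alpha_f))$.
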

\begin{proof}
Using \eqref{eq:alpha_can}, we compute
\begin{align*}
(f_\avg)^2\Vol(\tfrac{1}{f_\avg}\alpha_f)&=f_\avg\int_{\ta^1\T^2}\alpha_f\wedge \Big(\Omega_\infty+\frac{1}{2}\di\big(\tfrac{1}{f_\avg}\alpha_f\big)\Big)\\
&=\int_{\ta^1\T^2}\big(\alpha_\can+\mathfrak p_\infty^*\zeta-\bar\ell(f)\di\phi\big)\wedge\Big(f_\avg\,\mathfrak p_\infty^*\mu+\frac{1}{2}\big(\di\alpha_\can+\mathfrak p_\infty^*(\di\zeta)\big)\Big)  \\
&=\frac{1}{2}\int_{\ta^1 \T^2}\alpha_\can\wedge\di\alpha_\can-\bar\ell(f)f_\avg\cdot\area(\T^2)\\
&=\pi\int_{\ta^1 \T^2}\eta\wedge\mathfrak p_\infty^*\mu-\pi\cdot\area(\T^2)\\
&=0.\qedhere
\end{align*}
\end{proof}
By Lemma \ref{l:acvolchi0}, we can use the one-form $\tfrac{1}{f_\avg}\alpha_f$ to compute the $\bar{\Omega}_f$-action of loops:
\begin{equation}\label{e:actionmagt2}
\mathcal A_{\bar\Omega_f}(\gamma)=\frac{1}{f_\avg}\int_{S^1}\Gamma(0,\cdot)^*\alpha_f+\frac{1}{f_\avg}\int_{[0,1]\x S^1}\Gamma^*\Omega_f,
\end{equation}
where $\Gamma:[0,1]\x S^1\to\ta^1\T^2$ is a homotopy between an oriented $\mathfrak p_\infty$-fibre and $\gamma\in\Lambda_{\mathfrak h_\infty}(\ta^1\T^2)$.

\begin{lem}\label{l:actorus}
Let $f:\T^2\to\R$ be a function with $f_\avg>0$. There holds 
\begin{equation*}
\mathcal A_{\bar\Omega_f}(c,\dot c)=\frac{1}{f_\avg}\big(\ell_f(c)-\bar\ell(f)\big),\qquad \forall\,c\in\Lambda(\T^2;\mathfrak h_\infty).
\end{equation*}
\end{lem}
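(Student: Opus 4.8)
The plan is to combine the action formula \eqref{e:actionmagt2}, which is available thanks to Lemma \ref{l:acvolchi0}, with the magnetic length formula \eqref{eq:length_Gamma} and the explicit primitive $\alpha_f=\alpha_\can+\mathfrak p_\infty^*\zeta-\bar\ell(f)\di\phi$. Concretely, for $c\in\Lambda(\T^2;\mathfrak h_\infty)$ I pick a homotopy $\Gamma:[0,1]\x S^1\to\ta^1\T^2$ between an oriented $\mathfrak p_\infty$-fibre $\gamma_0=\Gamma(0,\cdot)$ and $(c,\dot c)$ (up to reparametrisation), and evaluate the two terms on the right-hand side of \eqref{e:actionmagt2} separately.

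\medskip
\noindent\textbf{The cylinder term.}
By \eqref{eq:length_Gamma}, $\int_{[0,1]\x S^1}\Gamma^*\Omega_f=\ell_f(c)$, so the second summand in \eqref{e:actionmagt2} contributes $\tfrac{1}{f_\avg}\ell_f(c)$ directly.

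\medskip
\noindent\textbf{The boundary term.}
I must compute $\int_{S^1}\gamma_0^*\alpha_f=\int_{S^1}\gamma_0^*\big(\alpha_\can+\mathfrak p_\infty^*\zeta-\bar\ell(f)\di\phi\big)$. Since $\gamma_0$ is a $\mathfrak p_\infty$-fibre, its projection $\mathfrak p_\infty\circ\gamma_0$ is constant, so $\int_{S^1}\gamma_0^*\alpha_\can=0$ (this is the same computation already used in the proof of Lemma \ref{l:actvolchineq0} and in the definition of admissible capping discs) and likewise $\int_{S^1}\gamma_0^*(\mathfrak p_\infty^*\zeta)=0$. The only surviving contribution is $-\bar\ell(f)\int_{S^1}\gamma_0^*\di\phi$. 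Because $\phi$ is a global angular function with $\di\phi(V)\equiv 1$ and $\gamma_0$ is a once-traversed oriented $\mathfrak p_\infty$-fibre, i.e.~an orbit of the $S^1$-action generated by $V$ in the class $\mathfrak h_\infty$, we get $\int_{S^1}\gamma_0^*\di\phi=1$. Hence $\int_{S^1}\gamma_0^*\alpha_f=-\bar\ell(f)$, and plugging both computations into \eqref{e:actionmagt2} gives
\[
\mathcal A_{\bar\Omega_f}(c,\dot c)=\frac{1}{f_\avg}\big(-\bar\ell(f)\big)+\frac{1}{f_\avg}\ell_f(c)=\frac{1}{f_\avg}\big(\ell_f(c)-\bar\ell(f)\big),
\]
as claimed.

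\medskip
The only genuinely delicate point is the orientation/normalisation bookkeeping in the boundary term: one must make sure that the oriented $\mathfrak p_\infty$-fibre appearing as $\Gamma(0,\cdot)$ is traversed exactly once in the direction prescribed by the class $\mathfrak h_\infty$ (the $\mathfrak o_M$-negative direction generated by $V$), so that $\int_{S^1}\gamma_0^*\di\phi=+1$ rather than $-1$ or some other integer; this is exactly the reparametrisation freedom encoded in Lemma \ref{l:homC}, and one checks that any admissible choice produces the same integer because $\di\phi$ is closed and the homotopy class of $\gamma_0$ is fixed. Everything else is a routine application of Stokes' theorem together with the identities already recorded, in complete parallel with the proof of Lemma \ref{l:actvolchineq0}, and independence of $\mathcal A_{\bar\Omega_f}$ from the choice of $\Gamma$ has already been noted when the action was defined.
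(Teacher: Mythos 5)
Your proof is correct and follows exactly the paper's argument: substitute the magnetic length formula \eqref{eq:length_Gamma} into \eqref{e:actionmagt2} and compute $\int_{S^1}\Gamma(0,\cdot)^*\alpha_f=-\bar\ell(f)$ using that $\alpha_\can$ and $\mathfrak p_\infty^*\zeta$ vanish on the fibre direction while $\di\phi$ integrates to $1$ over an oriented fibre. The paper's proof is just a terser version of the same computation.
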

\begin{proof}
The claim follows from substituting identity \eqref{eq:length_Gamma} in \eqref{e:actionmagt2}  and the computation
\[
\int_{S^1}\Gamma(0,\cdot)^*\alpha_f=\int_{S^1}\Gamma(0,\cdot)^*\big(\alpha+\mathfrak p_\infty^*\zeta-\bar\ell(f)\di\phi\big)=-\bar\ell(f).\qedhere
\]
\end{proof}
Finally, we observe that if $f_*:\T^2\to\R$ is a Zoll function whose magnetic geodesics lie in $\Lambda(\T^2;\mathfrak h_\infty)$, then, by Proposition \ref{p:magzollintro}.(b), we have
\begin{equation}\label{e:f*pos}
(f_*)_\avg>0
\end{equation}
and, setting $\bar{\om}_{f_*}:=\tfrac{1}{f_\avg}\om_{f_*}$, we see that $(\mathfrak p_{f_*},[\bar{\om}_{f_*}])$ is the weakly Zoll pair associated with the Zoll odd-symplectic form $\bar{\Omega}_{f_*}$. By Proposition \ref{p:magzollintro}.(a), there holds
\begin{equation}\label{e:zomegainftyt2}
(\mathfrak p_{f_*},[\bar{\om}_{f_*}])\in\mathfrak Z^0_{[\Omega_\infty]}(\ta^1\T^2).
\end{equation}

\section{The proof of the magnetic systolic-diastolic inequality}\label{sec:magsys}

\subsection{The inequality in a neighbourhood of a Zoll function}\label{ss:zollfun}
In this subsection we give a proof of Theorem \ref{t:mag2}, which states that the magnetic systolic-diastolic inequality holds in a $C^2$-neighbourhood $\mathcal F$ of a Zoll function $f_*:M\to \R$. As before we deal with the cases $M\neq \T^2$ and $M=\T^2$ separately.

\subsubsection*{Proof of Theorem \ref{t:mag2} for $M\neq\T^2$}
In view of \eqref{e:zomegainfty} and Theorem \ref{thm:lsios}, there exist a $C^1$-neighbourhood \label{dfn:mathcalw1}$\mathcal W$ of the set $\Lambda(f_*;\mathfrak h_\infty)$ in $\Lambda(M; \mathfrak h_\infty)$ and a $C^2$-neighbourhood $\mathcal F$ of the function $f_*$ in $C^\infty(M)$ such that 
\begin{equation*}\label{dfn:aminmaxf}
\mathcal A_{\min}(\Omega_f)=\min_{\substack{c\in\mathcal W\cap\Lambda(f;\mathfrak h_\infty)\\ c\,\text{prime}}}\mathcal A_{\Omega_f}(c,\dot c),\qquad \mathcal A_{\max}(\Omega_f)=\max_{\substack{c\in\mathcal W\cap\Lambda(f;\mathfrak h_\infty)\\ c\,\text{prime}}}\mathcal A_{\Omega_f}(c,\dot c).
\end{equation*}
and
\begin{equation}\label{e:sys2}
P(\mathcal A_{\min}(\Omega_{f}))\leq\Fvol(\Omega_{f})\leq P(\mathcal A_{\max}(\Omega_{f})),\qquad \forall\,f\in\mathcal F
\end{equation}
with equality signs if and only if $\Omega_{f}$ is Zoll. Since $\mathcal A_{\min}(\Omega_f)$ and $\mathcal A _{\max}(\Omega_f)$ vary continuously in $f\in\mathcal F$, shrinking $\mathcal F$ if necessary, we deduce from Corollary \ref{c:sign} that for all $f\in\mathcal F$:
\begin{equation}\label{e:prop123}
K_f> 0,\qquad \mathrm{sign}\big(\mathcal A_{\min}(\Omega_{f})\big)=\mathrm{sign}\big(\chi(M)\big)=\mathrm{sign}\big(\mathcal A_{\max}(\Omega_{f})\big).
\end{equation}

We show that the magnetic systolic-diastolic inequality holds on $\mathcal F$.
Let $f:M\to \R$ be a function in $\mathcal F$. According to Lemma \ref{l:actvolchineq0} and equation \eqref{e:polneqt}, formula \eqref{e:sys2} becomes
\[
\chi(M)\frac{\mathcal A_{\min}(\Omega_{f})^2}{2}\leq \frac{\area(M)^2}{2\chi(M)}K_{f}\leq \chi(M)\frac{\mathcal A_{\max}(\Omega_{f})^2}{2}.
\]
The identities in \eqref{e:prop123} simplify this inequality to 
\[
\mathcal A_{\min}(\Omega_{f})\leq \frac{\area(M)}{\chi(M)}\sqrt{K_f}\leq \mathcal A_{\max}(\Omega_{f}).
\]
The formula for the action in Lemma \ref{l:actvolchineq0} and the definition of $\ell_{\min}(f)$, $\ell_{\max}(f)$ yield
\[
\mathcal A_{\min}(\Omega_{f})\geq\ell_{\min}(f)+\frac{\area(M)\cdot f_\avg}{\chi(M)},\qquad  \mathcal A_{\max}(\Omega_{f})\leq \ell_{\max}(f)+\frac{\area(M)\cdot f_\avg}{\chi(M)},
\]
where the equalities hold when $f$ is Zoll. Combining the inequalities above, we get
\[
\ell_{\min}(f)\leq \frac{\area(M)}{\chi(M)}\big(\sqrt{K_f}-f_\avg\big)\leq \ell_{\max}(f),
\]
and using the definition of the average curvature, we rewrite the term in the middle as
\[
 \frac{\area(M)}{\chi(M)}\big(\sqrt{K_f}-f_\avg\big)=\frac{\area(M)}{\chi(M)}\frac{K_f-(f_\avg)^2}{\sqrt{K_f}+f_\avg}=\frac{2\pi}{\sqrt{K_f}+f_\avg}=\bar\ell(f).
\]
This shows the magnetic systolic-diastolic inequality $\ell_{\min}(f)\leq \bar\ell(f)\leq \ell_{\max}(f)$. Moreover, if $f$ is Zoll, we actually have equalities. Conversely, if one of the two inequalities is an equality, we also have an equality in \eqref{e:sys2}. This implies that $\Omega_{f}$, and thus $f$, is Zoll.
\qed

\subsubsection*{Proof of Theorem \ref{t:mag2} for $M=\T^2$}

Thanks to \eqref{e:f*pos}, \eqref{e:zomegainftyt2} and Theorem \ref{thm:lsios}, there exists a $C^1$-neighbourhood $\mathcal W$ of $\Lambda(f_*;\mathfrak h_\infty)$ inside $\Lambda(\T^2;\mathfrak h_\infty)$ and a $C^2$-neighbourhood $\mathcal F$ of $f_*$ in $C^\infty(\T^2)$ with the following properties. If $f\in \mathcal F$, then $f_\avg>0$ and 
\begin{equation}\label{e:sys3}
\mathcal A_{\min}(\bar\Omega_f)\leq 0\leq \mathcal A_{\max}(\bar\Omega_f),\qquad\forall\, f\in\mathcal F,
\end{equation}
where any of the two equalities holds if and only if $\bar\Omega_f$ is Zoll. Since $\Omega_f$ and $\bar{\Omega}_f$ have the same closed characteristics, we have
\[
\mathcal A_{\min}(\bar\Omega_f):=\min_{\substack{c\in\mathcal W\cap\Lambda(f;\mathfrak h_\infty)\\ c\,\text{prime}}}\mathcal A_{\bar\Omega_f}(c,\dot c),\qquad \mathcal A_{\max}(\bar\Omega_f):=\max_{\substack{c\in\mathcal W\cap\Lambda(f;\mathfrak h_\infty)\\ c\,\text{prime}}}\mathcal A_{\bar\Omega_f}(c,\dot c).
\]
From the definition of $\ell_{\min}(f)$ and $\ell_{\max}(f)$ and Lemma \ref{l:actorus}, we get
\[
\mathcal A_{\min}(\bar\Omega_{f})\geq \frac{1}{f_\avg}\big(\ell_{\min}(f)-\bar\ell(f)\big),\qquad 
\mathcal A_{\max}(\bar\Omega_{f})\leq \frac{1}{f_\avg}\big(\ell_{\max}(f)-\bar\ell(f)\big)
\]
where any of the two equalities holds, if $f$ is Zoll. Plugging these relations into \eqref{e:sys3}, and using that $f_\avg$ is positive, we derive the desired inequality:
\[
\ell_{\min}(f)\leq\bar\ell(f)\leq\ell_{\max}(f),
\]
If any of the equalities holds, then there is an equality also in \eqref{e:sys3} and $f$ is Zoll. The converse is also readily seen to be true.
\qed
\subsection{The inequality for strong magnetic functions}\label{ss:strong}
In this subsection we prove Theorem \ref{t:mag}, which states that the magnetic systolic-diastolic inequality holds for $C_g$-strong functions (see Definition \ref{d:strong}), where $C_g>0$ is a constant depending only on $g$ that we will determine.
	
\subsubsection*{Proof of Theorem \ref{t:mag} for $M\neq\T^2$}

By Theorem  \ref{thm:lsios}, there exists a $C^0$-neighbourhood $\Lambda(\mathfrak p_\infty)\subset\Lambda_{\mathfrak h_\infty}(\ta^1M)$ of the $\mathfrak p_\infty$-fibres and a $C^2$-neighbourhood $\mathcal U$ of $\Omega_\infty$ in the space of exact odd-symplectic forms on $\ta^1M$ such that 
\begin{equation}\label{e:msdistr}
P(\mathcal A_{\min}(\Omega))\leq\Fvol(\Omega)\leq P(\mathcal A_{\max}(\Omega)),\qquad\forall\, \Omega\in\mathcal U
\end{equation}
with equality signs if and only if $\Omega$ is Zoll. Here, $\mathcal A_{\min}(\Omega)$ and $\mathcal A_{\max}(\Omega)$ are the minimal and maximal action among the closed characteristics in the set $\mathcal X(\Omega;\mathfrak p_\infty)$. Since $\mathcal A_{\Omega_\infty}(\gamma_0)$ and $\chi(M)$ have the same sign by Lemma \ref{l:actvolchineq0}, and $\mathcal A_{\min}$, $\mathcal A_{\max}$ vary continuously in $\mathcal U$, we have, up to shrinking $\mathcal U$,
\[
\mathrm{sign}(\mathcal A_{\min}(\Omega))=\mathrm{sign}(\chi(M))=\mathrm{sign}(\mathcal A_{\max}(\Omega)),\qquad\forall\,\Omega\in\mathcal U.
\]
In particular, from \eqref{e:msdistr} and the formula for $P$, we also have
\begin{equation}\label{eq:sign_equality}
\mathrm{sign}(\Fvol(\Omega))=\mathrm{sign}(\chi(M)).
\end{equation}

We prove the theorem with $C_g:=C_\mathcal U$, the constant given by Lemma \ref{l:normal}. Let us consider a $C_g$-strong function $f:M\to(0,\infty)$, and let $\Psi:\ta^1 M\to\ta^1 M$ be a diffeomorphism isotopic to the identity such that $\tfrac{1}{f_\avg}\Psi^*\Omega_f\in\mathcal U$, whose existence is ensured by Lemma \ref{l:normal}. From the homogeneity \eqref{eq:homogeneity} of the volume and its invariance property \eqref{eq:invariance_vol}, we have
\[
\Fvol\big(\tfrac{1}{f_\avg}\Psi^*\Omega_f\big)=\big(\tfrac{1}{f_\avg}\big)^2\Fvol(\Omega_f).
\]
From the formula for the volume in Lemma \ref{l:actvolchineq0} and the relation \eqref{eq:sign_equality}, we see that $K_f>0$. Using the homogeneity of the action and formula \eqref{e:polneqt} for $P$, we can rewrite \eqref{e:msdistr} as
\[
\mathcal A_{\min}(\Psi^*\Omega_f)\leq \frac{\area(M)}{\chi(M)}\sqrt{K_f}\leq \mathcal A_{\max}(\Psi^*\Omega_{f}).
\]

Since $\Psi$ is isotopic to $\id_{\ta^1M}$, we also see that 
\[
c_\gamma:=\mathfrak p_\infty(\Psi(\gamma))\in\Lambda(f; \mathfrak h_\infty),\qquad \forall\, \gamma\in\mathcal X(\Psi^*\Omega_f;\mathfrak p_\infty).
\]
From Lemma \ref{l:actvolchineq0} and the invariance property \eqref{eq:invariance_action}, we conclude that
\[
\mathcal A_{\Psi^*\Omega_f}(\gamma)=\mathcal A_{\Omega_f}(c_\gamma,\dot c_\gamma)=\ell_f(c_\gamma)+\frac{\area(M)\cdot f_\avg}{\chi(M)},\qquad \forall\, \gamma\in\mathcal X(\Psi^*\Omega_f;\mathfrak p_\infty). 
\]
From the definition of $\ell_{\min}(f)$ and $\ell_{\max}(f)$, we have
\begin{align*}
\ell_{\min}(f)+\frac{\area(M)\cdot f_\avg}{\chi(M)}\leq \mathcal A_{\min}(\Psi^*\Omega_f),\qquad \mathcal A_{\max}(\Psi^*\Omega_f)\leq\ell_{\max}(f)+\frac{\area(M)\cdot f_\avg}{\chi(M)}
\end{align*}
and equalities hold if $f$ is Zoll. The rest of the proof goes along the same line as in the proof of Theorem \ref{t:mag2} for $M\neq \T^2$ in Section \ref{ss:zollfun} above.\qed

\subsubsection*{Proof of Theorem \ref{t:mag} for $M=\T^2$}

Theorem \ref{thm:lsios} yields a $C^0$-neighbourhood $\Lambda(\mathfrak p_\infty)$ of the $\mathfrak p_\infty$-fibres and a $C^2$-neighbourhood $\mathcal U$ of $\Omega_\infty$ in the space of odd-symplectic forms cohomologous to $\Omega_\infty$ such that
\begin{equation}\label{e:amin0amax}
\mathcal A_{\min}(\Omega)\leq 0\leq \mathcal A_{\max}(\Omega),\qquad\forall\,\Omega\in\mathcal U,
\end{equation}
and any of the equalities holds if and only if $\Omega$ is Zoll. We prove the theorem with $C_g:=C_\mathcal U$, the constant in Lemma \ref{l:normal}. Let $f:\T^2\to(0,\infty)$ be a $C_g$-strong function. In particular we have $f_\avg>0$. Let $\Psi$ be the diffeomorphism isotopic to the identity constructed in Lemma \ref{l:normal} with the property that $\Psi^*\bar\Omega_f\in\mathcal U$. Since $\Psi$ is isotopic to the identity, we see that $c_\gamma:=\mathfrak p_\infty(\Psi(\gamma))\in\Lambda(f; \mathfrak h_\infty)$ for all $\gamma\in\mathcal X(\Psi^*\bar\Omega_f;\mathfrak p_\infty)$. From \eqref{eq:invariance_action} and Lemma \ref{l:actorus}, we get 
\[
\mathcal A_{\Psi^*\bar\Omega_f}(\gamma)=\mathcal A_{\bar\Omega_f}(c_\gamma,\dot c_\gamma)=\frac{1}{f_\avg}(\ell_f(c_\gamma)-\bar{\ell}(f)),\qquad \forall\, \gamma\in\mathcal X(\Psi^*\bar\Omega_f;\mathfrak p_\infty).
\]
This relation together with \eqref{e:amin0amax} yields
\[
\ell_{\min}(f)-\bar\ell(f)\leq f_\avg\cdot\mathcal A_{\min}(\Psi^*\bar\Omega_f)\leq 0\leq f_\avg\cdot\mathcal A_{\max}(\Psi^*\bar\Omega_f)\leq \ell_{\max}(f)-\bar\ell(f)
\]
which in turn implies 
\[
\ell_{\min}(f)\leq \bar\ell(f)\leq \ell_{\max}(f).
\]
If $f$ is Zoll, the equalities hold. Conversely if $\ell_{\min}(f)$ or $\ell_{\max}(f)$ are equal to $\bar\ell(f)$, then there is an equality also in \eqref{e:amin0amax}, which yields that $\Psi^*\bar\Omega_f$, and hence $f$, is Zoll.\qed

\appendix

\section{$C^k$-estimate on the time-one map of a flow}\label{s:appendix}
For $h,k\in\N$, we define the polynomial
\[
B_{h,k}:\R^{k+1}\to \R,\qquad B_{h,k}(x)=\sum_{a\in I_{h,k}}x^a,
\]
where $x=(x_0,\,\cdots,x_k)$, $x^a:=x_0^{a_0}\cdot\ldots\cdot x_k^{a_k}$, and $I_{h,k}$ is the following set of multi-indices
\begin{equation*}
I_{h,k}:=\Big\{a=(a_0,\,\cdots,a_k)\in\N^{k+1}\ \Big|\ 0<\sum_{j=0}^k(j+1)a_j\leq h+k\Big\}.
\end{equation*}
If $\Psi:\mathcal M_1\to\mathcal M_2$ is a map between two Riemannian manifolds, we use the short-hand
\begin{equation*}\label{dfn:bhk}
B_{h,k}\big(\Vert\di\Psi\Vert\big):= B_{h,k}\big(\Vert\di\Psi\Vert_{C^0},\,\cdots,\Vert\di\Psi\Vert_{C^k}\big).
\end{equation*}
A straightforward computation shows that there is a constant $C_k>0$ (depending only on $\mathcal M_1$ and $\mathcal M_2$) such that for an $h$-form $\eta$ on $\mathcal M_2$,
\begin{equation}\label{eq:Ck_estimate}
\|\Psi^*\eta\|_{C^k}\leq C_kB_{h,k}(\|\di\Psi\|)\|\eta\|_{C^k}.
\end{equation}

Let $\Psi=\Phi_1$ be the time-one map of the flow of a (time-dependent) vector field. Then, using Gronwall's Lemma inductively, one can estimate $B_{h,k}(\Vert\di\Phi_1\Vert)$ in terms of the vector field. Here, we give only the bound for $(h,k)=(2,2)$ which is what is needed in Lemma \ref{l:normal}. 
\begin{lem}\label{l:diff2}
For every compact manifold $\mathcal M$, there exists a constant $C_{k}>0$ with the following property. For every time-dependent vector field $X=\{X_s\}_{s\in[0,1]}$ on $\mathcal M$ such that the corresponding flow $\{\Phi_s\}$ is defined up to time $1$, there holds
\begin{equation*}
B_{2,2}\big(\Vert \di\Phi_1\Vert\big)\leq \Big(\langle \nabla X\rangle_{C^2}+\langle \nabla X\rangle_{C^1}^2\Big)e^{C\langle \nabla X\rangle_{C^0}},
\end{equation*}
where we have set $\displaystyle\langle \nabla X\rangle_{C^k}:=1+\max_{s\in[0,1]}\Vert \nabla X_s\Vert_{C^k},\ \forall\, k\in\N$.
\end{lem}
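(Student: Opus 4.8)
The plan is to bound $\di\Phi_1$ together with its first two covariant derivatives by integrating, along each flow line $s\mapsto\Phi_s(x)$, the variational equations of the flow, and then running Gronwall's Lemma inductively. Set $M_j:=\langle \nabla X\rangle_{C^j}=1+\max_s\Vert\nabla X_s\Vert_{C^j}$, so that $1\le M_0\le M_1\le M_2$; for $j=0,1,2$ let $\mathcal D_j(s)$ be the $j$-th base-direction covariant derivative of the bundle map $\di\Phi_s$, so that $\mathcal D_0=\di\Phi$, $\mathcal D_0(0)=\id$, $\mathcal D_j(0)=0$ for $j\ge 1$, and $\Vert\di\Phi_1\Vert_{C^j}=\sum_{i=0}^j\Vert\mathcal D_i(1)\Vert_{C^0}$; and write $\nabla_s$ for the covariant derivative along the flow line. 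I would first record the variational equations. Torsion-freeness of $\nabla$ turns $\partial_s\Phi_s=X_s\circ\Phi_s$ into the clean linear equation $\nabla_s\mathcal D_0=\big((\nabla X_s)\circ\Phi_s\big)\circ\mathcal D_0$, with no curvature term, and differentiating further in the base directions (commuting $\nabla_s$ past them) gives, for $j=1,2$, equations of the form
\[
\nabla_s\mathcal D_j=\big((\nabla X_s)\circ\Phi_s\big)\circ\mathcal D_j+R_j(s),
\]
where $R_j(s)$ is a universal expression whose monomials are either $(\nabla^a X_s)\circ\Phi_s$ contracted with a product of factors $\mathcal D_b(s)$ with $2\le a\le j+1$ and $b<j$, or a curvature contribution built from $R^{\mathcal M}$ or $\nabla R^{\mathcal M}$ applied to $X_s$ and to factors $\mathcal D_b(s)$ with $b<j$.

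The step I expect to be the main obstacle is to check that these equations involve only the norms of $\nabla X_s$ appearing on the right-hand side of the lemma, and in particular not $\Vert X_s\Vert_{C^0}$, which is not controlled on a general compact manifold. The homogeneous coefficient is $\nabla X_s$ rather than $X_s$ — which is precisely why the invariant, not the coordinate, form of the variational equations must be used — while the curvature contributions, which a priori carry a factor $X_s$, are tamed by the Ricci identity $R^{\mathcal M}(V,W)Y=\nabla^2_{V,W}Y-\nabla^2_{W,V}Y$ together with the pair symmetry $\langle R^{\mathcal M}(A,B)V,W\rangle=\langle R^{\mathcal M}(V,W)A,B\rangle$: these yield $\Vert R^{\mathcal M}(X_s,\cdot)\Vert_{C^0}\le 2\Vert\nabla^2 X_s\Vert_{C^0}\le 2M_1$ and, in the same way, control the $\nabla R^{\mathcal M}$-terms through $\Vert\nabla^3 X_s\Vert_{C^0}\le M_2$ and $\Vert\nabla^2 X_s\Vert_{C^0}\le M_1$. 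Granting this, each monomial of $R_j(s)$ has $C^0$-norm at most a constant depending only on $\mathcal M$ times a product of $M_0$, $M_1$, $M_2$ and the already-estimated quantities $\Vert\mathcal D_b(s)\Vert_{C^0}$ with $b<j$.

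With this structure in hand the remaining steps are a mechanical Gronwall iteration. The homogeneous equation for $\mathcal D_0$ with $\mathcal D_0(0)=\id$ gives $\Vert\di\Phi_1\Vert_{C^0}=\Vert\mathcal D_0(1)\Vert_{C^0}\le e^{M_0}$. Inserting this into the equation for $\mathcal D_1$ bounds $\Vert R_1(s)\Vert_{C^0}$ by $C_{\mathcal M}M_1e^{2M_0}$, and Gronwall (homogeneous coefficient $\le M_0$, initial value $0$) yields $\Vert\mathcal D_1(1)\Vert_{C^0}\le C_{\mathcal M}M_1e^{3M_0}$. Inserting both bounds into the equation for $\mathcal D_2$ gives $\Vert R_2(s)\Vert_{C^0}\le C_{\mathcal M}\big(M_2e^{3M_0}+M_1^2e^{4M_0}\big)$ and hence $\Vert\mathcal D_2(1)\Vert_{C^0}\le C_{\mathcal M}(M_2+M_1^2)e^{5M_0}$. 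Summing,
\[
\Vert\di\Phi_1\Vert_{C^0}\le e^{M_0},\qquad \Vert\di\Phi_1\Vert_{C^1}\le C_{\mathcal M}M_1e^{3M_0},\qquad \Vert\di\Phi_1\Vert_{C^2}\le C_{\mathcal M}(M_2+M_1^2)e^{5M_0}.
\]

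Finally I would substitute these into $B_{2,2}(\Vert\di\Phi_1\Vert)=\sum_{a\in I_{2,2}}\Vert\di\Phi_1\Vert_{C^0}^{a_0}\Vert\di\Phi_1\Vert_{C^1}^{a_1}\Vert\di\Phi_1\Vert_{C^2}^{a_2}$. Since each of the finitely many $a\in I_{2,2}$ has $a_0+2a_1+3a_2\le 4$, and since $1\le M_0\le M_1\le M_2$, every resulting monomial is at most $C_{\mathcal M}(M_2+M_1^2)e^{6M_0}$ — the dominant contributions being $\Vert\di\Phi_1\Vert_{C^1}^2\le C_{\mathcal M}M_1^2e^{6M_0}$ and $\Vert\di\Phi_1\Vert_{C^0}\Vert\di\Phi_1\Vert_{C^2}\le C_{\mathcal M}(M_2+M_1^2)e^{6M_0}$. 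Summing the finitely many monomials and absorbing the overall constant into the exponent — legitimate since $M_0\ge 1$, so $C_{\mathcal M}e^{6M_0}\le e^{(6+\log C_{\mathcal M})M_0}$ — yields $B_{2,2}(\Vert\di\Phi_1\Vert)\le\big(\langle \nabla X\rangle_{C^2}+\langle \nabla X\rangle_{C^1}^2\big)e^{C\langle \nabla X\rangle_{C^0}}$ with $C:=6+\log C_{\mathcal M}$, which is the claim.
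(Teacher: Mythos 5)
Your proof is correct and follows essentially the same route as the paper's: the variational equations for $\di\Phi_s$ and its first two derivatives, an inductive application of Gronwall's Lemma, and substitution of the resulting bounds into $B_{2,2}$, with constants absorbed into the exponent using $\langle\nabla X\rangle_{C^0}\geq 1$. Your covariant treatment of the curvature terms---bounding $R^{\mathcal M}(X_s,\cdot)$ by $2\Vert\nabla^2 X_s\Vert$ via the Ricci identity and the pair symmetry, so that $\Vert X_s\Vert_{C^0}$ never enters---is in fact more careful than the paper's computation, which works in local coordinates and does not record these terms explicitly.
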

\begin{proof}
We preliminarily observe that if $V$ is a finite-dimensional vector space endowed with a norm coming from a scalar product, then for every $s\mapsto v(s)\in V$, there holds
\[
\frac{\di |v|}{\di s}\leq \Big|\frac{\di v}{\di s}\Big|.
\]
By the compactness of $\mathcal M$, we just need to prove the lemma in local coordinates. Recalling that $\p_s\Phi_s=X_s\circ \Phi_s$ by definition, we compute
\begin{align*}
\p_s\big|\di\Phi_s\big|&\leq\big|\di(X_s\circ\Phi_s)\big|\leq\big|\nabla X_s\big|\cdot\big|\di\Phi_s\big|,\\
\p_s\big|\nabla\di\Phi_s\big|&\leq\Big|\nabla\Big(\big(\nabla_{\di\Phi_s} X_s\big)_{\Phi_s}\Big)\Big|\leq \big|\nabla^2 X_s\big|\cdot\big|\di\Phi_s\big|^2+\big|\nabla X_s\big|\cdot\big|\nabla\di\Phi_s\big|,\\
\p_s\big|\nabla^2\di\Phi_s\big|&\leq\Big|\nabla\Big(\big(\nabla_{\di \Phi_s} \nabla_{\di\Phi_s}X_s\big)_{\Phi_s}+\big(\nabla_{\nabla\di\Phi_s} X_s\big)_{\Phi_s}\Big)\Big|\\
&\leq \big|\nabla^3 X_s\big|\cdot\big|\di\Phi_s\big|^3+3\big|\nabla^2 X_s\big|\cdot\big|\nabla\di\Phi_s\big|\cdot\big|\di\Phi_s\big|+\big|\nabla X_s\big|\cdot\big|\nabla^2\di\Phi_s\big|.
\end{align*}
We now apply Gronwall's Lemma \cite{Gro19} and indicate with $C>0$ a constant depending on $\mathcal M$ but not on $X$. Below, we can always bring the constant to the exponent because, by definition, $\langle \nabla X\rangle_{C^0}\geq 1$. Thus, we find that
\begin{align*}
\Vert \max_s \di\Phi_s\Vert&\leq e^{C\langle \nabla X\rangle_{C^0}},\\
\Vert \max_s \nabla\di\Phi_s\Vert&\leq \langle \nabla X\rangle_{C^1}\Vert \max_s\di\Phi_s\Vert^2_{C^0}e^{C\langle \nabla X\rangle_{C^0}}\leq \langle \nabla X\rangle_{C^1}e^{C\langle \nabla X\rangle_{C^0}},\\
\Vert \max_s \nabla^2\di\Phi_s\Vert&\leq \Big(\langle\nabla X\rangle_{C^2}\Vert \max_s\di\Phi_s\Vert^3_{C^0}+\langle\nabla X\rangle_{C^1}\Vert \max_s\nabla\di\Phi_s\Vert_{C^0}\Vert \max_s\di\Phi_s\Vert_{C^0}\Big)e^{C\langle \nabla X\rangle_{C^0}}\\
&\leq \big(\langle \nabla X\rangle_{C^2}+\langle \nabla X\rangle_{C^1}^2\big)e^{C\langle \nabla X\rangle_{C^0}}.
\end{align*}
Finally, from the definition of $B_{2,2}$, we get
\begin{align*}
B_{2,2}\big(\Vert\di\Phi_1\Vert\big)&\leq \Big[\sum_{a_1+2a_2+3a_3\leq 4}\langle \nabla X\rangle_{C^1}^{a_2}\big(\langle \nabla X\rangle_{C^2}+\langle \nabla X\rangle_{C^1}^2\big)^{a_3}\Big]e^{C\langle \nabla X\rangle_{C^0}}\\
&\leq \big(\langle \nabla X\rangle_{C^2}+\langle \nabla X\rangle_{C^1}^2\big)e^{C\langle \nabla X\rangle_{C^0}}.\qedhere
\end{align*}
\end{proof}

\newpage

\bibliographystyle{amsalpha}
\bibliography{systolic_bib_2}
\end{document}